%
%
%


\documentclass{amsart}
\usepackage{amssymb}




\newtheorem{theorem}{Theorem}[section]
\newtheorem{lemma}[theorem]{Lemma}

\newtheorem{cor}[theorem]{Corollary}
\theoremstyle{definition}
\newtheorem{definition}[theorem]{Definition}
\newtheorem{example}[theorem]{Example}

\theoremstyle{remark}
\newtheorem{remark}[theorem]{Remark}

\numberwithin{equation}{section}
\begin{document}

\title[summation identities and transformations for hypergeometric series]{summation identities and transformations for hypergeometric series}


\author{Rupam Barman}
\address{Department of Mathematics, Indian Institute of Technology Guwahati, Guwahati-781039, INDIA}
\curraddr{}
\email{rupam@iitg.ernet.in}
\author{Neelam Saikia}
\address{Department of Theoretical Statistics and  Mathematics unit, Indian Statistical Institute, Delhi Centre, New Delhi-110016, INDIA}
\curraddr{}
\email{nlmsaikia1@gmail.com}
\thanks{}

\subjclass[2010]{Primary: 11G20, 33E50; Secondary: 33C99, 11S80,
11T24.}
\date{21st September, 2016}
\keywords{Character of finite fields, Gauss sums, Jacobi sums, Gaussian hypergeometric series, Teichm\"{u}ller character,
$p$-adic Gamma function, $p$-adic hypergeometric series, algebraic curves.}
\begin{abstract} We find summation identities and transformations for the McCarthy's $p$-adic hypergeometric series
by evaluating certain Gauss sums which appear while counting points on the family
$$Z_{\lambda}: x_1^d+x_2^d=d\lambda x_1x_2^{d-1}$$
over a finite field $\mathbb{F}_p$.
A. Salerno expresses the number of points over a finite field $\mathbb{F}_p$ on the family $Z_{\lambda}$
in terms of quotients of $p$-adic gamma function under the condition that $d|p-1$. In this paper, we first express
the number of points over a finite field $\mathbb{F}_p$ on the family $Z_{\lambda}$ in terms of McCarthy's $p$-adic hypergeometric series
for any odd prime $p$ not dividing $d(d-1)$, and then deduce two summation identities for the $p$-adic hypergeometric series. We
also find certain transformations and special values of the $p$-adic hypergeometric series.
We finally find a summation identity for the Greene's finite field hypergeometric series.
\end{abstract}
\maketitle
\section{Introduction and statement of results}
It  is  a  well  known  result  that  the  number  of  points over a finite field on the Legendre family of elliptic curves can be
written in terms of a hypergeometric function modulo $p$. In \cite{salerno}, A. Salerno extends this result to a family of monomial
deformations of a diagonal hypersurface. She finds explicit relationships between the number of points and generalized hypergeometric functions
as well as their finite field analogues. Let $X_{\lambda}$ denote the family of monomial deformations of diagonal hypersurfaces
$$X_{\lambda}: x_1^d+x_2^d+\cdots +x_n^d=d\lambda x_1^{h_1}x_2^{h_2}\cdots x_n^{h_n},$$
where $\sum h_i=d$ and $\gcd(d, h_1, \ldots, h_n)=1$. For $\lambda \in \mathbb{Z}$, let $N_{\mathbb{F}_q}(X_{\lambda})$ denote the number of points
on $X_{\lambda}$ in $\mathbb{P}_{\mathbb{F}_q}^{n-1}$, where $\mathbb{F}_q$ is the finite field of $q=p^e$-elements.
Under the condition that $dh_1\cdots h_n| (q-1)$,
A. Salerno \cite[Thm. 4.1]{salerno} expresses $N_{\mathbb{F}_q}(X_{\lambda})-N_{\mathbb{F}_q}(X_{0})$
as a sum of finite field analogues of hypergeometric functions defined by N. Katz \cite{katz}. She studies the special case Dwork family
$X_{\lambda}^d: x_1^d+x_2^d+\cdots +x_d^d=d\lambda x_1x_2\cdots x_d$ when $d=3, 4$.
In \cite{goodson}, H. Goodson gives an expression for the number of points
on the family of Dwork K3 surfaces $X_{\lambda}^4: x_1^4+x_2^4+x_3^4+x_4^4=4\lambda x_1x_2x_3x_4$ over a finite field $\mathbb{F}_q$
in terms of Greene's finite field hypergeometric functions under the condition that $q\equiv 1 \pmod{4}$.
She further gives an expression for the number of points on the family
$X_{\lambda}^4$ in terms of McCarthy's $p$-adic hypergeometric series ${_n}G_{n}[\cdots]$ (defined in Section 2)
under the condition that $p\not \equiv 1\pmod{4}$. Recently, the authors with H. Rahman \cite{BRS} express the number of $\mathbb{F}_q$-points on $X_{\lambda}^d$ in terms
of McCarthy's $p$-adic hypergeometric series when $d$ is any odd prime such that $p\nmid d$ and $q\not\equiv 1 \pmod{d}$, which gives a solution to
a conjecture of H. Goodson \cite{goodson}.
\par The aim of this paper is to find summation identities and transformations
for the McCarthy's $p$-adic hypergeometric series and Greene's finite field hypergeometric series.
In \cite{BS5}, the authors with D. McCarthy
find eight summation identities for the $p$-adic hypergeometric series by counting points on certain hyperelliptic curves over a finite field. Here we apply
similar technique to the 0-dimensional variety $Z_{\lambda}: x_1^d+x_2^d=d\lambda x_1x_2^{d-1}$ and deduce the summation identities. Under the condition
that $d|p-1$, A. Salerno expresses the number of points over a finite field $\mathbb{F}_p$ on the family $Z_{\lambda}$
in terms of quotients of $p$-adic gamma function (for example, see \cite[Lemma 5.4]{salerno}). In the following theorem, we express
the number of points over a finite field $\mathbb{F}_p$ on the family $Z_{\lambda}$ in terms of McCarthy's $p$-adic hypergeometric series
for any odd prime $p$ not dividing $d(d-1)$.
\begin{theorem}\label{point-count}
Let $p$ be an odd prime such that $p\nmid d(d-1)$. If $\lambda \neq 0$, then the number of $\mathbb{F}_p$-points $N_{\mathbb{F}_p}(Z_{\lambda})$ on the
$0$-dimensional variety $Z_\lambda^d: x_1^d+x_2^d=d\lambda x_1x_2^{d-1}$ is given by
\begin{eqnarray}
N_{\mathbb{F}_p}(Z_{\lambda})=1+{_{d-1}G}_{d-1}\left[\begin{array}{cccc}
                                           \frac{1}{d}, & \frac{2}{d}, & \ldots, & \frac{d-1}{d} \vspace{.1cm}\\
                                           0, & \frac{1}{d-1}, & \ldots, & \frac{d-2}{d-1}
                                         \end{array}|\lambda^d(d-1)^{d-1}\right].\nonumber
\end{eqnarray}
\end{theorem}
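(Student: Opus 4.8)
The plan is to turn the projective count into a single sum of Gauss sums and then match that sum, term by term in the Teichm\"{u}ller parametrization, against the defining sum of McCarthy's ${}_{d-1}G_{d-1}$. First I would dispose of the point at infinity $[1:0]$, which does not satisfy $x_1^d+x_2^d=d\lambda x_1x_2^{d-1}$, so that on the chart $x_2\neq 0$ with $u=x_1/x_2$ the count $N_{\mathbb{F}_p}(Z_\lambda)$ is exactly the number of $u\in\mathbb{F}_p$ with $u^d-d\lambda u+1=0$. Writing this root count additively as $\frac1p\sum_{u}\sum_{t}\theta\bigl(t(u^d-d\lambda u+1)\bigr)$ for a fixed nontrivial additive character $\theta$, expanding each of $\theta(tu^d)$ and $\theta(-td\lambda u)$ through the Fourier expansion $\theta(a)=\frac{1}{p-1}\sum_\chi G(\overline\chi)\chi(a)$ (valid for $a\neq0$), and carrying out the resulting inner sums over $u$ and $t$ by multiplicative orthogonality, I expect the whole count to collapse to
\begin{equation*}
N_{\mathbb{F}_p}(Z_\lambda)=1-\frac1p+\frac{1}{p(p-1)}\sum_{\chi}G(\overline\chi)\,G(\chi^{d})\,G(\chi^{1-d})\,\chi^{-d}(-d\lambda),
\end{equation*}
where $G(\chi)=\sum_{x\in\mathbb{F}_p}\chi(x)\theta(x)$; note that the three characters $\overline\chi,\chi^{d},\chi^{1-d}$ have trivial product, the hallmark of a hypergeometric Gauss-sum pattern.

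The heart of the proof is then to convert the character sum above into ${}_{d-1}G_{d-1}$. I would set $\chi=\overline\omega^{\,j}$ for the Teichm\"{u}ller character $\omega$ and apply the Gross--Koblitz formula to rewrite $G(\overline\chi)$, $G(\chi^{d})$ and $G(\chi^{1-d})$ in terms of the $p$-adic gamma function evaluated at the fractional parts of $j/(p-1)$, $dj/(p-1)$ and $(1-d)j/(p-1)$, together with powers of $-p$ (which absorb the prefactor $1/p$ and supply the floor-function exponents appearing in the definition of ${}_{d-1}G_{d-1}$). To produce the parameter lists $\frac1d,\dots,\frac{d-1}{d}$ and $0,\frac{1}{d-1},\dots,\frac{d-2}{d-1}$ I would then break $G(\chi^{d})$ and $G(\chi^{1-d})$ into their constituent factors. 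Since the hypothesis is only $p\nmid d(d-1)$ and not $d\mid p-1$, there need be no character of order $d$ or $d-1$ over $\mathbb{F}_p$, so the classical Hasse--Davenport product relation is unavailable; instead I would use the $p$-adic Gauss--Legendre multiplication formula for $\Gamma_p$, which is a purely $p$-adic analytic identity and is legitimate precisely because $d$ and $d-1$ are units modulo $p$. The constants $d^{\pm d}$ and $(d-1)^{d-1}$ generated by the two multiplication formulas combine with $\chi^{-d}(-d\lambda)$ so that, after the powers of $d$ cancel, the surviving character of the argument is $\overline\omega^{\,j}\bigl(\lambda^{d}(d-1)^{d-1}\bigr)$, explaining the value $\lambda^{d}(d-1)^{d-1}$ in the statement.

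The main obstacle, and the reason the divisibility-free hypothesis is genuinely harder than Salerno's $d\mid p-1$ setting, is the bookkeeping at the degenerate characters. Because $d$ (respectively $d-1$) may share a common factor with $p-1$, the character $\chi^{d}$ (respectively $\chi^{1-d}$) can be trivial for certain $j\neq0$, so that the relevant Gauss sum degenerates to $-1$ and the generic Gross--Koblitz/multiplication identities must be applied with care; these degenerate terms, together with the elementary $-\tfrac1p$ and the $\chi=\varepsilon$ contribution, are exactly what must be reconciled to land on McCarthy's normalization $\frac{-1}{p-1}\sum_{j}(-1)^{j(d-1)}\overline\omega^{\,j}(\cdots)$. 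Once every fractional part, power of $-p$, and Teichm\"{u}ller prefactor is matched against the definition of ${}_{d-1}G_{d-1}$, the remaining two terms of the displayed formula assemble into ${}_{d-1}G_{d-1}\bigl[\,\cdots\mid\lambda^{d}(d-1)^{d-1}\bigr]$, giving $N_{\mathbb{F}_p}(Z_\lambda)=1+{}_{d-1}G_{d-1}[\cdots]$ as claimed.
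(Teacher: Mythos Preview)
Your plan is correct and essentially identical to the paper's: the paper also arrives at the triple Gauss-sum $\sum_{l} g(T^{-l})g(T^{-(d-1)l})g(T^{dl})T^{-dl}(-d\lambda)$ (via the homogeneous count in $\mathbb{F}_p^2$ rather than your affine chart, but these yield the same sum), then applies Gross--Koblitz and the $p$-adic multiplication formula for $\Gamma_p$ to unfold into ${}_{d-1}G_{d-1}$. One calibration worth noting: your ``main obstacle'' of degenerate characters at $j\neq 0$ is not actually an obstacle, since both Gross--Koblitz $g(\overline\omega^{a})=-\pi^{(p-1)\langle a/(p-1)\rangle}\Gamma_p(\langle a/(p-1)\rangle)$ and the $\Gamma_p$-multiplication formula hold uniformly in the index (only $p\nmid d(d-1)$ is needed), so the sole term the paper treats separately is $j=0$; by contrast, the step you pass over as routine bookkeeping---matching the $\pi$-exponent $\tfrac{l}{p-1}+\langle\tfrac{(d-1)l}{p-1}\rangle+\langle\tfrac{-dl}{p-1}\rangle$ coming from Gross--Koblitz against the floor exponents $-\sum_{h=1}^{d-1}\lfloor\tfrac{h}{d}-\tfrac{l}{p-1}\rfloor-\sum_{h=1}^{d-2}\lfloor\tfrac{h}{d-1}+\tfrac{l}{p-1}\rfloor$ in McCarthy's definition---is where the actual work sits, and the paper devotes a separate lemma with a case analysis to it.
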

We evaluate certain Gauss sums which appear while counting points on $Z_{\lambda}$ over $\mathbb{F}_p$ and deduce the following two summation identities.
Let $\phi$ denote the quadratic character on $\mathbb{F}_p$.
\begin{theorem}\label{MT1}
Let $d\geq3$ be odd and $p$ an odd prime such that $p\nmid d(d-1)$. For $x\in\mathbb{F}_p^{\times}$ we have
\begin{eqnarray}
&&\hspace{-.4cm}\sum_{t \in \mathbb{F}_p}\phi(t(t-1))\times\nonumber\\
&&\hspace{-.4cm}{_{d-1}G}_{d-1}\left[\begin{array}{cccccccccc}
                       \frac{1}{d}, \hspace{-.2cm} & \frac{2}{d}, \hspace{-.2cm} & \ldots, \hspace{-.2cm} & \frac{\frac{d-1}{2}-1}{d},
                       \hspace{-.2cm}& \frac{\frac{d-1}{2}}{d}, \hspace{-.2cm}
 & \frac{\frac{d-1}{2}+1}{d}, \hspace{-.2cm} & \ldots, \hspace{-.2cm} & \frac{d-3}{d}, \hspace{-.2cm} & \frac{d-2}{d}, \hspace{-.2cm} & \frac{d-1}{d}
 \vspace{.1cm}\\
 \frac{1}{d-1}, \hspace{-.2cm} & \frac{2}{d-1}, & \hspace{-.2cm}\ldots, \hspace{-.2cm} & \frac{\frac{d-1}{2}-1}{d-1},
 \hspace{-.2cm} & \frac{\frac{d-1}{2}+1}{d-1}, \hspace{-.2cm} & \frac{\frac{d-1}{2}+2}{d-1}, & \hspace{-.2cm} \ldots, \hspace{-.2cm} & \frac{d-2}{d-1}, \hspace{-.2cm}
 & 0, \hspace{-.2cm} &0
                     \end{array}\hspace{-.1cm}|xt\right]\nonumber\\
&&\hspace{-.4cm}=-1-
p\cdot {_{d-1}G}_{d-1}\left[\begin{array}{cccc}
                                           \frac{1}{d}, & \frac{2}{d}, & \ldots, & \frac{d-1}{d} \vspace{.1cm}\\
                                           0, & \frac{1}{d-1}, & \ldots, & \frac{d-2}{d-1}
                                         \end{array}|x
\right].\nonumber
\end{eqnarray}
\end{theorem}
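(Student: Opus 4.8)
The plan is to unfold the inner series through McCarthy's definition so that the whole left-hand side becomes a finite sum over a single character index, and then to carry out explicitly the one remaining sum over $t$. Write $\omega$ for the Teichm\"{u}ller character of $\mathbb{F}_p^{\times}$ and recall $\phi=\omega^{(p-1)/2}$. Using the definition of ${}_{d-1}G_{d-1}$ together with the Gross--Koblitz formula, I would first rewrite the inner term as
\[
{}_{d-1}G_{d-1}[\cdots\,|\,xt]=\frac{-1}{p-1}\sum_{j=0}^{p-2}C_j^{\mathrm{mod}}\,\overline{\omega}^{\,j}(x)\,\overline{\omega}^{\,j}(t),
\]
where $C_j^{\mathrm{mod}}$ is an explicit product of Gauss sums (equivalently, of $p$-adic Gamma quotients) attached to the top data $\{k/d\}$ and to the \emph{modified} bottom data. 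The point of this expansion is that the entire dependence on $t$ is isolated in the single factor $\overline{\omega}^{\,j}(t)$, so after interchanging the two finite sums the left-hand side becomes $\frac{-1}{p-1}\sum_j C_j^{\mathrm{mod}}\,\overline{\omega}^{\,j}(x)\,S_j$, with $S_j:=\sum_{t\in\mathbb{F}_p}\phi(t(t-1))\,\overline{\omega}^{\,j}(t)$.

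The second step is to evaluate $S_j$. The summands at $t=0$ and $t=1$ vanish, and for $t\neq 0$ I would write $\phi(t(t-1))\overline{\omega}^{\,j}(t)=(\phi\,\overline{\omega}^{\,j})(t)\,\phi(t-1)$ and use $\phi(t-1)=\phi(-1)\phi(1-t)$ to recognise a Jacobi sum:
\[
S_j=\phi(-1)\,J(\phi\,\overline{\omega}^{\,j},\phi)=\phi(-1)\,\frac{g(\phi\,\overline{\omega}^{\,j})\,g(\phi)}{g(\overline{\omega}^{\,j})}\qquad(j\neq 0,\tfrac{p-1}{2}).
\]
The two excluded indices are handled by hand: at $j=0$ one gets the elementary sum $S_0=\sum_t\phi(t(t-1))=-1$ (the discriminant of $t^2-t$ being nonzero), and at $j=\tfrac{p-1}{2}$, where $\overline{\omega}^{\,j}=\phi$, one gets $S_{(p-1)/2}=\sum_{t\neq 0}\phi(t-1)=-\phi(-1)$.

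The third step is the cancellation that produces the clean coefficient $-p$. The modified bottom data differ from the bottom data of the series in Theorem~\ref{point-count} only by deleting the parameter $\tfrac{(d-1)/2}{d-1}=\tfrac12$ and inserting an extra $0$; through Gross--Koblitz this multiplies the $j$-th coefficient by a reciprocal Gauss-sum ratio,
\[
C_j^{\mathrm{mod}}=\kappa\,\frac{g(\overline{\omega}^{\,j})}{g(\phi\,\overline{\omega}^{\,j})}\,C_j^{\mathrm{std}},
\]
where $C_j^{\mathrm{std}}$ is the coefficient of the unmodified series and $\kappa$ is independent of $j$. Multiplying by $S_j$, the Gauss-sum ratio cancels against its reciprocal, so each generic term collapses to $\kappa\,\phi(-1)g(\phi)\,C_j^{\mathrm{std}}\,\overline{\omega}^{\,j}(x)$; using $g(\phi)^2=\phi(-1)p$ one finds $\kappa\,\phi(-1)g(\phi)=-p$, and summing over $j$ reconstitutes exactly $-p\cdot{}_{d-1}G_{d-1}[\cdots\,|\,x]$. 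The two degenerate indices from Step~2 then supply the additive constant: the $j=\tfrac{p-1}{2}$ term turns out to match its generic prediction and contributes nothing extra, while the $j=0$ term (with $S_0=-1$) furnishes the $-1$.

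The main obstacle is the constant-chasing in Step~3: keeping exact track of the powers of the uniformiser $\pi$ (with $\pi^{p-1}=-p$), of $\Gamma_p(\tfrac12)$, and of the floor-function signs $(-p)^{-\lfloor-\langle b\rangle\rfloor}$ accompanying the bottom parameters $b=0$ and $b=\tfrac12$. In particular the argument $\langle\tfrac{j}{p-1}-\tfrac12\rangle$ wraps around as $j$ crosses $\tfrac{p-1}{2}$, so the Gross--Koblitz translation of $g(\phi\,\overline{\omega}^{\,j})$ carries an index shift that threatens to introduce a factor $\pi^{\pm(p-1)}=\mp p$; verifying that all such contributions assemble into the single constant $\kappa=-g(\phi)$ (and not a $j$-dependent quantity), and confirming that the degenerate terms at $j=0,\tfrac{p-1}{2}$ combine to exactly $-1$ rather than an $x$-dependent remainder, is the crux of the argument. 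By contrast, the Jacobi-sum evaluation and the vanishing of the $t=0,1$ contributions are routine.
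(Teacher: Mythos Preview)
Your proposal is correct and follows essentially the same route as the paper. The paper organises the computation through an auxiliary sum $A_x$ (obtained from the Gauss-sum expression arising in the point count on $Z_\lambda$), evaluates it once as $p-1+p(p-1)\cdot{}_{d-1}G_{d-1}[\cdots\mid x]$ via the proof of Theorem~\ref{point-count}, and a second time by extracting the factor corresponding to $b=\tfrac12$ from the bottom product and applying Lemma~\ref{lemma3}; that lemma is exactly your Jacobi-sum evaluation $S_j=\phi(-1)J(\phi\overline\omega^{\,j},\phi)$ repackaged on the $\Gamma_p$ side, and the identity $\Gamma_p(\tfrac12)\pi^{(p-1)/2}=-g(\phi)$ is precisely what pins down your constant $\kappa=-g(\phi)$ (so $\kappa\,\phi(-1)g(\phi)=-p$). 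The degenerate indices behave just as you predict: the paper handles $j=0$ by separating the $l=0$ term of $A_x$ and noting $\sum_t\phi(t(t-1))=-1$, while $j=(p-1)/2$ needs no special treatment because the generic Jacobi-sum formula already returns the correct value there.
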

\begin{theorem}\label{MT2}
Let $d>2$ be even and $p$ an odd prime such that $p\nmid d(d-1)$. For $x\in\mathbb{F}_p^{\times}$ we have
\begin{eqnarray}
&&\hspace{-.4cm}\sum_{t \in \mathbb{F}_p}\phi(1-t){_{d-2}G}_{d-2}\left[\begin{array}{ccccccccc}
\frac{1}{d}, \hspace{-.2cm} & \frac{2}{d}, \hspace{-.2cm} & \ldots, \hspace{-.2cm} & \frac{\frac{d}{2}-1}{d}, \hspace{-.2cm}
& \frac{\frac{d}{2}+1}{d}, \hspace{-.2cm} & \frac{\frac{d}{2}+2}{d}, \hspace{-.2cm}
& \ldots, \hspace{-.2cm} & \frac{d-2}{d}, & \frac{d-1}{d}
\vspace{.1cm}\\
\frac{1}{d-1}, \hspace{-.2cm} & \frac{2}{d-1}, \hspace{-.2cm} &\ldots, \hspace{-.2cm} &\frac{\frac{d}{2}-1}{d-1}, \hspace{-.2cm} & \frac{\frac{d}{2}}{d-1},
\hspace{-.2cm} & \frac{\frac{d}{2}+1}{d-1}, \hspace{-.2cm} &\ldots, \hspace{-.2cm} & \frac{d-3}{d-1}, \hspace{-.2cm} & \frac{d-2}{d-1}
\end{array}|xt\right]\nonumber\\
&&=-{_{d-1}G}_{d-1}\left[\begin{array}{cccc}
                                           \frac{1}{d}, & \frac{2}{d}, & \ldots, & \frac{d-1}{d} \vspace{.1cm}\\
                                           0, & \frac{1}{d-1}, & \ldots, & \frac{d-2}{d-1}
                                         \end{array}|x\right].\nonumber
\end{eqnarray}
\end{theorem}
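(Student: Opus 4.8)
The plan is to prove Theorem \ref{MT2} by expanding the left-hand side through the definition of McCarthy's $_{d-2}G_{d-2}$, carrying out the sum over $t$ as a Jacobi sum, and then recombining the resulting Gauss sums into the single $_{d-1}G_{d-1}$ on the right. Write $\omega$ for the Teichm\"{u}ller character and recall that, by its definition (Section 2) together with the Gross--Koblitz formula, each $_nG_n[\cdots\,|\,y]$ is a normalized sum $\frac{-1}{p-1}\sum_{j=0}^{p-2}\overline{\omega}^{\,j}(y)\,P_j$, where $P_j$ is a product of ratios of $p$-adic Gamma values, equivalently of Gauss sums $g(\overline{\omega}^{\,j}\omega^{c})$, indexed by the upper and lower parameters. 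Since $\overline{\omega}^{\,j}(xt)=\overline{\omega}^{\,j}(x)\overline{\omega}^{\,j}(t)$, I would substitute the definition of the inner $_{d-2}G_{d-2}$ and interchange the finite sums to get
\[
\text{LHS}=\frac{-1}{p-1}\sum_{j=0}^{p-2}\overline{\omega}^{\,j}(x)\,P_j^{(d-2)}\sum_{t\in\mathbb{F}_p}\phi(1-t)\,\overline{\omega}^{\,j}(t),
\]
where $P_j^{(d-2)}$ is the product attached to the $(d-2)$ upper and $(d-2)$ lower parameters displayed in the theorem.

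The key observation is that the inner sum is exactly the Jacobi sum $J(\overline{\omega}^{\,j},\phi)=\sum_{t}\overline{\omega}^{\,j}(t)\phi(1-t)$, so the whole identity reduces to a fact about Gauss sums. For every index $j$ with $\overline{\omega}^{\,j}$ and $\overline{\omega}^{\,j}\phi$ both nontrivial one has $J(\overline{\omega}^{\,j},\phi)=g(\overline{\omega}^{\,j})g(\phi)/g(\overline{\omega}^{\,j}\phi)$, and I would substitute this and apply Gross--Koblitz. Because $d$ is even, $\tfrac12=\tfrac{d/2}{d}$ is precisely the upper parameter missing from the inner series and $0$ is the missing lower parameter; I expect the Gross--Koblitz images of the factors attached to these two new parameters to multiply, up to sign, to exactly $g(\overline{\omega}^{\,j})g(\phi)/g(\overline{\omega}^{\,j}\phi)$, so that inserting the Jacobi sum reconstructs the full product. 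Thus I expect to verify, term by term in $j$, the clean relation
\[
P_j^{(d-2)}\,\frac{g(\overline{\omega}^{\,j})\,g(\phi)}{g(\overline{\omega}^{\,j}\phi)}=-\,P_j^{(d-1)},
\]
so that the $j$-th term becomes exactly the $j$-th term of $-\,{_{d-1}G}_{d-1}[\cdots\,|\,x]$; since this holds for each $j$, the identity follows for all $x\in\mathbb{F}_p^{\times}$. The hypotheses $p$ odd and $p\nmid d(d-1)$ are what guarantee that the characters indexing these Gauss sums are the intended ones and that the relevant $\Gamma_p$-values are nonzero, so the matching is valid.

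The main obstacle, and the part demanding the most care, is this Gauss-sum bookkeeping together with the two degenerate indices $j=0$ and $j=\tfrac{p-1}{2}$, where $\overline{\omega}^{\,j}$ equals the trivial character $\varepsilon$ and $\phi$ respectively. At those indices the factorization $J=gg/g$ fails, and I would instead use the direct evaluations $J(\varepsilon,\phi)=\sum_{t\neq0}\phi(1-t)=-1$ and $J(\phi,\phi)=-\phi(-1)$, checking that the contributions they produce coincide with the corresponding terms of $-\,{_{d-1}G}_{d-1}[\cdots\,|\,x]$; it is precisely the harmlessness of these boundary terms for even $d$ that should yield the clean right-hand side here, in contrast to the extra $-1-p\,(\cdots)$ appearing in Theorem \ref{MT1}. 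Equally delicate is tracking the powers of $-p$ coming from the floor terms in the definition of $_nG_n$ and the $\pi$-powers in Gross--Koblitz, along with the overall sign; I would organize this by reducing everything to quotients of $\Gamma_p$ and invoking the reflection formula $\Gamma_p(z)\Gamma_p(1-z)=\pm1$ to collapse the $g(\phi)$ and $g(\overline{\omega}^{\,j})/g(\overline{\omega}^{\,j}\phi)$ factors onto the two missing parameters, completing the proof.
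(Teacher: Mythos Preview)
Your approach is correct and rests on the same key identity the paper uses, but you take a more direct route. The paper introduces an auxiliary sum $A_x$ (the character sum from the proof of Theorem \ref{point-count}, specialized to argument $x$) and evaluates it two ways: one evaluation recycles the proof of Theorem \ref{point-count} to obtain $p-1+p(p-1)\,{_{d-1}G_{d-1}}[\cdots\mid x]$; the other isolates the factor $\Gamma_p(\langle\tfrac{l}{p-1}\rangle)\Gamma_p(\langle\tfrac12-\tfrac{l}{p-1}\rangle)/\Gamma_p(\tfrac12)$ and applies Lemma \ref{lemma7} to convert it into $-\sum_t\omega^l(-t)\phi(t(t-1))$, producing (after the change of variable $t\mapsto 1/t$) the $\phi(1-t)$-weighted sum of $_{d-2}G_{d-2}$'s. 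Equating the two evaluations gives the theorem.

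Your proposal is this computation run in reverse, stripped of the point-counting scaffolding: you expand the left-hand side, collapse the $t$-sum into $J(\overline{\omega}^{\,j},\phi)$, and recognize this Jacobi sum (via Gross--Koblitz) as the Gamma-factor attached to the two parameters $\tfrac12$ and $0$ that are missing from the inner $G$-function. The termwise relation you aim for is exactly Lemma \ref{lemma7} combined with the parity factor $(-1)^j$ coming from the shift $n=d-2\to n=d-1$, and it in fact holds uniformly for all $0\le j\le p-2$, so your separate treatment of $j=0$ and $j=\tfrac{p-1}{2}$ is unnecessary (though harmless). Your route is shorter and does not need Lemma \ref{lemma5} or the $A_x$ detour; the paper's route has the virtue of making the identity emerge naturally from the point-counting formula, which is its conceptual source.
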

Using the summation identities, we obtain the following two point count formulas for $Z_{\lambda}$.
\begin{cor}
Let $d>2$ be even and $p$ an odd prime such that $p\nmid d(d-1)$. Then
\begin{eqnarray}
 N_{\mathbb{F}_p}(Z_{\lambda})&=1&-\sum_{t \in \mathbb{F}_p}\phi(1-t)\times \nonumber\\
 &&{_{d-2}G}_{d-2}\left[\begin{array}{ccccccccc}
\frac{1}{d}, \hspace{-.2cm} & \frac{2}{d}, \hspace{-.2cm} & \ldots, \hspace{-.2cm} & \frac{\frac{d}{2}-1}{d}, \hspace{-.2cm}
& \frac{\frac{d}{2}+1}{d}, \hspace{-.2cm} & \frac{\frac{d}{2}+2}{d}, \hspace{-.2cm}
& \ldots, \hspace{-.2cm} & \frac{d-2}{d}, & \frac{d-1}{d}
\vspace{.1cm}\\
\frac{1}{d-1}, \hspace{-.2cm} & \frac{2}{d-1}, \hspace{-.2cm} &\ldots, \hspace{-.2cm} &\frac{\frac{d}{2}-1}{d-1}, \hspace{-.2cm} & \frac{\frac{d}{2}}{d-1},
\hspace{-.2cm} & \frac{\frac{d}{2}+1}{d-1}, \hspace{-.2cm} &\ldots, \hspace{-.2cm} & \frac{d-3}{d-1}, \hspace{-.2cm} & \frac{d-2}{d-1}
\end{array}|\alpha t\right],\nonumber
\end{eqnarray}
where $\alpha=\lambda^d(d-1)^{d-1}$.
\end{cor}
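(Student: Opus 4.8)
The plan is to read off this corollary directly from Theorem \ref{point-count} and Theorem \ref{MT2}; the argument amounts to a single substitution, with no new computation. Set $\alpha=\lambda^d(d-1)^{d-1}$. For $\lambda\neq 0$, Theorem \ref{point-count} already expresses the point count as
$$N_{\mathbb{F}_p}(Z_{\lambda})=1+{_{d-1}G}_{d-1}\left[\begin{array}{cccc}
\frac{1}{d}, & \frac{2}{d}, & \ldots, & \frac{d-1}{d}\\
0, & \frac{1}{d-1}, & \ldots, & \frac{d-2}{d-1}
\end{array}\Big|\alpha\right].$$

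The crucial observation is that the ${_{d-1}G}_{d-1}$ term appearing here is \emph{verbatim} the right-hand side of Theorem \ref{MT2}, once the free variable in that theorem is specialized to $x=\alpha$. Since $d>2$ is even and $p\nmid d(d-1)$, the hypotheses of Theorem \ref{MT2} hold for the choice $x=\alpha\in\mathbb{F}_p^{\times}$, and the theorem supplies
$${_{d-1}G}_{d-1}\left[\begin{array}{cccc}
\frac{1}{d}, & \frac{2}{d}, & \ldots, & \frac{d-1}{d}\\
0, & \frac{1}{d-1}, & \ldots, & \frac{d-2}{d-1}
\end{array}\Big|\alpha\right]=-\sum_{t\in\mathbb{F}_p}\phi(1-t)\,{_{d-2}G}_{d-2}\left[\cdots\Big|\alpha t\right],$$
where ${_{d-2}G}_{d-2}[\cdots|\alpha t]$ abbreviates the series displayed in Theorem \ref{MT2}. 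Substituting this identity into the expression from Theorem \ref{point-count} collapses the two results into the asserted formula.

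Because the corollary is a formal consequence of two already-established statements, I do not anticipate any genuine obstacle. The only point that requires care is the bookkeeping: one must verify that the numerator parameters $\frac{1}{d},\dots,\frac{d-1}{d}$ and denominator parameters $0,\frac{1}{d-1},\dots,\frac{d-2}{d-1}$ of the ${_{d-1}G}_{d-1}$ in Theorem \ref{point-count} agree exactly with those on the right-hand side of Theorem \ref{MT2}, and that the argument $\alpha=\lambda^d(d-1)^{d-1}$ is the same quantity in both statements, so that setting $x=\alpha$ is legitimate. Both conditions hold by construction, so the two identities chain together with no leftover terms and the corollary follows immediately.
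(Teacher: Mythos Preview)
Your proposal is correct and follows exactly the route the paper intends: the corollary is stated immediately after Theorems \ref{point-count} and \ref{MT2} with the remark that the point-count formulas are obtained ``using the summation identities,'' and your argument simply specializes $x=\alpha$ in Theorem \ref{MT2} and substitutes into Theorem \ref{point-count}, which is the entire content of the deduction.
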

\begin{cor}
Let $d\geq3$ be odd and $p$ an odd prime such that $p\nmid d(d-1)$. Then
\begin{eqnarray}
&&pN_{\mathbb{F}_p}(Z_{\lambda})=p-1-\sum_{t \in \mathbb{F}_p}\phi(t(t-1))\times \nonumber\\
&&{_{d-1}G}_{d-1}\left[\begin{array}{cccccccccc}
                       \frac{1}{d}, \hspace{-.2cm} & \frac{2}{d}, \hspace{-.2cm} & \ldots, \hspace{-.2cm} & \frac{\frac{d-1}{2}-1}{d},
                       \hspace{-.2cm}& \frac{\frac{d-1}{2}}{d}, \hspace{-.2cm}
 & \frac{\frac{d-1}{2}+1}{d}, \hspace{-.2cm} & \ldots, \hspace{-.2cm} & \frac{d-3}{d}, \hspace{-.2cm} & \frac{d-2}{d}, \hspace{-.2cm} & \frac{d-1}{d}
 \vspace{.1cm}\\
 \frac{1}{d-1}, \hspace{-.2cm} & \frac{2}{d-1}, & \hspace{-.2cm}\ldots, \hspace{-.2cm} & \frac{\frac{d-1}{2}-1}{d-1},
 \hspace{-.2cm} & \frac{\frac{d-1}{2}+1}{d-1}, \hspace{-.2cm} & \frac{\frac{d-1}{2}+2}{d-1}, & \hspace{-.2cm} \ldots, \hspace{-.2cm} & \frac{d-2}{d-1}, \hspace{-.2cm}
 & 0, \hspace{-.2cm} &0
                     \end{array}\hspace{-.1cm}|\alpha t
\right],\nonumber
\end{eqnarray}
where $\alpha=\lambda^d(d-1)^{d-1}$. Hence,
\begin{eqnarray}
&&\sum_{t \in \mathbb{F}_p}\phi(t(t-1))\times \nonumber\\
&&{_{d-1}G}_{d-1}\left[\begin{array}{cccccccccc}
                       \frac{1}{d}, \hspace{-.2cm} & \frac{2}{d}, \hspace{-.2cm} & \ldots, \hspace{-.2cm} & \frac{\frac{d-1}{2}-1}{d},
                       \hspace{-.2cm}& \frac{\frac{d-1}{2}}{d}, \hspace{-.2cm}
 & \frac{\frac{d-1}{2}+1}{d}, \hspace{-.2cm} & \ldots, \hspace{-.2cm} & \frac{d-3}{d}, \hspace{-.2cm} & \frac{d-2}{d}, \hspace{-.2cm} & \frac{d-1}{d}
 \vspace{.1cm}\\
 \frac{1}{d-1}, \hspace{-.2cm} & \frac{2}{d-1}, & \hspace{-.2cm}\ldots, \hspace{-.2cm} & \frac{\frac{d-1}{2}-1}{d-1},
 \hspace{-.2cm} & \frac{\frac{d-1}{2}+1}{d-1}, \hspace{-.2cm} & \frac{\frac{d-1}{2}+2}{d-1}, & \hspace{-.2cm} \ldots, \hspace{-.2cm} & \frac{d-2}{d-1}, \hspace{-.2cm}
 & 0, \hspace{-.2cm} &0
                     \end{array}\hspace{-.1cm}|\alpha t
\right]\nonumber\\
&& \equiv p-1 \pmod{p}.\nonumber
\end{eqnarray}
\end{cor}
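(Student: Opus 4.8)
The plan is to obtain both displayed statements as a purely formal consequence of Theorem~\ref{point-count} and the summation identity of Theorem~\ref{MT1}: the two results share one and the same $_{d-1}G_{d-1}$ evaluation at the argument $\alpha=\lambda^d(d-1)^{d-1}$, and eliminating that common term between them produces the corollary. I would assume $\lambda\neq0$, as in the hypotheses of those two theorems. Since $p\nmid(d-1)$, the element $\alpha=\lambda^d(d-1)^{d-1}$ is then a unit in $\mathbb{F}_p$, so that Theorem~\ref{MT1} may be applied with the specialization $x=\alpha\in\mathbb{F}_p^{\times}$.

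First I would write $G$ for the evaluation
$${_{d-1}G}_{d-1}\left[\begin{array}{cccc} \frac{1}{d}, & \frac{2}{d}, & \ldots, & \frac{d-1}{d} \\ 0, & \frac{1}{d-1}, & \ldots, & \frac{d-2}{d-1} \end{array}|\alpha\right]$$
appearing on the right-hand side of Theorem~\ref{MT1}, and write $S$ for the character sum $\sum_{t\in\mathbb{F}_p}\phi(t(t-1))\,{_{d-1}G}_{d-1}[\cdots|\alpha t]$ on its left-hand side. Setting $x=\alpha$ in Theorem~\ref{MT1} gives $S=-1-pG$. On the other hand, Theorem~\ref{point-count} with this same $\alpha$ states $N_{\mathbb{F}_p}(Z_\lambda)=1+G$, whence $G=N_{\mathbb{F}_p}(Z_\lambda)-1$. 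Substituting this into the previous relation yields
$$S=-1-p\bigl(N_{\mathbb{F}_p}(Z_\lambda)-1\bigr)=p-1-pN_{\mathbb{F}_p}(Z_\lambda),$$
and rearranging gives the first displayed identity $pN_{\mathbb{F}_p}(Z_\lambda)=p-1-S$.

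For the congruence I would then reduce the identity $S=p-1-pN_{\mathbb{F}_p}(Z_\lambda)$ modulo $p$. Because $N_{\mathbb{F}_p}(Z_\lambda)$ is a rational integer, being a point count, the term $pN_{\mathbb{F}_p}(Z_\lambda)$ vanishes modulo $p$, leaving $S\equiv p-1\pmod p$, which is the second displayed assertion.

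There is no genuine obstacle here beyond this elimination step; the two points that deserve a word of care are the verification that $\alpha$ is a unit in $\mathbb{F}_p$, which is where the hypothesis $p\nmid(d-1)$ enters and which legitimizes the use of Theorem~\ref{MT1}, and the observation that although $S$ is a priori a $p$-adic quantity assembled from values of McCarthy's $p$-adic hypergeometric series, the identity $S=p-1-pN_{\mathbb{F}_p}(Z_\lambda)$ forces $S$ to be an ordinary rational integer, so that the final congruence modulo $p$ is meaningful.
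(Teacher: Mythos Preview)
Your proposal is correct and matches the paper's intended derivation: the paper does not give a separate proof of this corollary but simply states that it follows from combining the point-count formula of Theorem~\ref{point-count} with the summation identity of Theorem~\ref{MT1}, which is exactly the elimination you carry out. Your side remarks on the nonvanishing of $\alpha$ and on $S$ being an honest integer are apt and fill in details the paper leaves implicit.
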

In the following example, we take some values of $d$ to show how our results are applied to particular cases.
\begin{example}
 We put $d=5$ and $d=4$ in Theorem \ref{MT1} and Theorem \ref{MT2}, respectively. Then, for $x\in\mathbb{F}_p^{\times}$,
 we have the following summation identities.
 \begin{eqnarray}
\sum_{t \in \mathbb{F}_p}\phi(t(t-1)){_{4}G}_{4}\left[\begin{array}{cccc}
\frac{1}{5}, & \frac{2}{5}, & \frac{3}{5}, & \frac{4}{5}
\vspace{.1cm} \\
\frac{1}{4}, & \frac{3}{4}, & 0, & 0
\end{array}|xt\right] &=&-1-p\cdot {_{4}G}_{4}\left[\begin{array}{cccc}
\frac{1}{5}, & \frac{2}{5}, & \frac{3}{5}, & \frac{4}{5}
\vspace{.1cm}\\
0, & \frac{1}{4}, & \frac{1}{2}, & \frac{3}{4}
\end{array}|x\right],\nonumber
\\
\sum_{t \in \mathbb{F}_p}\phi(1-t){_{2}G}_{2}\left[\begin{array}{cc}
\frac{1}{4}, & \frac{3}{4}
\vspace{.1cm}\\
\frac{1}{3}, &\frac{2}{3}
\end{array}|xt\right]&=&-{_{3}G}_{3}\left[\begin{array}{ccc}
\frac{1}{4}, & \frac{1}{2}, & \frac{3}{4}
\vspace{.1cm} \\
0, & \frac{1}{3}, & \frac{2}{3}
\end{array}|x\right].\nonumber
\end{eqnarray}
The first identity is valid for $p=3$ and all $p>5$; whereas the second identity is valid for all prime $p>3$.
\end{example}
In \cite{Fuselier-McCarthy}, J. Fuselier and D. McCarthy stablish certain transformations and identities for the $G$-function, 
and use them to prove a supercongruence conjecture of Rodriguez-Villegas between a truncated ${_4}F_3$ classical hypergeometric series
and the $p$-th Fourier coefficients of a weight four modular form, modulo $p^3$. Here, we prove that the $G$-function satisfies the following transformations. 
\begin{theorem}\label{MT-6}
 Let $d\geq 2$ and $p$ an odd prime such that $p\nmid d(d-1)$. For $\lambda\in\mathbb{F}_p^{\times}$ we have
 \begin{eqnarray}
 &&{_{d-1}G}_{d-1}\left[\begin{array}{cccc}
                                           \frac{1}{d}, & \frac{2}{d}, & \ldots, & \frac{d-1}{d} \vspace{.1cm}\\
                                           0, & \frac{1}{d-1}, & \ldots, & \frac{d-2}{d-1}
                                         \end{array}|\lambda
\right]\nonumber\\
&&=\left\{
                                  \begin{array}{ll}
                                    \phi(-\lambda(d-1))\\ \times{_{d-1}}G_{d-1}\left[\begin{array}{ccccccc}
                       \frac{1}{2(d-1)},\hspace{-.2cm} & \frac{3}{2(d-1)}, \hspace{-.2cm}& \ldots, \hspace{-.2cm}& \frac{d-1}{2(d-1)},
                       \hspace{-.2cm}& \frac{d+1}{2(d-1)}, \hspace{-.2cm}& \ldots, \hspace{-.2cm} & \frac{2(d-1)-1}{2(d-1)}\\
                       0, \hspace{-.2cm}& \frac{1}{d}, \hspace{-.2cm}& \ldots, \hspace{-.2cm}& \frac{\frac{d}{2}-1}{d}, \hspace{-.2cm}
                       & \frac{\frac{d}{2}+1}{d}, \hspace{-.2cm}& \ldots, \hspace{-.2cm} & \frac{d-1}{d} \end{array}|\frac{1}{\lambda}
\right]\\
\hfill \hbox{if~ $d$ is even;} \\
                                    \phi(d\lambda){_{d-1}}G_{d-1}\left[\begin{array}{cccccccc}
                      0, \hspace{-.2cm}& \frac{1}{d-1}, \hspace{-.2cm}& \ldots, \hspace{-.2cm}& \frac{d-3}{2(d-1)}, \hspace{-.2cm}& \frac{d-1}{2(d-1)},
                      \hspace{-.2cm}&\ldots, \hspace{-.2cm}& \frac{d-3}{d-1}, \hspace{-.2cm}& \frac{d-2}{d-1} \\
                      \frac{1}{2d}, \hspace{-.2cm}& \frac{3}{2d}, \hspace{-.2cm}& \ldots, \hspace{-.2cm}& \frac{d-2}{2d}, \hspace{-.2cm}&
                      \frac{d+2}{2d}, \hspace{-.2cm}& \ldots, \hspace{-.2cm}& \frac{2d-3}{2d}, \hspace{-.2cm}& \frac{2d-1}{2d}
                    \end{array}
|\frac{1}{\lambda}
\right]\\
\hfill \hbox{if ~~$d\geq 3$ is odd,}
                                  \end{array}
                                \right.\nonumber
 \end{eqnarray}
\end{theorem}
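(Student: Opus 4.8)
The plan is to work directly with McCarthy's definition of the $G$-function in terms of the $p$-adic gamma function $\Gamma_p$ and the Teichm\"uller character $\omega$, and to produce the inversion $\lambda\mapsto 1/\lambda$ together with the doubling of one family of parameters by combining two $p$-adic gamma identities: the reflection formula $\Gamma_p(x)\Gamma_p(1-x)=(-1)^{a_0(x)}$ (where $a_0(x)\in\{1,\dots,p\}$ with $a_0(x)\equiv x$), and the Gauss--Legendre duplication formula $\Gamma_p(\tfrac{x}{2})\,\Gamma_p(\tfrac{x+1}{2})=\omega(c_x)\,\Gamma_p(\tfrac12)\,\Gamma_p(x)$ for a suitable character value $\omega(c_x)$; both hold since $p\nmid 2d(d-1)$. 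First I would expand the left-hand side as $\frac{-1}{p-1}\sum_{j=0}^{p-2}(-1)^{j(d-1)}\,\overline{\omega}^{\,j}(\lambda)\,P(j)$, where $P(j)$ is the product of the normalized gamma quotients (and the accompanying powers of $-p$) attached to the numerator parameters $\{\tfrac{k}{d}\}_{k=1}^{d-1}$ and the denominator parameters $\{\tfrac{k}{d-1}\}_{k=0}^{d-2}$.

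The first key step is the reindexing $j\mapsto p-1-j$. This converts $\overline{\omega}^{\,j}(\lambda)$ into $\overline{\omega}^{\,j}(1/\lambda)$, and, through the reflection formula, it interchanges the roles of the fractional parts $\langle\tfrac{k}{d}-\tfrac{j}{p-1}\rangle$ and $\langle\tfrac{k}{d-1}+\tfrac{j}{p-1}\rangle$, thereby sending the ``numerator'' family to the ``denominator'' slot and conversely. This is exactly the mechanism that moves the set $\{\tfrac{k}{d}\}$ into the bottom row of the right-hand side and $\{\tfrac{k}{d-1}\}$ (after doubling) into the top row, and it is where the isolated parameter $\tfrac12$ is peeled off: this value sits in the numerator list when $d$ is even and in the denominator list when $d$ is odd, and its reflected gamma factor, being attached to $\omega^{(p-1)/2}=\phi$, is what ultimately surfaces as the quadratic prefactor $\phi(-\lambda(d-1))$ or $\phi(d\lambda)$.

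The second key step applies the duplication formula to the family of odd order, namely $m=d-1$ when $d$ is even and $m=d$ when $d$ is odd. Since $\gcd(2,m)=1$, the map $k\mapsto 2k$ permutes residues modulo $m$, and the odd numerators $\{\tfrac{2k-1}{2m}\}$ are precisely the Teichm\"uller exponents of $\{\eta^{k}\phi\}$ with $\eta$ of order $m$; at the level of Gauss sums (reached from $\Gamma_p$ via Gross--Koblitz) the identity $g(\eta^{k})\,g(\eta^{k}\phi)=\overline{\eta}^{\,2k}(2)\,g(\eta^{2k})\,g(\phi)$ rewrites the order-$m$ factors as order-$2m$ factors, yielding the denominators $2(d-1)$ and $2d$ seen on the right. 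I would then reassemble the transformed sum into a single ${_{d-1}G}_{d-1}$, picking up a forced half-period shift $j\mapsto j\pm\tfrac{p-1}{2}$ that supplies the $\phi(\lambda)$ part of the prefactor, and finally split into the even and odd cases. The main obstacle is the bookkeeping: one must check that after reflection and duplication the two transformed parameter multisets are \emph{exactly} the claimed lists -- $\{\tfrac{2k-1}{2(d-1)}\}$ over $\{0\}\cup\{\tfrac{k}{d}:k\neq\tfrac{d}{2}\}$ in the even case and their odd-case analogues -- and that every residual power of $-p$, every character value $\overline{\eta}^{\,2k}(2)$ and $\omega(c_x)$, and every sign from the reflection formula cancels except for the single factor $\phi(-\lambda(d-1))$ or $\phi(d\lambda)$; keeping McCarthy's floor-function exponents of $-p$ consistent through both identities is the most delicate part. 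Alternatively, one could specialize $\lambda=\mu^{d}(d-1)^{d-1}$ and read the left side as $N_{\mathbb{F}_p}(Z_\mu)-1$ via Theorem \ref{point-count}, but the direct manipulation is preferable since it avoids assuming that $\lambda/(d-1)^{d-1}$ is a $d$-th power in $\mathbb{F}_p$.
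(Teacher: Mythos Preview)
Your plan contains the right raw ingredients but is organized around a step that does not do what you claim. The reindexing $j\mapsto p-1-j$ does invert $\lambda$ and, via the reflection formula, effectively swaps the two rows; that part is fine. But your ``second key step''---the Gauss--Legendre duplication formula applied to the odd-order family---is superfluous here, and attributing the appearance of the denominators $2(d-1)$ (resp.\ $2d$) to it is a misdiagnosis. Those denominators arise for a much more elementary reason: once the rows are swapped, the subsequent half-period shift $j\mapsto j+\tfrac{p-1}{2}$ translates every parameter by $\tfrac12$, and adding $\tfrac12$ to $\tfrac{k}{m}$ with $m$ odd yields exactly the set $\{\tfrac{2h-1}{2m}\}$, while adding $\tfrac12$ to $\tfrac{k}{m}$ with $m$ even simply permutes the set $\{\tfrac{k}{m}\}$ and exchanges the entry $\tfrac12$ for $0$. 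So the two substitutions $j\mapsto -j$ and $j\mapsto j+\tfrac{p-1}{2}$ already produce the right-hand parameter multisets on the nose; no duplication identity is needed, and inserting one will only complicate the bookkeeping you are (rightly) worried about. You also attribute the peeling off of $\tfrac12$ to the reflection step, but it is the half-period shift that swaps $\tfrac12\leftrightarrow 0$ in the even-denominator family.

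The paper takes a different and shorter route. It never manipulates the $\Gamma_p$-sum directly; instead it writes a single Gauss sum
\[
A_\lambda=\sum_{l=0}^{p-2} g(T^{-l})\,g(T^{-(d-1)l})\,g(T^{dl})\,T^l\!\Bigl(\tfrac{(-1)^d(d-1)^{d-1}}{d^d\lambda}\Bigr),
\]
shows $A_\lambda=p-1+p(p-1)\cdot(\text{LHS }G)$ by rerunning the computation of Theorem~\ref{point-count}, then substitutes $l\mapsto l-\tfrac{p-1}{2}$ inside $A_\lambda$. The shifted sum is recognized as exactly the Gauss sum treated in \cite[Eqns.~11 and 22]{BS-FFA}, whose evaluation there yields $p-1+p(p-1)\cdot\phi(\cdots)\cdot(\text{RHS }G)$; comparing the two expressions for $A_\lambda$ gives the transformation. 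In effect the paper exports all of your delicate floor-function and sign bookkeeping to the earlier paper. Your direct approach can certainly be made to work once you drop the duplication detour, but the Gauss-sum formulation with a single index shift is considerably cleaner.
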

For example, if we put $d=6$, then for all prime $p>5$, we have
\begin{eqnarray}
&&{_{5}G}_{5}\left[\begin{array}{ccccc}
                                           \frac{1}{6}, & \frac{2}{6}, & \frac{3}{6}, & \frac{4}{6}, & \frac{5}{6} \vspace{.1cm}\\
                                           0, & \frac{1}{5}, &\frac{2}{5}, &\frac{3}{5}, & \frac{4}{5}
                                         \end{array}|\lambda
\right]=\phi(-5\lambda){_{5}G}_{5}\left[\begin{array}{ccccc}
                                           \frac{1}{10}, & \frac{3}{10}, & \frac{5}{10},& \frac{7}{10}, & \frac{9}{10} \vspace{.1cm}\\
                                           0, & \frac{1}{6},&\frac{2}{6}, & \frac{4}{6}, & \frac{5}{6}
                                         \end{array}|\frac{1}{\lambda}
\right].\nonumber
\end{eqnarray}
\begin{theorem}\label{MT-5}
For $p>7$ and $p\neq23$ we have
\begin{eqnarray}
{_4G}_4\left[\begin{array}{cccc}
         0, \hspace{-.1cm}& \frac{1}{4}, \hspace{-.1cm}& \frac{1}{2}, \hspace{-.1cm}& \frac{3}{4} \vspace{.1 cm} \\
         \frac{1}{10}, \hspace{-.1cm}& \frac{3}{10}, \hspace{-.1cm}& \frac{7}{10}, \hspace{-.1cm}& \frac{9}{10}
       \end{array}|-\frac{5^5}{4^4}\right]&=&\phi(-1)+\phi(3)+\phi(-1)~{_2G}_2\left[\begin{array}{cc}
                                                                         \frac{1}{3}, & \frac{2}{3} \vspace{.1 cm} \\
                                                                         0, & \frac{1}{2}
                                                                       \end{array}|\frac{4}{27}\right];\nonumber\\
{_4G}_4\left[\begin{array}{cccc}
               \frac{1}{5}, & \frac{2}{5}, & \frac{3}{5}, & \frac{4}{5} \vspace{.1 cm} \\
               0, & \frac{1}{4}, & \frac{1}{2}, & \frac{3}{4}
             \end{array}|-\frac{4^4}{5^5}\right]&=&1+\phi(-3)+{_2G}_2
                                                       \left[\begin{array}{cc}
                                                               0, & \frac{1}{2} \vspace{.1 cm}\\
                                                               \frac{1}{6}, & \frac{5}{6}
                                                             \end{array}|\frac{27}{4}
                                                       \right];\nonumber\\
                                                       &=&1+\phi(-3)+{_2G}_2
                                                       \left[\begin{array}{cc}
                                                               \frac{1}{3}, & \frac{2}{3} \vspace{.1 cm}\\
                                                               0, & \frac{1}{2}
                                                             \end{array}|\frac{4}{27}
                                                       \right].\nonumber
                                                   \end{eqnarray}

\end{theorem}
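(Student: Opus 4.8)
The plan is to recognise the two exotic arguments as reciprocal special fibres of the family $Z_\lambda$ and to reduce the whole theorem to a single point count. First I would isolate the second identity as the substantive one. Its two right-hand forms, ${_2G}_2[\tfrac13,\tfrac23;0,\tfrac12\,|\,\tfrac{4}{27}]$ and ${_2G}_2[0,\tfrac12;\tfrac16,\tfrac56\,|\,\tfrac{27}{4}]$, are interchanged by Theorem \ref{MT-6} with $d=3$ at $\lambda=\tfrac{4}{27}$, where the multiplier $\phi(3\cdot\tfrac{4}{27})=\phi(\tfrac49)=1$ is trivial, so the two forms are equal. Moreover the first identity follows from the second: applying Theorem \ref{MT-6} with $d=5$ (odd case) at $\lambda=-4^4/5^5$ relates the left-hand ${_4G}_4$ of the second identity (call it $B$) to that of the first (call it $A$, at the reciprocal $-5^5/4^4$) via $B=\phi(5\lambda)A=\phi(-1)A$, since $\phi(5\lambda)=\phi(-4^4/5^4)=\phi(-1)$; as $\phi(-1)^2=1$ this reads $A=\phi(-1)B$, and multiplying the second identity through by $\phi(-1)$, using $\phi(-1)\phi(-3)=\phi(3)$, reproduces the first identity verbatim. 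Hence it suffices to prove
\[
{_4G}_4[\tfrac15,\tfrac25,\tfrac35,\tfrac45;\,0,\tfrac14,\tfrac12,\tfrac34\,|\,-\tfrac{4^4}{5^5}]=1+\phi(-3)+{_2G}_2[\tfrac13,\tfrac23;\,0,\tfrac12\,|\,\tfrac{4}{27}].
\]

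Next I would realise the left-hand ${_4G}_4$ as a point count. By Theorem \ref{point-count} with $d=5$ and $\lambda_0=-\tfrac15$ (so that $\lambda_0^5(d-1)^{d-1}=(-\tfrac15)^5\cdot 4^4=-\tfrac{4^4}{5^5}$), this series equals $N_{\mathbb{F}_p}(Z_{\lambda_0})-1$. On the chart $x_2=1$ the variety $Z_{\lambda_0}$ is cut out by $x^5+1=5\lambda_0 x$, i.e.\ $x^5+x+1=0$, and the point $[1:0]$ is not a solution, so $N_{\mathbb{F}_p}(Z_{\lambda_0})$ is the number of $\mathbb{F}_p$-roots of $x^5+x+1$. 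The key algebraic input is the factorisation
\[
x^5+x+1=(x^2+x+1)(x^3-x^2+1),
\]
and a short resultant computation ($\mathrm{Res}=7$) shows the two factors are coprime unless $p=7$; thus for $p>7$ the root count splits additively.

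The quadratic $x^2+x+1$ has discriminant $-3$ and hence $1+\phi(-3)$ roots. For the cubic I would exploit that $x^3-x^2+1$ is the reciprocal polynomial of $x^3-x+1$; since neither has $0$ as a root, $x\mapsto x^{-1}$ is a bijection of their $\mathbb{F}_p$-roots, so the two cubics have equally many roots. But $x^3-x+1=0$ is exactly the affine equation of $Z_{1/3}$ for $d=3$, whence Theorem \ref{point-count} (with $d=3$, $\lambda=\tfrac13$, argument $(\tfrac13)^3\cdot 2^2=\tfrac{4}{27}$) shows the cubic has $1+{_2G}_2[\tfrac13,\tfrac23;\,0,\tfrac12\,|\,\tfrac{4}{27}]$ roots. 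Assembling,
\[
N_{\mathbb{F}_p}(Z_{\lambda_0})=(1+\phi(-3))+\Big(1+{_2G}_2[\tfrac13,\tfrac23;0,\tfrac12\,|\,\tfrac{4}{27}]\Big),
\]
and subtracting $1$ yields the boxed identity; the first identity and the second right-hand form then follow from the reductions of the first paragraph.

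The main obstacle is conceptual rather than computational: spotting that the unrelated-looking arguments $-4^4/5^5$ and $-5^5/4^4$ are the reciprocal fibres $\lambda_0=-\tfrac15$ of $Z_\lambda$, and that at this fibre the defining quintic factors with a cubic piece that is, after the reciprocal substitution, the member $Z_{1/3}$ of the $d=3$ family; after that, only the bookkeeping of the quadratic characters through Theorem \ref{MT-6} requires care. The excluded primes are dictated by $\mathrm{disc}(x^5+x+1)=3\cdot 7^2\cdot 23$: the hypothesis $p>7$ removes $p=2,3,5,7$ (at $p=7$ the two factors collide and additivity fails, while $p=2,5$ are already barred by $p\nmid d(d-1)$), and $p\neq 23$ removes the fibre where the cubic factor $x^3-x^2+1$ (discriminant $-23$) acquires a repeated root and the separable-factorisation argument degenerates.
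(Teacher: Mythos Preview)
Your proof is correct and takes a genuinely different route from the paper's. Both approaches use Theorem \ref{MT-6} at $d=3$ to identify the two ${_2G}_2$ right-hand sides and at $d=5$ to pass between the two ${_4G}_4$ values; the difference lies in how the core identity is obtained. The paper proceeds in the opposite order from you: it first quotes \cite[Theorem 4.6]{BS5} as a black box to get the first displayed identity (in the form with ${_2G}_2[0,\tfrac12;\tfrac16,\tfrac56\,|\,\tfrac{27}{4}]$ on the right), and then combines this with the two instances of Theorem \ref{MT-6} to deduce the second set of identities. You instead give a self-contained derivation of the second identity: realise the ${_4G}_4$ at $-4^4/5^5$ as $N_{\mathbb{F}_p}(Z_{-1/5})-1$ via Theorem \ref{point-count}, observe that the affine model is $x^5+x+1=0$, factor as $(x^2+x+1)(x^3-x^2+1)$, and recognise the reciprocated cubic as the $d=3$ fibre $Z_{1/3}$, applying Theorem \ref{point-count} a second time. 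Your route stays entirely within the paper's own machinery and makes transparent where the arguments $-4^4/5^5$ and $4/27$ come from.

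One small correction to your closing paragraph: your justification for excluding $p=23$ is not the right one. Your argument needs only that the two factors of $x^5+x+1$ be coprime (resultant $7$), not that each be separable; coprimality alone makes the distinct-root counts additive, and Theorem \ref{point-count} counts distinct projective points regardless of multiplicity. The double root of $x^3-x^2+1$ at $p=23$ is therefore harmless, and in fact your argument establishes the identities at $p=23$ as well. The exclusion in the statement is inherited from the hypotheses of \cite[Theorem 4.6]{BS5} in the paper's proof, not from any intrinsic obstruction visible in yours.
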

From Theorem \ref{MT1} and Theorem \ref{MT-5}, we have the following summation identities.
\begin{cor}
For $p>7$ and $p\neq23$ we have
\begin{eqnarray}
&&\sum_{t\in\mathbb{F}_p}\phi(t(t-1))~{_2G}_2\left[\begin{array}{cc}
               \frac{1}{3}, & \frac{2}{3} \vspace{.1 cm} \\
               0, & 0
             \end{array}|\frac{4t}{27}\right]\nonumber\\
             &&\hspace{2cm}=p-1+p\phi(-3)-p\phi(-1){_4G}_4\left[\begin{array}{cccc}
         0, & \frac{1}{4}, & \frac{1}{2}, & \frac{3}{4} \vspace{.1 cm} \\
         \frac{1}{10}, & \frac{3}{10}, & \frac{7}{10}, & \frac{9}{10}
       \end{array}|-\frac{5^5}{4^4}\right];\nonumber\\
&&\sum_{t\in\mathbb{F}_p}\phi(t(t-1)){_4G}_4\left[\begin{array}{cccc}
         \frac{1}{5}, & \frac{2}{5}, & \frac{3}{5}, & \frac{4}{5} \vspace{.1 cm} \\
         0, &  0, & \frac{1}{4}, & \frac{3}{4}
       \end{array}|-\frac{4^4t}{5^5}\right]\nonumber\\
       &&\hspace{2cm}=-1-p-p\phi(-3)-p\cdot{_2G}_2
\left[\begin{array}{cc}
0, & \frac{1}{2} \vspace{.1 cm}\\
\frac{1}{6}, & \frac{5}{6}
\end{array}|\frac{27}{4}
\right];\nonumber\\
&&\hspace{2cm}=-1-p-p\phi(-3)-p\cdot{_2G}_2\left[\begin{array}{cc}
\frac{1}{3}, & \frac{2}{3} \vspace{.1 cm}\\
0, & \frac{1}{2}
\end{array}|\frac{4}{27}
\right].\nonumber
\end{eqnarray}
\end{cor}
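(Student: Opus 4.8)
The plan is to derive both summation identities mechanically, by specializing Theorem~\ref{MT1} to $d=3$ (for the first identity) and to $d=5$ (for the second), and then eliminating the $G$-series that appear on the right by substituting the two evaluations recorded in Theorem~\ref{MT-5}. Since Theorem~\ref{MT1} already expresses the relevant character sum in terms of a single ${_{d-1}G}_{d-1}$-series, the whole argument is a formal manipulation with bookkeeping of the factor $\phi$.

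\emph{First identity.} Specializing Theorem~\ref{MT1} to $d=3$, the general parameter patterns collapse to the short top list $\tfrac13,\tfrac23$, so that with $x=\tfrac{4}{27}$ one obtains
$$\sum_{t\in\mathbb{F}_p}\phi(t(t-1))\,{_2G}_2\!\left[\begin{array}{cc}\frac13 & \frac23\\ 0 & 0\end{array}\Big|\frac{4t}{27}\right]=-1-p\cdot{_2G}_2\!\left[\begin{array}{cc}\frac13 & \frac23\\ 0 & \frac12\end{array}\Big|\frac{4}{27}\right].$$
I would then solve the first displayed evaluation of Theorem~\ref{MT-5} for the series ${_2G}_2\!\left[\frac13,\frac23;0,\frac12\,\big|\,\tfrac{4}{27}\right]$; multiplying by $\phi(-1)$ and using $\phi(-1)^2=1$ together with $\phi(-1)\phi(3)=\phi(-3)$ gives
$${_2G}_2\!\left[\begin{array}{cc}\frac13 & \frac23\\ 0 & \frac12\end{array}\Big|\frac{4}{27}\right]=\phi(-1)\,{_4G}_4\!\left[\begin{array}{cccc}0 & \frac14 & \frac12 & \frac34\\ \frac{1}{10} & \frac{3}{10} & \frac{7}{10} & \frac{9}{10}\end{array}\Big|-\frac{5^5}{4^4}\right]-1-\phi(-3).$$
Substituting this into the previous display and distributing the factor $-p$ yields exactly the first claimed identity.

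\emph{Second identity.} I would instead take $d=5$ in Theorem~\ref{MT1} (the $d=5$ instance already recorded in the Example serves as a template, and the $G$-series is invariant under permuting its upper, and its lower, parameters). With $x=-\tfrac{4^4}{5^5}$ this reads
$$\sum_{t\in\mathbb{F}_p}\phi(t(t-1))\,{_4G}_4\!\left[\begin{array}{cccc}\frac15 & \frac25 & \frac35 & \frac45\\ 0 & 0 & \frac14 & \frac34\end{array}\Big|-\frac{4^4t}{5^5}\right]=-1-p\cdot{_4G}_4\!\left[\begin{array}{cccc}\frac15 & \frac25 & \frac35 & \frac45\\ 0 & \frac14 & \frac12 & \frac34\end{array}\Big|-\frac{4^4}{5^5}\right].$$
The second displayed evaluation of Theorem~\ref{MT-5} rewrites the right-hand ${_4G}_4$ as $1+\phi(-3)+{_2G}_2[\cdots]$; substituting and again distributing $-p$ gives the stated formula, while the two equivalent expressions for that ${_2G}_2$ in Theorem~\ref{MT-5} account for the two forms of the answer. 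The admissibility hypotheses $p\nmid d(d-1)$ for $d=3,5$ amount to $p\neq2,3,5$, which is absorbed by the restriction $p>7,\ p\neq23$ carried over from Theorem~\ref{MT-5}.

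The whole computation is purely formal substitution, so I expect no genuine obstacle; the only points demanding care are verifying that the general parameter patterns of Theorem~\ref{MT1} really do collapse to the short lists above when $d=3$ and $d=5$, and tracking the quadratic-character factors ($\phi(-1)^2=1$, $\phi(-1)\phi(3)=\phi(-3)$) correctly through the algebra.
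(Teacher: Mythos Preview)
Your proposal is correct and follows exactly the approach indicated in the paper, which states that the corollary follows ``From Theorem~\ref{MT1} and Theorem~\ref{MT-5}.'' The specializations of Theorem~\ref{MT1} to $d=3$ and $d=5$, the solving of the Theorem~\ref{MT-5} evaluations for the needed $G$-series, and the tracking of the quadratic-character factors are all carried out correctly.
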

\par
Finally, we find a summation identity for the Greene's finite field hypergeometric series. We first recall some definitions to state our results.
Let $q=p^e$ be a power of an odd prime $p$ and $\mathbb{F}_q$
the finite field of $q$ elements. Let $\widehat{\mathbb{F}_q^{\times}}$ be the group of all multiplicative characters $\chi: \mathbb{F}_q^{\times}\rightarrow
\mathbb{C}^{\times}$. We extend the domain of each
$\chi \in \mathbb{F}_q^{\times}$ to $\mathbb{F}_q$ by setting $\chi(0):=0$ including the trivial character $\varepsilon$.
If $A$ and $B$ are two characters on $\mathbb{F}_q$, then ${A \choose B}$ is defined by
\begin{align}
{A \choose B}:=\frac{B(-1)}{q}\sum_{x \in \mathbb{F}_q}A(x)\overline{B}(1-x),\nonumber
\end{align}
where $\overline{B}$ is the character inverse of $B$.
In \cite{greene}, J. Greene introduced the notion of hypergeometric
series over finite fields which are also known as \emph{Gaussian hypergeometric
series}. For any positive integer $n$ and characters $A_0, A_1,\ldots, A_n$ and $B_1, B_2,\ldots, B_n \in \widehat{\mathbb{F}_q^{\times}}$,
the Gaussian hypergeometric series ${_{n+1}}F_n$ is defined to be
\begin{align}
{_{n+1}}F_n\left(\begin{array}{cccc}
                A_0, & A_1, & \ldots, & A_n\\
                 & B_1, & \ldots, & B_n
              \end{array}\mid x \right):=\frac{q}{q-1}\sum_{\chi}{A_0\chi \choose \chi}{A_1\chi \choose B_1\chi}
\cdots {A_n\chi \choose B_n\chi}\chi(x),\nonumber
\end{align}
where the sum is over all multiplicative characters $\chi$ on $\mathbb{F}_q$.
\par The motivation for deriving summation identities for Greene's hypergeometric series is the following summation identity
due to Greene \cite[Theorem 3.13]{greene}. Let $A_0, A_1, \ldots, A_n, B_1, \ldots, B_n$ be multiplicative characters on $\mathbb{F}_q$
and let $x\in \mathbb{F}_q$. Greene proved that
\begin{eqnarray}\label{greene-summation}
 &&\hspace{.5cm}{_{n+1}}F_n\left(\begin{array}{cccc}
                A_0, \hspace{-.2cm} & A_1, \hspace{-.2cm} & \ldots, \hspace{-.2cm} & A_n\\
                 & B_1, \hspace{-.2cm} & \ldots, \hspace{-.2cm} & B_n
              \end{array}\mid x \right)\\
&&=\dfrac{A_nB_n(-1)}{q}\sum_{y\in\mathbb{F}_q}{_{n}}F_{n-1}\left(\begin{array}{cccc}
                A_0, \hspace{-.2cm} & A_1, \hspace{-.2cm} & \ldots, \hspace{-.2cm} & A_{n-1}\\
                 & B_1, \hspace{-.2cm} & \ldots, \hspace{-.2cm} & B_{n-1}
              \end{array}\mid xy \right)A_n(y)\overline{A_n}B_n(1-y).\nonumber
\end{eqnarray}
We first express the number of $\mathbb{F}_q$-points on $Z_{\lambda}$ in terms of Greene's hypergeometric series in the following result.
\begin{theorem}\label{point-count2}
Let $p$ be an odd prime and $q=p^e$ for some $e>0$. Let $d\geq 3$ be odd such that $q\equiv1\pmod{d(d-1)}$.
For $\lambda \neq 0$, the number of $\mathbb{F}_q$-points $N_{\mathbb{F}_q}(Z_{\lambda})$ on the
$0$-dimensional variety $Z_\lambda^d: x_1^d+x_2^d=d\lambda x_1x_2^{d-1}$ is given by
\begin{eqnarray}
&&q\cdot N_{\mathbb{F}_q}(Z_\lambda)=q-1+q^{\frac{d-1}{2}}\sum_{t\in\mathbb{F}_q}\phi(1-t)\nonumber\\
&&\times{_{d-1}F}_{d-2}\left(\begin{array}{cccccccc}
                       \chi^{\frac{d-1}{2}}, \hspace{-.2cm}& \chi, \hspace{-.2cm}& \ldots, \hspace{-.2cm}& \chi^{\frac{d-1}{2}-1},\hspace{-.2cm}
                       & \chi^{\frac{d-1}{2}+1},
\hspace{-.2cm} & \chi^{\frac{d-1}{2}+2}, \hspace{-.2cm} & \ldots, \hspace{-.2cm} & \chi^{d-1} \\
                      ~ & \hspace{-.2cm} \psi, \hspace{-.2cm} & \ldots, \hspace{-.2cm} & \psi^{\frac{d-1}{2}-1}, \hspace{-.2cm}
                      & \varepsilon, \hspace{-.2cm} & \psi^{\frac{d-1}{2}+1}, \hspace{-.2cm} & \ldots, \hspace{-.2cm} & \psi^{d-2}
\end{array}|\frac{t}{\alpha}\right),\nonumber
\end{eqnarray}
where $\chi$ and $\psi$ are characters of order $d$ and $d-1$ respectively, and $\alpha=\lambda^d(d-1)^{d-1}$.
\end{theorem}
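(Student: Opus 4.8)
The plan is to count the $\mathbb{F}_q$-points of $Z_\lambda$ directly and then massage the resulting character sum into Greene's series. First I would dehomogenize: in $\mathbb{P}^1_{\mathbb{F}_q}$ a point $[x_1:x_2]$ on $Z_\lambda$ must have $x_2\neq0$ (since $x_2=0$ forces $x_1=0$), so setting $x=x_1/x_2$ gives a bijection between $N_{\mathbb{F}_q}(Z_\lambda)$ and the roots of $x^d-d\lambda x+1=0$ in $\mathbb{F}_q$. Fixing a nontrivial additive character $\theta$ and using that $\frac1q\sum_{z}\theta(zf(x))$ equals $1$ exactly when $f(x)=0$, I would write $N_{\mathbb{F}_q}(Z_\lambda)=1+\frac1q\sum_{z\neq0}\theta(z)\sum_{x}\theta\big(z(x^d-d\lambda x)\big)$, the $z=0$ term producing the isolated $1$ (this is the analogue of the $1+$ in Theorem~\ref{point-count}).

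Next I would evaluate the inner sum by Fourier-expanding each exponential on $\mathbb{F}_q^\times$ through Gauss sums, i.e. $\theta(y)=\frac{1}{q-1}\sum_\eta g(\eta)\overline\eta(y)$ for $y\neq0$, applied separately to $\theta(zx^d)$ and $\theta(-zd\lambda x)$. Multiplicative orthogonality in $x$ forces the two summation characters to be related (one becomes $\eta^{-d}$), collapsing everything to a single character sum; after also summing the $\theta(z)$ factor I expect to reach $q\,N_{\mathbb{F}_q}(Z_\lambda)=q-1+\frac{1}{q-1}\sum_{\eta}g(\eta)\,g(\eta^{-d})\,g(\eta^{d-1})\,\eta^d(-d\lambda)$. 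Here the hypothesis $q\equiv1\pmod{d(d-1)}$ guarantees that characters $\chi,\psi,\phi$ of orders $d$, $d-1$, $2$ all exist, which is what makes the next step possible.

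The heart of the argument is to break the two composite Gauss sums $g(\eta^{-d})$ and $g(\eta^{d-1})$ into genuine products using the Hasse--Davenport product relation with $\chi$ and $\psi$ of orders $d$ and $d-1$. This rewrites them as $\prod_i g(\eta^{-1}\chi^{i})$ and $\prod_j g(\eta\psi^{j})$ up to explicit constants; crucially the character values at $d$ produced by the two relations cancel against the $d^d$ inside $\eta^d(-d\lambda)$, leaving exactly $\eta(-\alpha)$ with $\alpha=\lambda^d(d-1)^{d-1}$. Reindexing $\eta\mapsto\eta^{-1}$ (which inverts the argument to $1/\alpha$ and orients the top parameters as powers of $\chi$ and the bottom as powers of $\psi$) and converting each resulting pair of Gauss sums into a Greene binomial via $J(A,B)=g(A)g(B)/g(AB)$ and ${A\choose B}=\frac{B(-1)}{q}J(A,\overline B)$, I would recognize the whole expression as a constant multiple of a Greene ${_d}F_{d-1}$ whose two extra parameters are precisely $\varepsilon$ on top and $\phi=\psi^{(d-1)/2}$ on the bottom, with an $\varepsilon$ already sitting in the middle bottom slot.

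Finally I would apply Greene's reduction formula \eqref{greene-summation} to this ${_d}F_{d-1}$, peeling off the parameter pair $(A_n,B_n)=(\varepsilon,\phi)$. Since then $A_n(y)\overline{A_n}B_n(1-y)=\phi(1-y)$, this produces exactly the sum $\sum_{t}\phi(1-t)\,{_{d-1}}F_{d-2}[\cdots\mid t/\alpha]$ matching the stated series. I expect the main obstacle to be the constant bookkeeping: tracking the Hasse--Davenport normalizing products $\prod g(\chi^i)$ and $\prod g(\psi^j)$ together with the per-binomial factors $1/q$ and the global $q/(q-1)$, and checking via $g(\eta)g(\overline\eta)=\eta(-1)q$ that all the $q$-powers and the sign and $\phi(-1)$ factors collapse to the single clean constant $q^{(d-1)/2}$; a secondary care point is verifying that the degenerate characters ($\eta=\varepsilon$ and the $x=0$, $t=0$ boundary terms) contribute consistently with the convention $\chi(0)=0$.
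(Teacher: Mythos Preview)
Your approach is correct and runs parallel to the paper's, diverging only in how the $t$-sum is introduced. Both proofs begin identically: reduce to the Gauss-sum expression
\[
q\,N_{\mathbb{F}_q}(Z_\lambda)=q-1+\frac{1}{q-1}\sum_{l}g(T^{-l})g(T^{-(d-1)l})g(T^{dl})T^{-dl}(-d\lambda),
\]
and then expand $g(T^{dl})$ and $g(T^{-(d-1)l})$ via the Davenport--Hasse product relation (Lemma~\ref{lemma9}). From this point the paper does \emph{not} first assemble a ${_d}F_{d-1}$. Instead it isolates the single ratio $g(T^{l})g(T^{-l}\phi)/g(\phi)$ and rewrites it as a Jacobi sum over $t$ via Lemma~\ref{lemma10},
\[
\frac{g(T^{l})g(T^{-l}\phi)}{g(\phi)}=\sum_{t\in\mathbb{F}_q}\phi(t(t-1))T^{-l}(-t),
\]
thereby producing the $\sum_t$ factor at the Gauss-sum level; the remaining $d-1$ Gauss-sum pairs are then converted to Greene binomials (Lemma~\ref{lemma15}) and read off directly as the stated ${_{d-1}}F_{d-2}$, after the substitution $t\mapsto 1/t$ that turns $\phi(t(t-1))$ into $\phi(1-t)$.

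Your alternative---pack \emph{all} $d$ pairs into a ${_d}F_{d-1}$ and then peel off the pair $(A_n,B_n)=(\varepsilon,\phi)$ using Greene's reduction~\eqref{greene-summation}---is the same mechanism one level up: Greene's formula is proved precisely by expanding the last binomial as a Jacobi sum, which is exactly what Lemma~\ref{lemma10} does by hand. Each route buys something. The paper's route is slightly shorter because it never needs to name the intermediate ${_d}F_{d-1}$ or check that the extra binomial $\binom{T^l}{\phi T^l}$ (with difference $\phi\neq\varepsilon$) fits Lemma~\ref{lemma15} without a correction term. Your route is more structural and makes the appearance of $\phi(1-t)$ an immediate consequence of a known identity rather than an ad hoc Jacobi-sum computation; it also explains transparently why the bottom row of the target ${_{d-1}}F_{d-2}$ is exactly $\{\psi^j:0\le j\le d-2,\ j\neq\tfrac{d-1}{2}\}$. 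The constant bookkeeping you flag (the DH normalizations $\prod g(\chi^i)$, $\prod g(\psi^j)$, the per-binomial $q$'s, and the signs) collapses in either approach to the single factor $q^{(d-1)/2}$; in the paper this is carried out explicitly around~\eqref{eq-34}.
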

Using the above point-count formula, we prove the following summation identity.
Unlike to \eqref{greene-summation} our summation identity contains characters of specific orders.
It would be interesting to know if the identity could be derived from \eqref{greene-summation}.
\begin{theorem}\label{MT-4}
Let $p$ be an odd prime and $q=p^e$ for some $e>0$. Let $d\geq 3$ be odd such that $q\equiv1\pmod{d(d-1)}$.
For $\lambda\in\mathbb{F}_q^{\times}$ we have
\begin{eqnarray}
&&\hspace{-.5cm}\sum_{t\in\mathbb{F}_q}\phi(1-t){_{d-1}F}_{d-2}\left(\begin{array}{cccccccc}
                       \chi^{\frac{d-1}{2}}, \hspace{-.2cm} & \chi, \hspace{-.2cm}& \ldots, \hspace{-.2cm}& \chi^{\frac{d-1}{2}-1}, \hspace{-.2cm}& \chi^{\frac{d-1}{2}+1},
& \hspace{-.2cm}\chi^{\frac{d-1}{2}+2},\hspace{-.2cm} & \ldots, \hspace{-.2cm}& \chi^{d-1} \\
                      ~ & \psi,\hspace{-.2cm} & \ldots, \hspace{-.2cm}& \psi^{\frac{d-1}{2}-1},\hspace{-.2cm} & \varepsilon,\hspace{-.2cm}
                      & \psi^{\frac{d-1}{2}+1}, \hspace{-.2cm}& \ldots,\hspace{-.2cm}
& \psi^{d-2}
                     \end{array}|\lambda t
\right)\nonumber\\
&&\hspace{-.5cm}=\frac{1-\phi(-\lambda)}{q^{\frac{d-1}{2}}}+q\phi(-1){_dF}_{d-1}\left(\begin{array}{ccccccc}
                   \phi,\hspace{-.2cm} & \chi, \hspace{-.2cm} & \ldots, \hspace{-.2cm} & \chi^{\frac{d-1}{2}}, \hspace{-.2cm}
                   & \chi^{\frac{d-1}{2}+1}, \hspace{-.2cm} & \ldots, & \chi^{d-1} \\
                   ~ &  \psi, \hspace{-.2cm} & \ldots, \hspace{-.2cm} & \psi^{\frac{d-1}{2}}, \hspace{-.2cm} & \psi^{\frac{d-1}{2}}, \hspace{-.2cm}
                   & \ldots, \hspace{-.2cm} & \psi^{d-2}
                 \end{array}|\lambda\right),\nonumber
\end{eqnarray}
where $\chi$ and $\psi$ are characters of order $d$ and $d-1$ respectively.
\end{theorem}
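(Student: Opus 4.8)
The plan is to prove Theorem \ref{MT-4} by giving a second, independent evaluation of $N_{\mathbb{F}_q}(Z_\lambda)$ and matching it against the one already recorded in Theorem \ref{point-count2}. Write $\theta$ for a fixed nontrivial additive character of $\mathbb{F}_q$ and $g(\chi)=\sum_{x}\chi(x)\theta(x)$ for the Gauss sum. Theorem \ref{point-count2} can be read as
\[
qN_{\mathbb{F}_q}(Z_\lambda)-(q-1)=q^{\frac{d-1}{2}}\sum_{t\in\mathbb{F}_q}\phi(1-t)\,{_{d-1}F}_{d-2}\!\left(\cdots\mid \tfrac{t}{\alpha}\right),
\]
so that, after setting $\lambda=1/\alpha$ and dividing by $q^{(d-1)/2}$, the left-hand side of Theorem \ref{MT-4} is exactly $\bigl(qN_{\mathbb{F}_q}(Z_\lambda)-(q-1)\bigr)/q^{(d-1)/2}$. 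Hence it suffices to establish the companion formula
\[
qN_{\mathbb{F}_q}(Z_\lambda)=q-\phi(-\lambda)+q^{\frac{d+1}{2}}\phi(-1)\,{_dF}_{d-1}\!\left(\cdots\mid \lambda\right),
\]
with the $\phi,\chi,\psi$-array as stated; subtracting $q-1$ and dividing by $q^{(d-1)/2}$ then produces precisely the constant $\frac{1-\phi(-\lambda)}{q^{(d-1)/2}}$ and the factor $q\phi(-1)$.

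To produce this companion formula I would first reduce the point count to roots of a single polynomial: on the chart $x_2\neq0$ the equation becomes $w^d-d\lambda w+1=0$ with $w=x_1/x_2$, while $x_2=0$ contributes no projective point, so $N_{\mathbb{F}_q}(Z_\lambda)=\#\{w\in\mathbb{F}_q:w^d-d\lambda w+1=0\}$. Detecting the vanishing with additive characters, separating the $y=0$ term, and expanding both $\theta(yw^d)$ and $\theta(-yd\lambda w)$ via the orthogonality relation $\theta(u)=\frac{1}{q-1}\sum_\chi g(\bar\chi)\chi(u)$ (for $u\neq0$) collapses the inner sum over $w$ to the single condition $\chi_2=\overline{\chi_1^{\,d}}$ and yields
\[
qN_{\mathbb{F}_q}(Z_\lambda)=q-1+\frac{1}{q-1}\sum_{\chi}g(\bar\chi)\,g(\chi^{d})\,g(\chi^{1-d})\,\chi^{-d}(-d\lambda).
\]
The constant $q-1$ already agrees with Theorem \ref{point-count2}, so the entire content is the evaluation of $\Sigma:=\frac{1}{q-1}\sum_\chi g(\bar\chi)g(\chi^d)g(\chi^{1-d})\chi^{-d}(-d\lambda)$.

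The decisive step is to apply the Hasse--Davenport multiplication formula to $g(\chi^{d})$ and to $g(\chi^{d-1})$, which introduces the fixed characters $\chi$ of order $d$ and $\psi$ of order $d-1$; because $d$ is odd, $d-1$ is even and $\psi^{(d-1)/2}$ is the quadratic character $\phi$, which is exactly the source of the extra $\phi$ in the top row and of the repeated $\psi^{(d-1)/2}$ in the bottom row of the target ${_dF}_{d-1}$. Rewriting the resulting products of Gauss sums as Jacobi sums and then as Greene's binomial coefficients $\binom{A}{B}$ identifies the generic part of $\Sigma$ with $q^{(d+1)/2}\phi(-1)\,{_dF}_{d-1}(\cdots\mid\lambda)$; the power of $q$ and the sign $\phi(-1)$ come from the normalizing factors $\eta(m^{-m})$ in Hasse--Davenport together with the relation $g(\chi)g(\bar\chi)=\chi(-1)q$ used to trade $g(\chi^{1-d})$ for $g(\chi^{d-1})$. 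The remaining, degenerate characters—those with $\chi^{d}=\varepsilon$ or $\chi^{d-1}=\varepsilon$, in particular $\chi=\varepsilon$ and $\chi=\phi$—fall outside the generic pattern and must be extracted by hand; using $g(\varepsilon)=-1$ and $g(\phi)^2=\phi(-1)q$, these contribute exactly the elementary term $q-\phi(-\lambda)$.

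The main obstacle I anticipate is the Hasse--Davenport bookkeeping: tracking all the $d$- and $(d-1)$-fold products, the $\eta(m^{-m})$-type constants, and the sign characters so that the generic part lands on precisely the stated $\phi,\chi,\psi$-array with the correct argument $\lambda$, and—separately—isolating the degenerate characters so that their Gauss-sum values collapse to exactly $q-\phi(-\lambda)$ rather than some nearby expression. A secondary point to verify is that the identity holds for every $\lambda\in\mathbb{F}_q^\times$ and not merely for the values $1/\alpha$ produced by the variety: since the conversion of $\Sigma$ into each hypergeometric form is a Gauss-sum identity valid for an arbitrary argument, it is enough to check that both the sum in Theorem \ref{point-count2} and the series here expand, through Greene's definition, to the same coefficient of each $\rho(\lambda)$, which the same Hasse--Davenport computation supplies.
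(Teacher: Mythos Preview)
Your proposal is correct and follows essentially the same route as the paper: both sides of the identity are obtained as two different hypergeometric evaluations of the same Gauss-sum expression
\[
A_\lambda=\sum_{l=0}^{q-2} g(T^{-l})\,g(T^{-(d-1)l})\,g(T^{dl})\,T^{l}\!\left(\tfrac{-(d-1)^{d-1}\lambda}{d^d}\right),
\]
one evaluation (via the Hasse--Davenport splitting plus the Jacobi-sum substitution of Lemma~\ref{lemma10}) producing the ${_{d-1}F}_{d-2}$ sum over $t$, and the other (via a different pairing of the same Gauss-sum products) producing the ${_dF}_{d-1}$ plus the elementary term.

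The only noteworthy difference is organizational. You route the argument through the point count $N_{\mathbb{F}_q}(Z_\mu)$ and then have to argue separately that the identity extends from the special values $\lambda=1/\alpha$ to all $\lambda\in\mathbb{F}_q^\times$; the paper sidesteps this by introducing $A_\lambda$ with a \emph{free} parameter $\lambda$ from the start, so that both evaluations hold identically for every $\lambda$ without any extension step. Also, for the ${_dF}_{d-1}$ evaluation the paper does not redo the Hasse--Davenport bookkeeping you outline but simply quotes the computation already carried out in \cite[Thm.~1.3]{BS-RAMA} (the case $i=1$ there), which is exactly what your sketch would reproduce.
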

If we put $d=3$ in Theorem \ref{MT-4}, then, for $\lambda\neq 0$, we have
\begin{eqnarray}
&&\sum_{t\in\mathbb{F}_q}\phi(1-t){_{2}F}_{1}\left(\begin{array}{cc}
                       \chi_3, & \chi_3^2 \\
                      ~ & \varepsilon
                     \end{array}|\lambda t
\right)\nonumber\\
&&=\frac{1-\phi(-\lambda)}{q}+q\phi(-1){_3F}_{2}\left(\begin{array}{ccc}
                   \phi, & \chi_3, & \chi_3^{2} \\
                   ~ &  \phi, & \phi
                 \end{array}|\lambda\right),\nonumber
\end{eqnarray}
where $\chi_3$ is a charcater of order 3. In particular, if we take $\lambda=-1$, then we have
\begin{eqnarray}
&&\sum_{t\in\mathbb{F}_q}\phi(1+t){_{2}F}_{1}\left(\begin{array}{cc}
                       \chi_3, & \chi_3^2 \\
                      ~ & \varepsilon
                     \end{array}| t
\right)=q\phi(-1){_3F}_{2}\left(\begin{array}{ccc}
                   \phi, & \chi_3, & \chi_3^{2} \\
                   ~ &  \phi, & \phi
                 \end{array}|-1\right).\nonumber
\end{eqnarray}
If we apply \eqref{greene-summation}, then we have
\begin{eqnarray}
&&\sum_{t\in\mathbb{F}_q}\chi_3^2(t)\chi_3\phi(1+t){_{2}F}_{1}\left(\begin{array}{cc}
                       \phi, & \chi_3 \\
                      ~ & \phi
                     \end{array}| t
\right)=q\chi_3\phi(-1){_3F}_{2}\left(\begin{array}{ccc}
                   \phi, & \chi_3, & \chi_3^{2} \\
                   ~ &  \phi, & \phi
                 \end{array}|-1\right).\nonumber
\end{eqnarray}
\begin{remark}
When $d$ is even, we are unable to simplify certain Gauss sums which appear while counting points on the family $Z_{\lambda}$.
It would be interesting to know if similar results like Theorem \ref{MT-4} and Theorem \ref{point-count2} exist when $d$ is even.
\end{remark}

\section{Preliminaries}
\subsection{Gauss sums and Davenport-Hasse relation}
Recall that $\widehat{\mathbb{F}_q^\times}$ denotes the group of all multiplicative characters on $\mathbb{F}_q$.
The \emph{orthogonality relations} for multiplicative characters are listed in the following lemma.
\begin{lemma}\emph{(\cite[Chapter 8]{ireland}).}\label{lemma2} We have
\begin{enumerate}
\item $\displaystyle\sum_{x\in\mathbb{F}_q}\chi(x)=\left\{
                                  \begin{array}{ll}
                                    q-1 & \hbox{if~ $\chi=\varepsilon$;} \\
                                    0 & \hbox{if ~~$\chi\neq\varepsilon$.}
                                  \end{array}
                                \right.$
\item $\displaystyle\sum_{\chi\in \widehat{\mathbb{F}_q^\times}}\chi(x)~~=\left\{
                            \begin{array}{ll}
                              q-1 & \hbox{if~~ $x=1$;} \\
                              0 & \hbox{if ~~$x\neq1$.}
                            \end{array}
                          \right.$
\end{enumerate}
\end{lemma}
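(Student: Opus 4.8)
The plan is to prove both orthogonality relations by the same standard device: multiply the sum by one extra character value (or one extra field element), reindex by a bijection, and conclude that the sum is fixed by multiplication by a quantity different from $1$, forcing it to vanish. Throughout I would use the two structural facts that $\mathbb{F}_q^\times$ is cyclic of order $q-1$ and that its dual group $\widehat{\mathbb{F}_q^\times}$ is therefore also of order $q-1$, together with the convention $\chi(0)=0$ adopted in the excerpt for every character, including $\varepsilon$.

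For part (1), I would first note that $\chi(0)=0$ collapses the sum to $\sum_{x\in\mathbb{F}_q^\times}\chi(x)$. If $\chi=\varepsilon$ this is $\sum_{x\in\mathbb{F}_q^\times}1=q-1$. If $\chi\neq\varepsilon$, pick $y\in\mathbb{F}_q^\times$ with $\chi(y)\neq1$ and set $S:=\sum_{x\in\mathbb{F}_q^\times}\chi(x)$. Since $x\mapsto yx$ permutes $\mathbb{F}_q^\times$, I get $\chi(y)\,S=\sum_{x}\chi(yx)=S$, whence $(\chi(y)-1)S=0$ and therefore $S=0$.

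For part (2), the case $x=1$ is immediate because $\chi(1)=1$ for every $\chi$, so the sum equals $|\widehat{\mathbb{F}_q^\times}|=q-1$; the case $x=0$ follows from $\chi(0)=0$, giving sum $0$. For $x\in\mathbb{F}_q^\times$ with $x\neq1$, I would invoke the dual version of the shift: choose a character $\psi$ with $\psi(x)\neq1$, and since $\chi\mapsto\psi\chi$ permutes $\widehat{\mathbb{F}_q^\times}$, the quantity $T:=\sum_{\chi}\chi(x)$ satisfies $\psi(x)\,T=\sum_{\chi}(\psi\chi)(x)=T$, forcing $T=0$.

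The only genuine content, and the step I would flag as the substantive point rather than a routine manipulation, is the existence of the separating characters used above: a nontrivial $\chi$ taking a value $\neq1$ somewhere (part 1), and for each $x\neq1$ a character $\psi$ with $\psi(x)\neq1$ (part 2). Both follow from cyclicity of $\mathbb{F}_q^\times$: fixing a generator $g$, a character is determined by its value at $g$, sending $g$ to a primitive $(q-1)$-th root of unity defines a faithful character, and evaluating it at $x=g^k$ with $0<k<q-1$ yields a value $\neq1$. Since this is precisely the classical argument in Ireland--Rosen, I would cite it there and keep the write-up brief.
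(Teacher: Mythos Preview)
Your proof is correct and is exactly the standard argument found in Ireland--Rosen. The paper does not supply its own proof of this lemma at all; it simply cites \cite[Chapter 8]{ireland}, so there is nothing to compare beyond noting that your write-up reproduces that classical proof faithfully.
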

\par We now introduce some properties of Gauss sums. For further details, see \cite{evans} noting that we have adjusted
results to take into account $\varepsilon(0)=0$.
Define the additive character $\theta: \mathbb{F}_q \rightarrow \mathbb{C}^{\times}$ by
\begin{align}
\theta(\alpha)=\zeta_p^{\text{tr}(\alpha)}
\end{align}
where $\zeta_p=e^{2\pi i/p}$ and $\text{tr}: \mathbb{F}_q \rightarrow \mathbb{F}_p$ is the trace map given by
$$\text{tr}(\alpha)=\alpha + \alpha^p + \alpha^{p^2}+ \cdots + \alpha^{p^{e-1}}.$$
For $\chi\in \widehat{\mathbb{F}_q^\times}$, the \emph{Gauss sum} is defined by
\begin{align}
g(\chi):=\sum_{x\in \mathbb{F}_q}\chi(x)\zeta_p^{\text{tr}(x)}=\sum_{x\in \mathbb{F}_q}\chi(x)\theta(x).
\end{align}
It is easy to see that
 $\theta(a+b)=\theta(a)\theta(b)$
and
\begin{align}\label{new-eq-2}
 \sum_{x\in \mathbb{F}_q}\theta(x)=0.
\end{align}
Using \eqref{new-eq-2} one easily finds that $g(\varepsilon)=-1$.
\par
The following lemma provides a formula for the multiplicative inverse of a Gauss sum. Let $T$ be a generator of the cyclic group $\widehat{\mathbb{F}_q^\times}$.
\begin{lemma}\emph{(\cite[Eqn. 1.12]{greene}).}\label{fusi3}
If $k\in\mathbb{Z}$ and $T^k\neq\varepsilon$, then
$$g(T^k)g(T^{-k})=q\cdot T^k(-1).$$
\end{lemma}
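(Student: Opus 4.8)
The plan is to compute the product $g(T^k)g(T^{-k})$ directly from the definition and collapse it using the two orthogonality relations of Lemma \ref{lemma2} together with \eqref{new-eq-2}. Write $\chi=T^k$, so that $T^{-k}=\overline{\chi}$ is the inverse character and $\chi\neq\varepsilon$ by hypothesis. Since $\chi(0)=\overline{\chi}(0)=0$, expanding the definitions and using $\theta(x+y)=\theta(x)\theta(y)$ gives a double sum over $\mathbb{F}_q^{\times}$:
\[
g(\chi)g(\overline{\chi})=\sum_{x\in\mathbb{F}_q^{\times}}\sum_{y\in\mathbb{F}_q^{\times}}\chi(x)\overline{\chi}(y)\,\theta(x+y).
\]

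First I would perform the substitution $y=xu$ with $u\in\mathbb{F}_q^{\times}$, which is a bijection for each fixed $x\neq 0$. Using $\overline{\chi}(xu)=\overline{\chi}(x)\overline{\chi}(u)$ and the fact that $\chi(x)\overline{\chi}(x)=1$ for $x\neq 0$, the variable $x$ decouples from the character and the sum becomes $\sum_{u\in\mathbb{F}_q^{\times}}\overline{\chi}(u)\sum_{x\in\mathbb{F}_q^{\times}}\theta\big(x(1+u)\big)$. The inner sum over $x$ is then evaluated by cases: when $u=-1$ the argument vanishes and $\theta\equiv 1$ contributes $q-1$, while when $u\neq -1$ the map $x\mapsto x(1+u)$ permutes $\mathbb{F}_q^{\times}$, so by \eqref{new-eq-2} the inner sum equals $-1$.

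Substituting these two evaluations leaves $\overline{\chi}(-1)(q-1)-\sum_{u\in\mathbb{F}_q^{\times},\,u\neq -1}\overline{\chi}(u)$. Since $\overline{\chi}\neq\varepsilon$, Lemma \ref{lemma2}(1) gives $\sum_{u\in\mathbb{F}_q^{\times}}\overline{\chi}(u)=0$, so the remaining sum equals $-\overline{\chi}(-1)$; collecting terms yields $g(\chi)g(\overline{\chi})=q\,\overline{\chi}(-1)$. Finally I would note that $\chi(-1)^2=\chi(1)=1$ forces $\chi(-1)=\pm 1$, hence $\overline{\chi}(-1)=\chi(-1)=T^k(-1)$, which is the claimed identity. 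The computation is entirely routine; the only points demanding care are the vanishing of the $x=0$ and $y=0$ terms (handled by $\varepsilon(0)=0$) and the clean separation of the two cases in the inner additive sum, so I do not anticipate any genuine obstacle.
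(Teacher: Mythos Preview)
Your argument is correct and is the standard textbook computation of $g(\chi)g(\overline{\chi})$ for a nontrivial character. Each step checks out: the substitution $y=xu$ is a bijection on $\mathbb{F}_q^{\times}$ for fixed $x\neq 0$, the inner additive sum is handled correctly via \eqref{new-eq-2}, the orthogonality in Lemma~\ref{lemma2}(1) kills the remaining character sum, and the observation $\chi(-1)=\overline{\chi}(-1)$ is immediate from $\chi(-1)^2=1$.

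As for comparison with the paper: the paper does not prove this lemma at all. It is stated with a citation to \cite[Eqn.~1.12]{greene} and used as a known input. So there is no ``paper's own proof'' to compare against; your write-up simply supplies the classical argument that Greene (and any standard reference on Gauss sums) would give.
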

Using orthogonality, we can write $\theta$ in terms of Gauss sums as given in the following lemma.
\begin{lemma}\emph{(\cite[Lemma 2.2]{Fuselier}).}\label{lemma1}
For all $\alpha \in \mathbb{F}_q^{\times}$, $$\theta(\alpha)=\frac{1}{q-1}\sum_{m=0}^{q-2}g(T^{-m})T^m(\alpha).$$
\end{lemma}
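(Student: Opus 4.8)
The plan is to verify the identity by substituting the definition of the Gauss sum into the right-hand side and collapsing the resulting double sum via the second orthogonality relation of Lemma \ref{lemma2}. First I would expand, for each $m$, the Gauss sum as $g(T^{-m})=\sum_{x\in\mathbb{F}_q}T^{-m}(x)\theta(x)$, so that the right-hand side becomes
\begin{align}
\frac{1}{q-1}\sum_{m=0}^{q-2}\sum_{x\in\mathbb{F}_q}T^{-m}(x)\theta(x)T^m(\alpha).\nonumber
\end{align}
Since every multiplicative character vanishes at $0$, the term $x=0$ contributes nothing, so I may restrict the inner sum to $x\in\mathbb{F}_q^{\times}$; both sums are finite, hence I would interchange the order of summation freely.

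Next I would combine the character values. Writing $T^{-m}(x)=T^m(x^{-1})$, the product $T^{-m}(x)T^m(\alpha)$ equals $T^m(\alpha x^{-1})$, which gives
\begin{align}
\frac{1}{q-1}\sum_{x\in\mathbb{F}_q^{\times}}\theta(x)\sum_{m=0}^{q-2}T^m(\alpha x^{-1}).\nonumber
\end{align}
Because $T$ generates $\widehat{\mathbb{F}_q^\times}$, as $m$ ranges over $0,1,\ldots,q-2$ the powers $T^m$ run exactly once through all multiplicative characters of $\mathbb{F}_q$. Hence the inner sum is $\sum_{\chi}\chi(\alpha x^{-1})$, which by part (2) of Lemma \ref{lemma2} equals $q-1$ when $\alpha x^{-1}=1$ and $0$ otherwise.

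Finally, the only surviving term in the outer sum is $x=\alpha$, and the factor $q-1$ cancels the prefactor $\tfrac{1}{q-1}$, leaving exactly $\theta(\alpha)$, as required. This is in effect a Fourier inversion on the cyclic character group $\widehat{\mathbb{F}_q^\times}$, so I do not expect any genuine obstacle; the only points needing care are the harmless vanishing of the $x=0$ term (so that orthogonality applies to a full sum over $\mathbb{F}_q^{\times}$) and the bookkeeping ensuring that $\{T^m : 0\le m\le q-2\}$ is a complete, non-repeating list of the characters of $\mathbb{F}_q$.
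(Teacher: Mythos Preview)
Your argument is correct; it is the standard Fourier inversion on $\widehat{\mathbb{F}_q^{\times}}$ and every step is justified as you describe. Note, however, that the paper does not actually prove this lemma: it is simply quoted as \cite[Lemma 2.2]{Fuselier}, so there is no ``paper's own proof'' to compare against, and your proof is exactly the kind of direct verification one would supply.
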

For $\chi,\psi\in\widehat{\mathbb{F}_q^{\times}}$ we define the Jacobi sum by $J(\chi,\psi):=\sum_{t\in\mathbb{F}_q}\chi(t)
\psi(1-t)$. We will use the following relationship between Gauss and Jacobi sums (for example, see \cite[Eqn 1.14]{greene}).
For $\chi,\psi\in\widehat{\mathbb{F}_q^{\times}}$ not both trivial, we have
\begin{align}\label{lemma6}
J(\chi,\psi)=\left\{
               \begin{array}{ll}
                 \frac{g(\chi)g(\psi)}{g(\chi\psi)}, & \hbox{if $\chi\psi\neq\varepsilon$;} \\
                 -\frac{g(\chi)g(\psi)}{q}, & \hbox{if $\chi\psi=\varepsilon$.}
               \end{array}
             \right.
\end{align}
\begin{lemma}\emph{(\cite[Eqn. 1.14]{greene}).}\label{lemma15}
If $T^{m-n}\neq\varepsilon$, then
$$g(T^m)g(T^{-n})=q\left(\begin{array}{c}
                     T^m \\
                     T^n
                   \end{array}\right)g(T^{m-n})T^n(-1)=J(T^m,T^n)g(T^{m-n}).
$$
\end{lemma}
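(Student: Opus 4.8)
The plan is to deduce the identity from the classical convolution relation between a product of two Gauss sums and a single Jacobi sum, and then to translate the resulting Jacobi sum into the binomial-coefficient notation via the definition of ${A \choose B}$. Setting $\chi=T^m$ and $\psi=T^{-n}$, the hypothesis $T^{m-n}\neq\varepsilon$ is exactly the condition $\chi\psi\neq\varepsilon$, so \eqref{lemma6} already supplies the heart of the statement, namely $g(T^m)g(T^{-n})=J(T^m,T^{-n})\,g(T^{m-n})$. I will nevertheless indicate the self-contained derivation, since \eqref{lemma6} is itself proved by the same computation.

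First I would expand the product directly from the definition of the Gauss sum,
$$g(T^m)\,g(T^{-n})=\sum_{x,y\in\mathbb{F}_q}T^m(x)\,T^{-n}(y)\,\theta(x+y),$$
and then group the terms according to the value of $s=x+y$. The diagonal contribution $s=0$ equals $T^{-n}(-1)\sum_{x}T^{m-n}(x)$, which vanishes by the orthogonality relation Lemma \ref{lemma2}(1) precisely because $T^{m-n}\neq\varepsilon$; this is the only place the hypothesis enters. For each fixed $s\neq0$ I would substitute $x=su$ and $y=s(1-u)$ with $u$ ranging over $\mathbb{F}_q$, so that $x+y=s$ automatically and the pair $(x,y)$ runs over all admissible values bijectively. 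Using multiplicativity, $T^m(x)T^{-n}(y)=T^{m-n}(s)\,T^m(u)T^{-n}(1-u)$, whence the double sum factors as
$$\left(\sum_{s\in\mathbb{F}_q}T^{m-n}(s)\theta(s)\right)\left(\sum_{u\in\mathbb{F}_q}T^m(u)T^{-n}(1-u)\right)=g(T^{m-n})\,J(T^m,T^{-n}).$$

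Finally I would rewrite the Jacobi sum in binomial form. Since $\overline{T^n}=T^{-n}$, the definition of the binomial coefficient gives ${T^m \choose T^n}=\frac{T^n(-1)}{q}\sum_{x}T^m(x)\overline{T^n}(1-x)=\frac{T^n(-1)}{q}J(T^m,T^{-n})$, and because $T^n(-1)^2=T^n(1)=1$ this rearranges to $J(T^m,T^{-n})=q\,T^n(-1){T^m \choose T^n}$. Substituting into the factorization yields the middle expression $q{T^m \choose T^n}g(T^{m-n})T^n(-1)$, and the Jacobi-sum form on the right of the statement is this same product written with the binomial coefficient expanded back into its defining Jacobi sum, equivalently read off from \eqref{lemma6}. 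There is no serious obstacle here: the computation is routine, and the only points demanding care are the vanishing of the $s=0$ term (where $T^{m-n}\neq\varepsilon$ is genuinely needed) and the bookkeeping of the sign factor $T^n(-1)$, using $\overline{T^n}=T^{-n}$ and $T^n(-1)=\pm1$.
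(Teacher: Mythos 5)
Your proof is correct, and in fact the paper offers no proof of Lemma \ref{lemma15} to compare it with: the lemma is quoted from Greene's paper, from the very same equation (1.14) that is cited for \eqref{lemma6}, so it is nothing more than \eqref{lemma6} rewritten in binomial-coefficient notation. Your argument does precisely that rewriting, via $\binom{T^m}{T^n}=\frac{T^n(-1)}{q}J(T^m,T^{-n})$ and $T^n(-1)^2=1$, and in addition supplies the standard self-contained convolution proof of \eqref{lemma6}: the diagonal $x+y=0$ contributes $T^{-n}(-1)\sum_x T^{m-n}(x)=0$ by Lemma \ref{lemma2} (the only place the hypothesis $T^{m-n}\neq\varepsilon$ is used), and for $s=x+y\neq0$ the substitution $x=su$, $y=s(1-u)$ factors the double sum as $g(T^{m-n})\,J(T^m,T^{-n})$, the terms $u=0,1$ vanishing because every character is extended by $\chi(0)=0$. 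All of that is sound, and it buys something the bare citation does not, namely a proof from first principles within the paper's own conventions.

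There is, however, one point you smoothed over that deserves flagging. What you actually derive for the third member is $J(T^m,T^{-n})\,g(T^{m-n})$, whereas the statement as printed reads $J(T^m,T^{n})\,g(T^{m-n})$, and these are different in general: for $q=7$, $T$ the character of order $6$ with $T(3)=e^{\pi i/3}$, and $(m,n)=(1,2)$ (so $T^{m-n}\neq\varepsilon$), one computes $J(T,T^{2})=2+\sqrt{-3}$ while $J(T,T^{-2})=J(T,T^{4})=\frac{1}{2}\left(-5+\sqrt{-3}\right)$. The printed $J(T^m,T^n)$ is thus a misprint for $J(T^m,T^{-n})$; only the latter is consistent with the middle expression (as your own calculation shows), with \eqref{lemma6}, and with Greene's original identity. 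So rather than asserting that your product \emph{is} the right-hand side of the statement, the accurate conclusion is that your derivation proves the corrected statement and exposes the typo.
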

\begin{theorem}\label{thm3}\emph{(\cite[Davenport-Hasse relation]{Lang})}.
Let $p$ be an odd prime and $q=p^e$ for some $e>0$, and let $m$ be a positive integer such that $q\equiv1\pmod{m}$.
For multiplicative characters $\chi,\psi\in\widehat{\mathbb{F}_q^{\times}}$, we have
\begin{align}
\prod_{\chi^m=\varepsilon}g(\chi\psi)=-g(\psi^m)\psi(m^{-m})\prod_{\chi^m=\varepsilon}g(\chi).\notag
\end{align}
\end{theorem}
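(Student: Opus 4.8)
The plan is to prove this product relation for Gauss sums directly from their definition by expanding the left-hand product, performing a multiplicative change of variables that isolates a single Gauss sum $g(\psi^m)$, and then reassembling the leftover multiplicative character sum into $\prod_{\chi^m=\varepsilon}g(\chi)$ together with the normalizing constant $\psi(m^{-m})$. Throughout, I would fix a character $\rho$ of exact order $m$ (available since $q\equiv 1\pmod m$ forces $p\nmid m$), so that $\{\chi:\chi^m=\varepsilon\}=\{\rho^j:0\le j\le m-1\}$ and the factor $g(\varepsilon)=-1$ in $\prod_{\chi^m=\varepsilon}g(\chi)$ is exactly the source of the sign on the right.

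First I would write, using the definition of the Gauss sum and additivity of $\theta$,
$$\prod_{j=0}^{m-1}g(\psi\rho^j)=\sum_{(t_0,\dots,t_{m-1})\in(\mathbb{F}_q^\times)^m}\psi\Big(\prod_{j}t_j\Big)\,\rho\Big(\prod_{j}t_j^{\,j}\Big)\,\theta\Big(\sum_{j}t_j\Big).$$
Then I would substitute $t_j=u s_j$ with $s_0=1$, $u\in\mathbb{F}_q^\times$, $(s_1,\dots,s_{m-1})\in(\mathbb{F}_q^\times)^{m-1}$, which is a bijection that factors the $u$-dependence as $\psi^m(u)\,\rho^{m(m-1)/2}(u)\,\theta(u\sigma)$, where $\sigma=1+s_1+\cdots+s_{m-1}$. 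Summing over $u$ first, using the reindexing $\sum_u\omega(u)\theta(u\sigma)=\overline{\omega}(\sigma)g(\omega)$ for $\sigma\neq0$ (and orthogonality, Lemma \ref{lemma2}, for the degenerate locus $\sigma=0$), with $\omega:=\psi^m\rho^{m(m-1)/2}$, pulls out the leading factor $g(\omega)$. When $m$ is odd one has $\rho^{m(m-1)/2}=\varepsilon$, so $\omega=\psi^m$ and this factor is exactly $g(\psi^m)$; when $m$ is even the stray quadratic character $\rho^{m/2}$ survives and must be tracked.

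The remaining task is to evaluate the leftover $(m-1)$-fold sum $\sum_{s}\overline{\omega}(\sigma)\,\psi(\prod_{j\ge1}s_j)\,\rho(\prod_{j\ge1}s_j^{\,j})$ and to identify it with $\psi(m^{-m})\prod_{j=1}^{m-1}g(\rho^j)$. I would do this by recognizing the product of Gauss sums as an iterated Jacobi sum: repeatedly applying $g(\alpha)g(\beta)=J(\alpha,\beta)g(\alpha\beta)$ from \eqref{lemma6} collapses $\prod_{j=0}^{m-1}g(\psi\rho^j)$ into $g(\omega)$ times a product of $m-1$ Jacobi sums, whose $\psi$-dependence must reduce to the single constant $\psi(m^{-m})$ while the moduli reassemble into $\prod_{j=1}^{m-1}g(\rho^j)$ via the same relation. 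The constant $\psi(m^{-m})$ is the finite-field shadow of the factor $m^{-mz}$ in the classical Gauss multiplication formula $\prod_{j=0}^{m-1}\Gamma(z+\tfrac{j}{m})=(2\pi)^{(m-1)/2}m^{1/2-mz}\Gamma(mz)$, of which this identity is the exact analogue.

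I expect the main obstacle to be the bookkeeping in this last step: pinning down the constant $\psi(m^{-m})$ exactly, correctly treating the degenerate locus $\sigma=0$ (which contributes the isolated factors and, through $g(\varepsilon)=-1$, the overall sign), and — when $m$ is even — reabsorbing the extra quadratic character $\rho^{m/2}$ so that the leading factor becomes $g(\psi^m)$ rather than $g(\psi^m\rho^{m/2})$. A cleaner route for the even case is to establish the quadratic relation ($m=2$) separately and bootstrap from it, but in every case the essential content is the reduction of the whole product to a single Gauss sum $g(\psi^m)$ times a Jacobi-sum product that telescopes back to $\prod_{\chi^m=\varepsilon}g(\chi)$.
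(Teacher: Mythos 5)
The paper offers no proof of this statement to compare against: Theorem \ref{thm3} is the classical Hasse--Davenport product relation, quoted directly from Lang's \emph{Cyclotomic Fields} and used as a black box. So your proposal must stand on its own, and it has a genuine gap at exactly the point where the theorem's content lies.

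Your preparatory steps are correct but routine: expanding $\prod_{j=0}^{m-1}g(\psi\rho^j)$, substituting $t_j=us_j$, and summing over $u$ amounts to the standard identity $\prod_{j=0}^{m-1}g(\psi\rho^j)=J(\psi,\psi\rho,\dots,\psi\rho^{m-1})\,g\bigl(\psi^m\rho^{m(m-1)/2}\bigr)$, where $J$ is the multi-variable Jacobi sum. Everything now rests on the step you defer to ``bookkeeping'': showing that this Jacobi sum equals $\psi(m^{-m})\prod_{j=1}^{m-1}g(\rho^j)$ (with the quadratic correction when $m$ is even). That is not bookkeeping --- it \emph{is} the Hasse--Davenport relation. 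Your proposed mechanism, iterating $g(\alpha)g(\beta)=J(\alpha,\beta)g(\alpha\beta)$ from \eqref{lemma6}, merely rewrites the left side as $g\bigl(\psi^m\rho^{m(m-1)/2}\bigr)\prod_{k=1}^{m-1}J\bigl(\psi^k\rho^{k(k-1)/2},\psi\rho^k\bigr)$; each Jacobi factor still depends on $\psi$ in an essential way, no telescoping mechanism is identified, and asserting that the $\psi$-dependence ``must'' collapse to the single constant $\psi(m^{-m})$, by analogy with the archimedean Gauss multiplication formula, is circular. (You also do not treat the degenerate cases where $\psi$ is itself a power of $\rho$, so that intermediate characters become trivial and the Gauss--Jacobi identities change form.) All known proofs inject real input precisely here: Stickelberger's theorem plus a root-of-unity determination (Lang's route in the cited source), the Gross--Koblitz formula combined with the $\Gamma_p$ multiplication formula (essentially Lemma \ref{lemma8} of this paper --- the completion closest in spirit to your setup, and one you could legitimately invoke), or the function-field $L$-series argument of Davenport--Hasse. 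Finally, your fallback of proving $m=2$ and bootstrapping does not suffice: the composition argument for distribution relations reduces general $m$ only to its prime factors, and while $m=2$ is indeed elementary (complete the square in $t(1-t)$), every odd prime $m$ --- the hard case --- remains untouched by your argument.
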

\subsection{$p$-adic Gamma function, Gross-Koblitz formula and McCarthy's $p$-adic hypergeometric series}
Let $\mathbb{Z}_p$ denote the ring of $p$-adic integers, $\mathbb{Q}_p$ the field of $p$-adic numbers, $\overline{\mathbb{Q}_p}$
the algebraic closure of $\mathbb{Q}_p$, and $\mathbb{C}_p$ the completion of $\overline{\mathbb{Q}_p}$.
It is known that $\mathbb{Z}_p^{\times}$ contains all the $(p-1)$-th roots of unity.
Therefore, we can consider multiplicative characters on $\mathbb{F}_p^\times$
to be maps $\chi: \mathbb{F}_p^{\times} \rightarrow \mathbb{Z}_p^{\times}$.
Let $\omega: \mathbb{F}_p^\times \rightarrow \mathbb{Z}_p^{\times}$ be the Teichm\"{u}ller character.
For $a\in\mathbb{F}_p^\times$, the value $\omega(a)$ is just the $(p-1)$-th root of unity in $\mathbb{Z}_p$ such that $\omega(a)\equiv a \pmod{p}$.
Also, $\widehat{\mathbb{F}_p^{\times}}=\{\omega^j: 0\leq j\leq p-2\}$. Thus, in the $p$-adic setting the Gauss sum $g(\chi)$ takes value in
$\mathbb{Q}_p(\zeta_p)$ for any $\chi \in \widehat{\mathbb{F}_p^{\times}}$.
\par We now recall the definition of $p$-adic gamma function. For further details, see \cite{kob}.
The $p$-adic gamma function $\Gamma_p$ is defined by setting $\Gamma_p(0)=1$, and for positive integer $n$  by
\begin{align}
\Gamma_p(n):=(-1)^n\prod_{\substack{0<j<n\\p\nmid j}}j.\notag
\end{align}
If $x$ and $y$ are two positive integers satisfying $x\equiv y \pmod{p^k\mathbb{Z}}$, then $\Gamma_p(x)\equiv \Gamma_p(y) \pmod{p^k\mathbb{Z}}$.
Therefore, the function
has a unique extension to a continuous function $\Gamma_p: \mathbb{Z}_p \rightarrow \mathbb{Z}_p^{\times}$. If $x\in \mathbb{Z}_p$ and $x\neq 0$, then
$\Gamma_p(x)$ is defined as
\begin{align}
\Gamma_p(x):=\lim_{x_n\rightarrow x}\Gamma_p(x_n),\notag
\end{align}
where $x_n$ runs through any sequence of positive integers $p$-adically approaching $x$. We now introduce Gross-Koblitz formula,
which allows us to relate Gauss sum and the $p$-adic Gamma function. Let $\pi \in \mathbb{C}_p$ be the fixed root of $x^{p-1} + p=0$ which satisfies
$\pi \equiv \zeta_p-1 \pmod{(\zeta_p-1)^2}$.
For $x \in \mathbb{Q}$ we let $\lfloor x\rfloor$ denote the greatest integer less than
or equal to $x$ and $\langle x\rangle$ denote the fractional part of $x$. We have $\langle x\rangle=x-\lfloor x\rfloor$ and $0\leq \langle x\rangle <1$.
Recall that $\overline{\omega}$ denotes the
character inverse of the Teichm\"{u}ller character $\omega$.
\begin{theorem}\emph{(\cite[Gross-Koblitz]{gross}).} For $a\in \mathbb{Z}$, we have
\begin{align}
g(\overline{\omega}^a)=-\pi^{(p-1)\langle\frac{a}{p-1}\rangle}\Gamma_p\left(\left\langle\frac{a}{p-1}\right\rangle\right).\notag
\end{align}
\end{theorem}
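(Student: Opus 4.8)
The statement is the Gross--Koblitz formula, a cited input; I sketch the standard Dwork-theoretic proof. Write $a$ for the exponent and observe first that both sides are periodic of period $p-1$ in $a$: the left side because $\overline{\omega}^{\,a+(p-1)}=\overline{\omega}^{\,a}$, and the right side because $\langle\tfrac{a}{p-1}\rangle$ depends only on $a$ modulo $p-1$. Hence it suffices to treat $0\le a\le p-2$, where $\langle\tfrac{a}{p-1}\rangle=\tfrac{a}{p-1}$. The case $a=0$ is already a consistency check: the left side is $g(\varepsilon)=-1$ (as noted after \eqref{new-eq-2}) and the right side is $-\pi^{0}\Gamma_p(0)=-1$.

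The plan splits into two independent parts, the power of $\pi$ and the unit $\Gamma_p$-factor. For the power of $\pi$ I would invoke Stickelberger's congruence for Gauss sums over $\mathbb{F}_p$, which for $0\le a\le p-2$ gives $v_\pi\big(g(\overline{\omega}^{\,a})\big)=a$ (equivalently $v_p=\tfrac{a}{p-1}$, the base-$p$ digit sum of $a$ divided by $p-1$). Since $\Gamma_p$ takes values in $\mathbb{Z}_p^{\times}$, the right-hand side has the same $\pi$-adic valuation $(p-1)\langle\tfrac{a}{p-1}\rangle=a$, so the theorem reduces to the determination of the unit $g(\overline{\omega}^{\,a})\,\pi^{-a}$.

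For the unit part I would use Dwork's splitting function: writing each $x\in\mathbb{F}_p^{\times}$ through its Teichm\"uller lift, one realizes the additive character by a $p$-adic analytic function $E(X)=\sum_{n\ge0}c_nX^n$ (the Dwork exponential attached to our fixed $\pi\equiv\zeta_p-1$), so that $\theta(x)=E(\omega(x))$ on Teichm\"uller representatives. Substituting into $g(\overline{\omega}^{\,a})=\sum_{x}\omega^{-a}(x)\theta(x)$ and applying the orthogonality of Lemma~\ref{lemma2} over the group of $(p-1)$st roots of unity collapses the double sum to the arithmetic progression $n\equiv a\pmod{p-1}$, giving $g(\overline{\omega}^{\,a})=(p-1)\sum_{n\equiv a}c_n$. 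The coefficients $c_n$ are, up to the relevant power of $\pi$, regularized reciprocals of factorials; summing them over the progression and simplifying the resulting factorials modulo $p$ via Wilson's theorem should reproduce exactly the product defining $\Gamma_p(\tfrac{a}{p-1})$, together with the sign $-1$ and the factor $\pi^{a}$ identified above.

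The main obstacle is precisely this last identification: proving that the $p$-adically regularized coefficient sum over $n\equiv a\pmod{p-1}$ equals $-\Gamma_p(\tfrac{a}{p-1})$. This is the analytic heart of Dwork's method, and it is where the continuity of $\Gamma_p$ and its functional equation $\Gamma_p(x+1)=-x\,\Gamma_p(x)$ (for $p\nmid x$, immediate from the definition of $\Gamma_p$) enter to pin down the product. The Hasse--Davenport relation of Theorem~\ref{thm3} furnishes a useful internal consistency check, since under Gross--Koblitz it becomes the Gauss multiplication formula for $\Gamma_p$. Tracking the exact sign and the power of $\pi$ through the regularization is the delicate bookkeeping, but once the coefficient sum is matched to the gamma product the structural identity of the two sides follows.
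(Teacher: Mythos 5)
The statement you are proving is the Gross--Koblitz formula, which the paper does not prove at all: it is imported verbatim from Gross and Koblitz (the cited Annals paper), so there is no internal proof to compare against. Judged on its own terms, your proposal is an outline of the standard Dwork-theoretic argument rather than a proof, and it has a genuine gap exactly where you say it does. The reduction to $0\le a\le p-2$, the $a=0$ consistency check, and the expansion $g(\overline{\omega}^{\,a})=(p-1)\sum_{n\equiv a\ (\mathrm{mod}\ p-1)}c_n$ via the Dwork splitting function are all correct set-up. But the entire content of the theorem is the step you defer: showing that this regularized coefficient sum equals $-\pi^{a}\,\Gamma_p\bigl(\tfrac{a}{p-1}\bigr)$. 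You label this "the main obstacle" and "the analytic heart of Dwork's method," and then assert that once the coefficient sum is matched to the gamma product the theorem follows --- but matching it \emph{is} the theorem. Nothing in the sketch explains how the limit defining $\Gamma_p$ (limits of partial factorial products $(-1)^n\prod_{0<j<n,\,p\nmid j}j$) emerges from the coefficients of $E(X)=\exp\bigl(\pi(X-X^p)\bigr)$; that identification requires the overconvergence of $E$, its functional equation, and the careful $p$-adic interpolation/induction that constitutes the substance of the Gross--Koblitz (or Dwork/Boyarsky) proof. Invoking Wilson's theorem and "delicate bookkeeping" does not substitute for it.

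A secondary caveat: the valuation step is not as independent as you present it. Stickelberger's congruence over $\mathbb{F}_p$, in the form $g(\overline{\omega}^{\,a})\equiv -\pi^{a}/a!\pmod{\pi^{a+1}}$, is proved by essentially the same coefficient expansion you set up in the unit part, and it already determines the unit modulo $\pi$; quoting it as a black box while leaving the unit identification open means your reduction rests on machinery of the same depth as the target statement. The consistency checks you offer (the $a=0$ case, compatibility of Gross--Koblitz with the Davenport--Hasse relation of Theorem \ref{thm3}) are correct but do not close this gap. As the paper treats the formula as a citation, the honest options are either to do likewise or to carry out the coefficient-sum identification in full; the proposal as written does neither.
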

We also need the following lemma to prove the main results.
\begin{lemma}\label{lemma4}\emph{(\cite[Eqn 3.4, Lemma 3.4]{BS5}).}
For odd prime $p$ and $0<l\leq p-2$, we have
\begin{eqnarray}
\Gamma_p\left(\frac{l}{p-1}\right)\Gamma_p\left(\left\langle 1-\frac{l}{p-1}\right \rangle\right)=-\overline{\omega}^l(-1).\nonumber
\end{eqnarray}
\end{lemma}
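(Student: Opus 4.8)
The plan is to derive the identity directly from the Gross--Koblitz formula together with the Gauss-sum inversion in Lemma \ref{fusi3}, all specialized to $q=p$. First I would record the elementary fractional-part simplifications: since $0<l\le p-2$ we have $0<\frac{l}{p-1}<1$, so $\langle \frac{l}{p-1}\rangle=\frac{l}{p-1}$, and likewise $\langle 1-\frac{l}{p-1}\rangle=1-\frac{l}{p-1}=\frac{p-1-l}{p-1}$, which equals its own fractional part because $1\le p-1-l\le p-2$. Thus the claimed identity is precisely the statement that $\Gamma_p(\frac{l}{p-1})\,\Gamma_p(\frac{p-1-l}{p-1})=-\overline{\omega}^l(-1)$.

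Next I would apply the Gross--Koblitz formula twice. Taking $a=l$ gives $g(\overline{\omega}^l)=-\pi^{l}\,\Gamma_p(\frac{l}{p-1})$, using $(p-1)\langle \frac{l}{p-1}\rangle=l$. Taking $a=p-1-l$ gives $g(\overline{\omega}^{\,p-1-l})=-\pi^{\,p-1-l}\,\Gamma_p(\frac{p-1-l}{p-1})$. Since $\overline{\omega}$ has order $p-1$, we have $\overline{\omega}^{\,p-1-l}=\overline{\omega}^{-l}$, so these two evaluations are exactly the factors $g(\overline{\omega}^l)$ and $g(\overline{\omega}^{-l})$.

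Then I would multiply the two expressions. The gamma factors collect into exactly the product appearing in the lemma, the two leading minus signs cancel, and the $\pi$-powers combine as $\pi^{l}\cdot\pi^{\,p-1-l}=\pi^{p-1}=-p$, using that $\pi$ is a root of $x^{p-1}+p=0$. Hence $g(\overline{\omega}^l)\,g(\overline{\omega}^{-l})=-p\,\Gamma_p(\frac{l}{p-1})\,\Gamma_p(\frac{p-1-l}{p-1})$. On the other hand, $\overline{\omega}$ generates $\widehat{\mathbb{F}_p^{\times}}$ and $\overline{\omega}^l\neq\varepsilon$ because $0<l\le p-2$, so Lemma \ref{fusi3} with $q=p$ and $k=l$ yields $g(\overline{\omega}^l)\,g(\overline{\omega}^{-l})=p\,\overline{\omega}^l(-1)$. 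Equating the two evaluations of the same product and dividing by $-p$ gives the desired identity.

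The computation is short; the only points that need care are the bookkeeping of the fractional parts and the verification that $\overline{\omega}^l\neq\varepsilon$ so that Lemma \ref{fusi3} applies, which is guaranteed by the hypothesis $0<l\le p-2$. I expect no serious obstacle beyond keeping the sign from $\pi^{p-1}=-p$ straight and confirming that the root of unity $\overline{\omega}^l(-1)$ on the right-hand side is the one produced by Lemma \ref{fusi3}.
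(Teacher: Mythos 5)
Your proof is correct. There is nothing in the paper to compare it against: the paper does not prove this lemma but imports it verbatim from \cite[Eqn 3.4, Lemma 3.4]{BS5}, so your argument supplies the self-contained derivation that the paper omits. The details all check out, and the hypothesis $0<l\le p-2$ does exactly the double duty you identify: it makes the fractional parts collapse, $\langle \frac{l}{p-1}\rangle=\frac{l}{p-1}$ and $\langle 1-\frac{l}{p-1}\rangle=\frac{p-1-l}{p-1}$, so that Gross--Koblitz gives $g(\overline{\omega}^{l})=-\pi^{l}\,\Gamma_p\left(\frac{l}{p-1}\right)$ and $g(\overline{\omega}^{-l})=g(\overline{\omega}^{\,p-1-l})=-\pi^{\,p-1-l}\,\Gamma_p\left(\frac{p-1-l}{p-1}\right)$, and it guarantees $\overline{\omega}^{l}\neq\varepsilon$ so that Lemma \ref{fusi3} applies; multiplying the two evaluations and using $\pi^{p-1}=-p$ then forces the stated identity. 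It is also worth noting that this pairing of Gross--Koblitz with Lemma \ref{fusi3} is precisely the toolkit the paper itself deploys in the proof of Lemma \ref{lemma3}, where the analogous cancellation $\pi^{(p-1)\{\langle\frac{l}{p-1}\rangle+\langle\frac{-l}{p-1}\rangle\}}=\pi^{p-1}$ appears, so your argument is entirely in the spirit of the surrounding text. An equally short alternative would be the classical reflection formula for $\Gamma_p$, but your Gauss-sum route has the advantage of relying only on facts already stated in the paper's preliminaries.
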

We now state a product formula for the $p$-adic Gamma function.
\begin{lemma}\label{lemma8}\cite[Lemma 4.1]{mccarthy2}.
Let $p$ be an odd prime. For $0\leq l\leq p-2$ and $t\in \mathbb{Z^+}$ with $p\nmid t$, we have
\begin{eqnarray}
\omega(t^{tl})\Gamma_p\left(\left\langle \frac{tl}{p-1}\right\rangle\right)
\prod_{h=1}^{t-1}\Gamma_p\left(\left\langle\frac{h}{t}\right\rangle\right)
&=&\prod_{h=0}^{t-1}\Gamma_p\left(\left\langle\frac{h}{t}+\frac{l}{p-1}\right\rangle\right),\nonumber\\
\omega(t^{-tl})\Gamma_p\left(\left\langle\frac{-tl}{p-1}\right\rangle\right)
\prod_{h=1}^{t-1}\Gamma_p\left(\left\langle \frac{h}{t}\right\rangle\right)
&=&\prod_{h=0}^{t-1}\Gamma_p\left(\left\langle\frac{(1+h)}{t}-\frac{l}{p-1}\right\rangle\right).\nonumber
\end{eqnarray}
\end{lemma}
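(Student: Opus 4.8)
The identity is the character-theoretic shadow of the Gauss multiplication (distribution) formula for $\Gamma_p$, and the plan is to derive it by feeding the Gross--Koblitz formula into the Davenport--Hasse relation, Theorem \ref{thm3}. Assume first that $p\equiv 1\pmod t$, so that $\mathbb{F}_p^\times$ carries characters of order dividing $t$; write $k=(p-1)/t$. I would apply Theorem \ref{thm3} with $m=t$ and $\psi=\overline{\omega}^{\,l}$. The characters with $\chi^t=\varepsilon$ are exactly $\chi_h=\overline{\omega}^{\,hk}$ for $0\le h\le t-1$, so that $\chi_h\psi=\overline{\omega}^{\,hk+l}$ and $\psi^t=\overline{\omega}^{\,tl}$, while $\psi(t^{-t})=\overline{\omega}^{\,l}(t^{-t})=\omega(t^{tl})$. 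Thus Theorem \ref{thm3} reads
\[
\prod_{h=0}^{t-1} g\!\left(\overline{\omega}^{\,hk+l}\right)
=-\,g\!\left(\overline{\omega}^{\,tl}\right)\omega(t^{tl})\prod_{h=0}^{t-1} g\!\left(\overline{\omega}^{\,hk}\right).
\]

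Next I would replace every Gauss sum by its Gross--Koblitz value $g(\overline{\omega}^{\,a})=-\pi^{(p-1)\langle a/(p-1)\rangle}\Gamma_p(\langle a/(p-1)\rangle)$, using $g(\varepsilon)=-1$ for the $h=0$ term in the last product together with the identifications $\langle (hk+l)/(p-1)\rangle=\langle h/t+l/(p-1)\rangle$ and $\langle hk/(p-1)\rangle=\langle h/t\rangle$. After cancelling the common sign $(-1)^t$, the displayed relation becomes an equality of products of $\Gamma_p$-values multiplied by a power of $\pi$, and the first asserted identity follows once I check that the two $\pi$-exponents agree. This reduces to the elementary distribution relation
\[
\sum_{h=0}^{t-1}\Big\langle \tfrac{h}{t}+\tfrac{l}{p-1}\Big\rangle=\Big\langle\tfrac{tl}{p-1}\Big\rangle+\tfrac{t-1}{2}=\Big\langle\tfrac{tl}{p-1}\Big\rangle+\sum_{h=1}^{t-1}\tfrac{h}{t},
\]
which holds because the numbers $\langle l/(p-1)+h/t\rangle$, $h=0,\dots,t-1$, rearrange $l/(p-1)$ along the progression of step $1/t$. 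Verifying this cancellation of $\pi$-powers, together with the correct reading of the Teichm\"uller factor $\psi(t^{-t})=\omega(t^{tl})$, is the technical heart of the computation.

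For the second identity I would run the same computation with $\psi=\omega^{\,l}$ (that is, $l$ replaced by $-l$): this yields $\prod_{h=0}^{t-1}\Gamma_p(\langle h/t-l/(p-1)\rangle)$ on one side and $\omega(t^{-tl})\Gamma_p(\langle -tl/(p-1)\rangle)\prod_{h=1}^{t-1}\Gamma_p(\langle h/t\rangle)$ on the other. Reindexing $h\mapsto h+1$ and noting that the $h=t$ and $h=0$ terms both equal $\langle 1-l/(p-1)\rangle$ converts $\langle h/t-l/(p-1)\rangle$ into $\langle (1+h)/t-l/(p-1)\rangle$, which is exactly the stated form; alternatively one can pass between the two identities by applying the reflection formula of Lemma \ref{lemma4} termwise.

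The one genuine obstacle is the hypothesis. The argument above literally uses characters of order $t$ on $\mathbb{F}_p^\times$, which exist only when $t\mid p-1$, whereas the lemma is asserted for every $t$ with $p\nmid t$. To cover the general case I would regard the final relation as an instance of the $p$-adic Gauss--Legendre multiplication formula for $\Gamma_p$ --- an identity among $\Gamma_p$ evaluated at rationals with denominators prime to $p$ --- and establish that formula directly from the functional equation $\Gamma_p(x+1)=h_p(x)\Gamma_p(x)$ by the standard continuity argument on $\mathbb{Z}_p$, specialising the free variable to $tl/(p-1)$ and converting the shifted arguments into their fractional parts. The Gauss-sum computation above is precisely the $t\mid p-1$ specialisation of this, and I expect the only delicate book-keeping in the general case to be, once more, the tracking of the Teichm\"uller unit factor that materialises as $\omega(t^{tl})$.
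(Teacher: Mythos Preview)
The paper does not prove this lemma at all: it is quoted verbatim as \cite[Lemma 4.1]{mccarthy2} and used as a black box, so there is no ``paper's own proof'' to compare against. Your sketch is a correct outline of how the result is actually established, and in particular your identification of the key mechanism --- the $p$-adic Gauss multiplication formula for $\Gamma_p$, with the Davenport--Hasse/Gross--Koblitz route as its Gauss-sum incarnation when $t\mid p-1$ --- is exactly right.

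One remark on emphasis: the Davenport--Hasse argument you lead with is the special case, not the general proof, and you correctly flag this. In McCarthy's original the lemma is derived directly from the multiplication formula for $\Gamma_p$ valid for all $t$ with $p\nmid t$ (as in Gross--Koblitz or Koblitz's book), so what you call the ``fallback'' is in fact the primary route. Your exponent-matching identity $\sum_{h=0}^{t-1}\langle h/t+l/(p-1)\rangle=\langle tl/(p-1)\rangle+(t-1)/2$ and the reindexing $h\mapsto h+1$ for the second identity are both fine; the only other bookkeeping point worth spelling out in a full write-up is how the Teichm\"uller factor in the general multiplication formula specialises to $\omega(t^{\pm tl})$ when the argument is $\langle \pm tl/(p-1)\rangle$, since that is where a sign or unit can slip.
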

\par
In \cite{mccarthy3, mccarthy2}, D. McCarthy introduces the notion of hypergeometric series in the $p$-adic setting
which are now famously known as $p$-adic hypergeometric series. The McCarthy's $p$-adic hypergeometric series $_{n}G_{n}[\cdots]$
is defined as follows.
\begin{definition}\cite[Definition 1.1]{mccarthy2} \label{defin1}
Let $p$ be an odd prime and let $t \in \mathbb{F}_p$.
For positive integer $n$ and $1\leq i\leq n$, let $a_i$, $b_i$ $\in \mathbb{Q}\cap \mathbb{Z}_p$.
Then the function $_{n}G_{n}[\cdots]$ is defined by
\begin{align}
&_nG_n\left[\begin{array}{cccc}
             a_1, & a_2, & \ldots, & a_n \\
             b_1, & b_2, & \ldots, & b_n
           \end{array}|t
 \right]:=\frac{-1}{p-1}\sum_{j=0}^{p-2}(-1)^{jn}~~\overline{\omega}^j(t)\notag\\
&\times \prod\limits_{i=1}^n(-p)^{-\lfloor \langle a_i \rangle-\frac{j}{p-1} \rfloor -\lfloor\langle -b_i \rangle +\frac{j}{p-1}\rfloor}
 \frac{\Gamma_p(\langle (a_i-\frac{j}{p-1})\rangle)}{\Gamma_p(\langle a_i \rangle)}
 \frac{\Gamma_p(\langle (-b_i+\frac{j}{p-1}) \rangle)}{\Gamma_p(\langle -b_i \rangle)}.\notag
\end{align}
\end{definition}
\section{Counting points on $Z_{\lambda}: x_1^d+x_2^d=d\lambda x_1x_2^{d-1}$}
In this section, we prove Theorem \ref{point-count} which expresses the number of points over a finite field $\mathbb{F}_p$ on the 0-dimensional variety
$Z_{\lambda}: x_1^d+x_2^d=d\lambda x_1x_2^{d-1}$ in terms of $p$-adic hypergeometric series. We first prove a lemma which will be used to derive
the point count formula.
\begin{lemma}\label{lemma5}
Let $p$ be an odd prime. Then for $0<l\leq p-2$ we have
\begin{align}
\frac{l}{p-1}+\left\langle\frac{(d-1)l}{p-1}\right\rangle+\left\langle\frac{-dl}{p-1}\right\rangle
=1-\sum_{h=1}^{d-1}\lfloor\frac{h}{d}-\frac{l}{p-1}\rfloor
-\sum_{h=1}^{d-2}\lfloor\frac{h}{d-1}+\frac{l}{p-1}\rfloor.\notag
\end{align}
\end{lemma}
\begin{proof}
We have
\begin{align}\label{eq-6}
&\frac{l}{p-1}+\left\langle\frac{(d-1)l}{p-1}\right\rangle+\left\langle\frac{-dl}{p-1}\right\rangle\notag\\
&=\frac{l}{p-1}+\frac{(d-1)l}{p-1}-\frac{dl}{p-1}-\left\lfloor\frac{(d-1)l}{p-1}\right\rfloor-\left\lfloor\frac{-dl}{p-1}\right\rfloor\notag\\
&=-\left\lfloor\frac{(d-1)l}{p-1}\right\rfloor-\left\lfloor\frac{-dl}{p-1}\right\rfloor.
\end{align}
Now, it is enough to prove that
\begin{eqnarray}
\label{eq-8} \left\lfloor\frac{(d-1)l}{p-1}\right\rfloor &=& \sum_{h=1}^{d-2}\left\lfloor\frac{h}{d-1}+\frac{l}{p-1}\right\rfloor,\\
\label{eq-9} \left\lfloor\frac{-dl}{p-1}\right\rfloor &=& \sum_{h=1}^{d-1}\left\lfloor\frac{h}{d}-\frac{l}{p-1}\right\rfloor-1.
\end{eqnarray}
Since $0<\frac{l}{p-1}<1$, we have $0<\frac{(d-1)l}{p-1}<d-1$. Therefore,
$\lfloor\frac{(d-1)l}{p-1}\rfloor \in \{0,1,2,\ldots, d-2\}$. We now prove the lemma by considering some cases.\\
Case 1: If $\lfloor\frac{(d-1)l}{p-1}\rfloor=0$, then $\frac{(d-1)l}{p-1}\neq0$ by the choice of $l$, which yields
$0<\frac{(d-1)l}{p-1}<1$. So, $0<\frac{l}{p-1}<\frac{1}{d-1}$. Therefore,
$\left\lfloor\frac{h}{d-1}+\frac{l}{p-1}\right\rfloor=0$ for $h=1,2,\ldots, d-2$, which gives
\begin{align}
\sum_{h=1}^{d-2}\left\lfloor\frac{h}{d-1}+\frac{l}{p-1}\right\rfloor=0.\notag
\end{align}
Thus, \eqref{eq-8} is true in this case.\\
Case 2: Let $\left\lfloor\frac{(d-1)l}{p-1}\right\rfloor=s$, where $0<s\leq d-2$. Then we have
\begin{align}
s\leq\frac{(d-1)l}{p-1}<s+1,\notag
\end{align}
and this implies
\begin{align}\label{eq-11}
\frac{s}{d-1}\leq\frac{l}{p-1}<\frac{s+1}{d-1}.
\end{align}
Therefore, \eqref{eq-11} implies that whenever $1\leq h\leq d-s-2$ we have $\left\lfloor\frac{h}{d-1}+\frac{l}{p-1}\right\rfloor=0$, which yields
\begin{align}\label{eq-10}
\sum_{h=1}^{d-s-2}\left\lfloor\frac{h}{d-1}+\frac{l}{p-1}\right\rfloor=0.
\end{align}
Also, \eqref{eq-11} implies that for $d-s-1\leq h\leq d-2$, we have
$$\left\lfloor\frac{h}{d-1}+\frac{l}{p-1}\right\rfloor=1,$$ which yields
\begin{align}\label{eq-12}
\sum_{h=d-s-1}^{d-2}\left\lfloor\frac{h}{d-1}+\frac{l}{p-1}\right\rfloor=s.
\end{align}
Combining \eqref{eq-10} and \eqref{eq-12} we find that \eqref{eq-8} is also true in this case. This completes the proof of \eqref{eq-8}.
\par Now, we are going to prove \eqref{eq-9}
using similar arguments. Since $0<\frac{l}{p-1}<1$ in the given range of $l$, so we have
$-d<\frac{-dl}{p-1}<0$. Hence, $\left\lfloor\frac{-dl}{p-1}\right\rfloor \in \{-d,-d+1,\ldots, -1\}$. \\
Case 1: Let $\left\lfloor\frac{-dl}{p-1}\right\rfloor=-d$, then by the choice of $l$, $\frac{-dl}{p-1}\neq-d$, which yields
$-d<\frac{-dl}{p-1}<-d+1$. Thus, we have
\begin{align}\label{eq-13}
-1<\frac{-l}{p-1}<-1+\frac{1}{d}.
\end{align}
Using \eqref{eq-13} we find that $\left\lfloor\frac{h}{d}-\frac{l}{p-1}\right\rfloor=-1$ for $1\leq h\leq d-1$, and this gives
\begin{align}
\sum_{h=1}^{d-1}\left\lfloor\frac{h}{d}-\frac{l}{p-1}\right\rfloor=-(d-1).\notag
\end{align}
Therefore, \eqref{eq-9} is true in this case.\\
Case 2: Let $\left\lfloor\frac{-dl}{p-1}\right\rfloor=-s$, where $s=1,2,\ldots, d-1$. Then we have $-s\leq\frac{-dl}{p-1}<-s+1$, which implies that
\begin{align}\label{eq-14}
\frac{-s}{d}\leq\frac{-l}{p-1}<-\frac{s}{d}+\frac{1}{d}.
\end{align}
Using \eqref{eq-14} we deduce that $\left\lfloor\frac{h}{d}-\frac{l}{p-1}\right\rfloor=-1$ for $1\leq h\leq s-1$
and $\left\lfloor\frac{h}{d}-\frac{l}{p-1}\right\rfloor=0$ for $s\leq h\leq d-1$. Thus, we have
\begin{align}
\sum_{h=1}^{d-1}\left\lfloor\frac{h}{d}-\frac{l}{p-1}\right\rfloor=-(s-1).\notag
\end{align}
Hence, \eqref{eq-9} is also true in this case. Finally, combining \eqref{eq-8} and \eqref{eq-9} we complete the proof of the lemma.
\end{proof}
We now prove the point count formula for the family $Z_{\lambda}: x_1^d+x_2^d=d\lambda x_1x_2^{d-1}$.
\begin{proof}[Proof of Theorem \ref{point-count}]
Let $P(x_1,x_2)=x_1^d+x_2^d-d\lambda x_1x_2^{d-1}$. Let $\#Z_\lambda(\mathbb{F}_p)=\#\{(x_1, x_2)\in \mathbb{F}_p^2: x_1^d+x_2^d=d\lambda x_1x_2^{d-1}\}$
be the number of $\mathbb{F}_p$-points on $Z_{\lambda}$.
If $N_{\mathbb{F}_p}(Z_{\lambda})$ denotes the number of points on $Z_{\lambda}$ in
$\mathbb{P}_{\mathbb{F}_p}^{1}$ then
\begin{eqnarray}\label{new-eq-36}
N_{\mathbb{F}_p}(Z_{\lambda})=\frac{\#Z_\lambda(\mathbb{F}_p)-1}{p-1}.
\end{eqnarray}
Using the identity
\begin{eqnarray}
\sum_{z\in\mathbb{F}_p}\theta(z P(x_1,x_2))=\left\{
                                              \begin{array}{ll}
                                                p, & \hbox{if $P(x_1,x_2)=0$;} \\
                                                0, & \hbox{otherwise,}
                                              \end{array}
                                            \right.
\end{eqnarray}
we have
\begin{eqnarray}\label{eq-1}
p\cdot \#Z_\lambda(\mathbb{F}_p)&=&\sum_{z,x_1,x_2\in \mathbb{F}_p} \theta(z P(x_1,x_2))\nonumber\\
&=&p^2+\sum_{z\in\mathbb{F}_p^{\times}} \theta(0)+\sum_{z,x_1\in\mathbb{F}_p^{\times}}\theta(zx_1^d)
+\sum_{z,x_2\in\mathbb{F}_p^{\times}}\theta(zx_2^d)\nonumber\\
&+&\sum_{z,x_1,x_2\in\mathbb{F}_p^{\times}}\theta(zx_1^d)\theta(zx_2^d)
\theta(-d\lambda zx_1x_2^{d-1})\nonumber\\
&=&p^2+p-1+2\sum_{z,x_1\in\mathbb{F}_p^{\times}}\theta(zx_1^d)+\sum_{z,x_1,x_2\in\mathbb{F}_p^{\times}}\theta(zx_1^d)\theta(zx_2^d)
\theta(-d\lambda zx_1x_2^{d-1})\nonumber\\
&=&p^2+p-1+B+A,
\end{eqnarray}
where $B=2\sum_{z,x_1\in\mathbb{F}_p^{\times}}\theta(zx_1^d)$ and $A=\sum_{z,x_1,x_2\in\mathbb{F}_p^{\times}}\theta(zx_1^d)\theta(zx_2^d)
\theta(-d\lambda zx_1x_2^{d-1}).$ Using Lemma \ref{lemma1} and Lemma \ref{lemma2} we obtain $B=-2(p-1)$.
\par
Again, using Lemma \ref{lemma1} we obtain
\begin{eqnarray}
A&=&\sum_{z,x_1,x_2\in\mathbb{F}_p^{\times}}\theta(zx_1^d)\theta(zx_2^d)\theta(-d\lambda zx_1x_2^{d-1})\nonumber\\
&=&\frac{1}{(p-1)^3}\sum_{z,x_1,x_2\neq0}\sum_{l,m,n=0}^{p-2}g(T^{-l})g(T^{-m})g(T^{-n})T^{l}(zx_1^d)T^{m}(zx_2^d)
T^{n}(-d\lambda zx_1x_2^{d-1})\nonumber\\
&=&\frac{1}{(p-1)^3}\sum_{l,m,n=0}^{p-2}g(T^{-l})g(T^{-m})g(T^{-n})T^{n}(-d\lambda)\sum_{x_1\neq0}T^{dl+n}(x_1)
\nonumber\\
&\times&\sum_{x_2\neq0}T^{dm+(d-1)n}(x_2)\sum_{z\neq0}T^{l+m+n}(z).\nonumber
\end{eqnarray}
From Lemma \ref{lemma2} we observe that the inner sums are non zero only if $n=-dl$ and $m=(d-1)l$. Substituting these values in the above sum we have
\begin{eqnarray}\label{eq-505}
A&=&\sum_{l=0}^{p-2}g(T^{-l})g(T^{-(d-1)l})g(T^{dl})T^{-dl}(-d\lambda).
\end{eqnarray}
Now, taking $T=\omega$ and applying Gross-Koblitz formula we obtain
\begin{eqnarray}
A&=&-\sum_{l=0}^{p-2}\pi^{(p-1)\{\frac{l}{p-1}+\langle\frac{(d-1)l}{p-1}\rangle
+\langle\frac{-dl}{p-1}\rangle\}}\overline{\omega}^{dl}(-d\lambda)\nonumber\\
&\times&\Gamma_p\left(\frac{l}{p-1}\right)
\Gamma_p\left(\left\langle\frac{(d-1)l}{p-1}\right\rangle\right)\Gamma_p\left(\left\langle\frac{-dl}{p-1}\right\rangle\right).\nonumber
\end{eqnarray}
Applying Lemma \ref{lemma8} we deduce that
\begin{eqnarray}\label{eq-2}
\hspace{.5cm}A&=&-\sum_{l=0}^{p-2}\pi^{(p-1)\{\frac{l}{p-1}+\langle\frac{(d-1)l}{p-1}\rangle
+\langle\frac{-dl}{p-1}\rangle\}}~\overline{\omega}^{dl}(-d\lambda)\overline{\omega}^{(d-1)l}(d-1)
\omega^{dl}(d)\nonumber\\
&\times&\Gamma_p\left(\frac{l}{p-1}\right) \frac{\prod_{h=0}^{d-2}\Gamma_p(\langle\frac{h}{d-1}+\frac{l}{p-1}\rangle)}{\prod_{h=1}^{d-2}\Gamma_p(\frac{h}{d-1})}
\frac{\prod_{h=1}^d\Gamma_p(\langle\frac{h}{d}-\frac{l}{p-1}\rangle)}{\prod_{h=1}^{d-1}\Gamma_p(\frac{h}{d})}\nonumber\\
&=&-\sum_{l=0}^{p-2}\pi^{(p-1)\{\frac{l}{p-1}+\langle\frac{(d-1)l}{p-1}\rangle
+\langle\frac{-dl}{p-1}\rangle\}}~\overline{\omega}^{l}((-1)^d\lambda^d(d-1)^{d-1})\Gamma_p\left(\frac{l}{p-1}\right)\nonumber\\
&\times&
\Gamma_p\left(\left\langle1-\frac{l}{p-1}\right\rangle\right)\frac{\prod_{h=0}^{d-2}\Gamma_p(\langle\frac{h}{d-1}+\frac{l}{p-1}\rangle)}{\prod_{h=1}^{d-2}\Gamma_p(\frac{h}{d-1})}
\prod_{h=1}^{d-1}\frac{\Gamma_p(\langle\frac{h}{d}-\frac{l}{p-1}\rangle)}{\Gamma_p(\frac{h}{d})}\nonumber\\
&=&-1-\sum_{l=1}^{p-2}\pi^{(p-1)\{\frac{l}{p-1}+\langle\frac{(d-1)l}{p-1}\rangle
+\langle\frac{-dl}{p-1}\rangle\}}~\overline{\omega}^{l}((-1)^d\lambda^d(d-1)^{d-1})\nonumber\\
&\times&\Gamma_p\left(\frac{l}{p-1}\right)
\Gamma_p\left(\left\langle1-\frac{l}{p-1}\right\rangle\right)\Gamma_p\left(\frac{l}{p-1}\right)\nonumber\\
&\times&\prod_{h=1}^{d-2}\frac{\Gamma_p(\langle\frac{h}{d-1}+\frac{l}{p-1}\rangle)}
{\Gamma_p(\frac{h}{d-1})}\prod_{h=1}^{d-1}\frac{\Gamma_p(\langle\frac{h}{d}-\frac{l}{p-1}\rangle)}{\Gamma_p(\frac{h}{d})}.
\end{eqnarray}
Using Lemma \ref{lemma5} and Lemma \ref{lemma4}, we have
\begin{eqnarray}
A&=&-1+\sum_{l=1}^{p-2}(-p)^{1-\sum_{h=1}^{d-1}\lfloor\frac{h}{d}-\frac{l}{p-1}\rfloor
-\sum_{h=1}^{d-2}\lfloor\frac{h}{d-1}+\frac{l}{p-1}\rfloor}~\overline{\omega}^{l}((-1)^{d-1}\lambda^d(d-1)^{d-1})
\nonumber\\
&\times&\Gamma_p\left(\frac{l}{p-1}\right)\prod_{h=1}^{d-2}\frac{\Gamma_p(\langle\frac{h}{d-1}+\frac{l}{p-1}\rangle)}
{\Gamma_p(\frac{h}{d-1})}\prod_{h=1}^{d-1}\frac{\Gamma_p(\langle\frac{h}{d}-\frac{l}{p-1}\rangle)}
{\Gamma_p(\frac{h}{d})}\nonumber\\
&=&-1-p\sum_{l=1}^{p-2}(-p)^{-\sum_{h=1}^{d-1}\lfloor\frac{h}{d}-\frac{l}{p-1}\rfloor
-\sum_{h=1}^{d-2}\lfloor\frac{h}{d-1}+\frac{l}{p-1}\rfloor}~\overline{\omega}^{l}((-1)^{d-1}\lambda^d(d-1)^{d-1})
\nonumber\\
&\times&\Gamma_p\left(\frac{l}{p-1}\right)\prod_{h=1}^{d-2}\frac{\Gamma_p(\langle\frac{h}{d-1}+\frac{l}{p-1}\rangle)}
{\Gamma_p(\frac{h}{d-1})}\prod_{h=1}^{d-1}\frac{\Gamma_p(\langle\frac{h}{d}-\frac{l}{p-1}\rangle)}
{\Gamma_p(\frac{h}{d})}.\nonumber
\end{eqnarray}
Adding and subtracting the term under summation for $l=0$, we obtain
\begin{eqnarray}
A&&=-1+p-p\sum_{l=0}^{p-2}(-p)^{-\sum_{h=1}^{d-1}\lfloor\frac{h}{d}-\frac{l}{p-1}\rfloor
-\sum_{h=1}^{d-2}\lfloor\frac{h}{d-1}+\frac{l}{p-1}\rfloor}~\overline{\omega}^{l}((-1)^{d-1}\alpha)\nonumber\\
&&\times\Gamma_p\left(\frac{l}{p-1}\right)\prod_{h=1}^{d-2}\frac{\Gamma_p(\langle\frac{h}{d-1}+\frac{l}{p-1}\rangle)}
{\Gamma_p(\frac{h}{d-1})}\prod_{h=1}^{d-1}\frac{\Gamma_p(\langle\frac{h}{d}-\frac{l}{p-1}\rangle)}
{\Gamma_p(\frac{h}{d})}.\nonumber
\end{eqnarray}
Since $\overline{\omega}^{l}(-1)=(-1)^{l}$, we have the following expression for $A$ in terms of the $G$-function.
\begin{eqnarray}\label{eq-3}
A=p-1+p(p-1){_{d-1}G}_{d-1}\left[\begin{array}{cccc}
                                           \frac{1}{d}, & \frac{2}{d}, & \ldots, & \frac{d-1}{d} \\
                                           0, & \frac{1}{d-1}, & \ldots, & \frac{d-2}{d-1}
                                         \end{array}|\alpha
\right],
\end{eqnarray}
where $\alpha=\lambda^d(d-1)^{d-1}$.
Finally, substituting the expressions for $A$ and $B$ in (\ref{eq-1}), and then using \eqref{new-eq-36} we complete the proof.
\end{proof}
\section{Summation identities for the $p$-adic hypergeometric series}
In this section, we prove both the summation identities for the $p$-adic hypergeometric series stated in Theorems \ref{MT1} and \ref{MT2}.
In the following two lemmas, we express certain products
of values of $p$-adic Gamma function in terms of certain character sums.
\begin{lemma}\label{lemma3}
For $1\leq l\leq p-2$ we have
\begin{align}
\frac{(-p)^{-\lfloor\frac{1}{2}+\frac{l}{p-1}\rfloor}
\Gamma_p\left(\langle1-\frac{l}{p-1}\rangle\right)\Gamma_p\left(\langle\frac{1}{2}+\frac{l}{p-1}\rangle\right)}
{\Gamma_p(\frac{1}{2})}=\frac{1}{p}\sum_{t\in \mathbb{F}_p}\overline{\omega}^l(-t)\phi(t(t-1)).\notag
\end{align}
\end{lemma}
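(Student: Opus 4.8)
The plan is to evaluate the right-hand character sum, recognize it as a Jacobi sum, convert that Jacobi sum into a quotient of Gauss sums, and finally apply the Gross--Koblitz formula to recover the left-hand quotient of $p$-adic Gamma values. The key observation throughout is that the quadratic character satisfies $\phi=\overline{\omega}^{(p-1)/2}$, so every Gauss sum that appears has the form $g(\overline{\omega}^a)$ and Gross--Koblitz applies directly.

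First I would manipulate the sum on the right. Writing $\overline{\omega}^l(-t)=\overline{\omega}^l(-1)\overline{\omega}^l(t)$ and $\phi(t(t-1))=\phi(t)\phi(t-1)=\phi(-1)\phi(t)\phi(1-t)$, the full sum becomes $\overline{\omega}^l(-1)\phi(-1)\sum_{t}(\overline{\omega}^l\phi)(t)\phi(1-t)=\overline{\omega}^l(-1)\phi(-1)\,J(\overline{\omega}^l\phi,\phi)$. Since $1\le l\le p-2$ forces $\overline{\omega}^l\ne\varepsilon$, the product of the two characters, namely $(\overline{\omega}^l\phi)\cdot\phi=\overline{\omega}^l$, is non-trivial, so \eqref{lemma6} gives $J(\overline{\omega}^l\phi,\phi)=g(\overline{\omega}^l\phi)g(\phi)/g(\overline{\omega}^l)$. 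Using $\phi=\overline{\omega}^{(p-1)/2}$ this equals $g(\overline{\omega}^{l+(p-1)/2})g(\overline{\omega}^{(p-1)/2})/g(\overline{\omega}^l)$.

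Next I would substitute the Gross--Koblitz formula into each of the three Gauss sums, noting $\langle \frac{l+(p-1)/2}{p-1}\rangle=\langle \frac12+\frac{l}{p-1}\rangle$ and $\langle\frac{l}{p-1}\rangle=\frac{l}{p-1}$ in the given range. The three leading signs multiply to $-1$, and the powers of $\pi$ collapse: using $(p-1)\langle x\rangle=(p-1)x-(p-1)\lfloor x\rfloor$, the total $\pi$-exponent simplifies to $(p-1)\bigl(1-\lfloor \frac12+\frac{l}{p-1}\rfloor\bigr)$, which by $\pi^{p-1}=-p$ becomes $(-p)(-p)^{-\lfloor \frac12+\frac{l}{p-1}\rfloor}$. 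At this stage $J(\overline{\omega}^l\phi,\phi)$ equals a sign times $\Gamma_p(\langle\frac12+\frac{l}{p-1}\rangle)\Gamma_p(\frac12)/\Gamma_p(\frac{l}{p-1})$. Two applications of Lemma \ref{lemma4} then finish the job: the general case rewrites $1/\Gamma_p(\frac{l}{p-1})$ as $-\overline{\omega}^l(-1)\Gamma_p(\langle 1-\frac{l}{p-1}\rangle)$, and the special case $l=(p-1)/2$ of the same reflection formula gives $\Gamma_p(\frac12)^2=-\phi(-1)$, which I use to move one factor $\Gamma_p(\frac12)$ into the denominator. Collecting everything shows $J(\overline{\omega}^l\phi,\phi)=p\,\overline{\omega}^l(-1)\phi(-1)$ times the left-hand side of the lemma, so the prefactor $\frac1p\,\overline{\omega}^l(-1)\phi(-1)$ multiplies it back to the left-hand side because $\bigl(\overline{\omega}^l(-1)\phi(-1)\bigr)^2=1$.

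The main obstacle is purely the bookkeeping of signs and $\pi$-powers: one must track three factors of $-1$ from Gross--Koblitz, the sign $-\overline{\omega}^l(-1)$ from the reflection formula, and the sign $-\phi(-1)$ from $\Gamma_p(\frac12)^2$, and verify that together with the $\overline{\omega}^l(-1)\phi(-1)$ extracted at the start they leave exactly $+1$. It is also worth checking that the boundary value $l=(p-1)/2$, where $\overline{\omega}^l\phi=\varepsilon$ and $\Gamma_p(\langle\frac12+\frac{l}{p-1}\rangle)=\Gamma_p(0)=1$, needs no separate argument, since Gross--Koblitz and $g(\varepsilon)=-1$ remain consistent there.
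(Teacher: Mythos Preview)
Your proof is correct and is essentially the paper's argument run in reverse. The paper starts from the left-hand side, inserts an extra factor $\pi^{(p-1)\{\langle l/(p-1)\rangle+\langle -l/(p-1)\rangle\}}=\pi^{p-1}$ so that Gross--Koblitz turns the $\Gamma_p$-quotient directly into $-g(\phi\overline{\omega}^l)g(\omega^l)/(\pi^{p-1}g(\phi))$, and then uses Lemma~\ref{fusi3} (the relation $g(T^k)g(T^{-k})=qT^k(-1)$) twice---once for $\omega^l$ and once for $\phi$---to reach $\frac{1}{p}\phi\overline{\omega}^l(-1)J(\phi\overline{\omega}^l,\phi)$, which is then expanded as the character sum. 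You instead begin with the character sum, identify the same Jacobi sum, convert to Gauss sums, apply Gross--Koblitz, and then use Lemma~\ref{lemma4} (the $\Gamma_p$ reflection formula) twice---once for general $l$ and once as $\Gamma_p(\tfrac12)^2=-\phi(-1)$---to reach the left side. Since Lemma~\ref{lemma4} is precisely Lemma~\ref{fusi3} translated through Gross--Koblitz, the two arguments use the same ingredients in different packaging; the paper's ordering is marginally cleaner in that the sign bookkeeping is absorbed into two applications of the Gauss-sum identity rather than two separate $\Gamma_p$ reflections.
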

\begin{proof}
We have
\begin{eqnarray}
&&\frac{(-p)^{-\lfloor\frac{1}{2}+\frac{l}{p-1}\rfloor}
\Gamma_p\left(\langle1-\frac{l}{p-1}\rangle\right)\Gamma_p\left(\langle\frac{1}{2}+\frac{l}{p-1}\rangle\right)}
{\Gamma_p(\frac{1}{2})}\nonumber\\
&&=\frac{\pi^{-(p-1)\lfloor\frac{1}{2}+\frac{l}{p-1}\rfloor}
\Gamma_p\left(\langle1-\frac{l}{p-1}\rangle\right)\Gamma_p\left(\langle\frac{1}{2}+\frac{l}{p-1}\rangle\right)}
{\Gamma_p(\frac{1}{2})}\nonumber\\
&&=\frac{(\pi)^{-(p-1)(\frac{1}{2}+\frac{l}{p-1})+(p-1)\langle\frac{1}{2}+\frac{l}{p-1}\rangle}
\Gamma_p\left(\langle1-\frac{l}{p-1}\rangle\right)\Gamma_p\left(\langle\frac{1}{2}+\frac{l}{p-1}\rangle\right)}
{\Gamma_p(\frac{1}{2})}\nonumber\\
&&=\frac{\pi^{(p-1)\langle\frac{1}{2}+\frac{l}{p-1}\rangle}\Gamma_p\left(\langle\frac{1}{2}+\frac{l}{p-1}\rangle\right)
\pi^{(p-1)\langle\frac{-l}{p-1}\rangle}\Gamma_p\left(\langle\frac{-l}{p-1}\rangle\right)}
{\pi^{(p-1)(\frac{1}{2})}\Gamma_p(\frac{1}{2})\pi^{(p-1)\{\langle\frac{l}{p-1}\rangle + \langle\frac{-l}{p-1}\rangle\}}}.\nonumber
\end{eqnarray}
Using Gross-Koblitz formula we find that
\begin{eqnarray}\label{neweqn1}
\frac{(-p)^{-\lfloor\frac{1}{2}+\frac{l}{p-1}\rfloor}
\Gamma_p\left(\langle1-\frac{l}{p-1}\rangle\right)\Gamma_p\left(\langle\frac{1}{2}+\frac{l}{p-1}\rangle\right)}
{\Gamma_p(\frac{1}{2})}&=&\frac{-g(\phi\overline{\omega}^l)g(\omega^l)}{\pi^{(p-1)}g(\phi)}.
\end{eqnarray}
Since $1\leq l\leq p-2$, Lemma \ref{fusi3} gives $g(\omega^l)g(\overline{\omega}^l)=p \overline{\omega}^l(-1)$. Then \eqref{neweqn1} reduces to
\begin{eqnarray}\label{eq-15}
&&\frac{(-p)^{-\lfloor\frac{1}{2}+\frac{l}{p-1}\rfloor}
\Gamma_p\left(\langle1-\frac{l}{p-1}\rangle\right)\Gamma_p\left(\langle\frac{1}{2}+\frac{l}{p-1}\rangle\right)}
{\Gamma_p(\frac{1}{2})}\nonumber\\
&&=\frac{-\phi\overline{\omega}^l(-1)g(\phi\overline{\omega}^l)g(\phi)}
{\pi^{p-1}g(\overline{\omega}^l)}\nonumber\\
&&=\frac{1}{p}\frac{\phi\overline{\omega}^l(-1)g(\phi\overline{\omega}^l)g(\phi)}
{g(\overline{\omega}^l)}.
\end{eqnarray}
Now, using \eqref{lemma6} we deduce that
\begin{eqnarray}\label{eq-16}
\frac{\phi\overline{\omega}^l(-1)}{p}\frac{g(\phi\overline{\omega}^l)g(\phi)}
{g(\overline{\omega}^l)}&=&\frac{\phi\overline{\omega}^l(-1)}{p}J(\phi\overline{\omega}^l,\phi)\nonumber\\
&=&\frac{\phi\overline{\omega}^l(-1)}{p}\sum_{t \in \mathbb{F}_p}\phi\overline{\omega}^l(t)\phi(1-t)\nonumber\\
&=&\frac{1}{p}\sum_{t \in \mathbb{F}_p}\phi(t(t-1))\overline{\omega}^l(-t).
\end{eqnarray}
Finally, combining \eqref{eq-15} and \eqref{eq-16} we obtain the desired result.
\end{proof}
\begin{lemma}\label{lemma7}
Let $0\leq l\leq p-2$. Then we have
\begin{align}\label{eq-17}
\frac{(-p)^{-\lfloor\frac{1}{2}-\frac{l}{p-1}\rfloor}
\Gamma_p\left(\langle\frac{l}{p-1}\rangle\right)\Gamma_p\left(\langle\frac{1}{2}-\frac{l}{p-1}\rangle\right)}
{\Gamma_p(\frac{1}{2})}=-\sum_{t \in \mathbb{F}_p}\omega^l(-t)\phi(t(t-1)).
\end{align}
\end{lemma}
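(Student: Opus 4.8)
The statement to prove is Lemma~\ref{lemma7}, which is the exact analogue of Lemma~\ref{lemma3} but with $\frac{l}{p-1}$ replaced by $-\frac{l}{p-1}$ throughout the $p$-adic Gamma factors, and with $\overline{\omega}^l$ replaced by $\omega^l$ in the resulting character sum. The proof should therefore run in strict parallel to that of Lemma~\ref{lemma3}. The plan is to rewrite the left-hand side as a ratio of Gauss sums via the Gross--Koblitz formula, apply Lemma~\ref{fusi3} to eliminate one Gauss sum, convert the remaining quotient into a Jacobi sum using \eqref{lemma6}, and finally expand that Jacobi sum into a character sum over $\mathbb{F}_p$.

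First I would absorb the power of $(-p)$ into a power of $\pi$ using $(-p)^{-\lfloor\frac{1}{2}-\frac{l}{p-1}\rfloor}=\pi^{-(p-1)\lfloor\frac{1}{2}-\frac{l}{p-1}\rfloor}$, and then split the exponent as $-(p-1)(\tfrac12-\tfrac{l}{p-1})+(p-1)\langle\tfrac12-\tfrac{l}{p-1}\rangle$. Pairing each $\pi^{(p-1)\langle\,\cdot\,\rangle}\Gamma_p(\langle\,\cdot\,\rangle)$ factor as $-g(\overline{\omega}^a)$ via Gross--Koblitz, the three Gamma factors $\Gamma_p(\langle\frac{l}{p-1}\rangle)$, $\Gamma_p(\langle\frac12-\frac{l}{p-1}\rangle)$ and $\Gamma_p(\frac12)$ should assemble into a ratio of the form
\begin{align}
\frac{-g(\phi\omega^l)g(\overline{\omega}^l)}{\pi^{p-1}g(\phi)},\nonumber
\end{align}
mirroring \eqref{neweqn1}. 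Here I expect the character in the numerator to be $\phi\omega^l$ (rather than $\phi\overline{\omega}^l$) precisely because of the sign flip on $\frac{l}{p-1}$, and this is the source of the $\omega^l$ that ultimately appears in the final character sum.

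Next I would invoke Lemma~\ref{fusi3} in the form $g(\omega^l)g(\overline{\omega}^l)=p\,\omega^l(-1)$, valid for $1\le l\le p-2$, to replace $g(\overline{\omega}^l)$ and clear the $\pi^{p-1}=-p$ factor, reducing the expression to $\tfrac{\phi\omega^l(-1)}{p}\cdot\frac{g(\phi\omega^l)g(\phi)}{g(\omega^l)}$ up to sign. Then \eqref{lemma6} identifies $\frac{g(\phi\omega^l)g(\phi)}{g(\omega^l)}$ with the Jacobi sum $J(\phi\omega^l,\phi)$ (noting $\phi\omega^l\cdot\phi=\omega^l\neq\varepsilon$ in this range, so the first branch of \eqref{lemma6} applies), and expanding the Jacobi sum gives $\sum_t \phi\omega^l(t)\phi(1-t)=\sum_t\phi(t(t-1))\omega^l(-t)$ after folding $\phi\omega^l(-1)$ into the summand exactly as in \eqref{eq-16}. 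The bookkeeping of signs and the single factor of $\tfrac1p$ cancelling against $\pi^{p-1}$ should reproduce the stated right-hand side $-\sum_t\omega^l(-t)\phi(t(t-1))$.

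**Main obstacle.** The only genuine subtlety is the boundary case $l=0$, since the lemma is stated for the full range $0\le l\le p-2$ whereas Lemma~\ref{fusi3} requires $\omega^l\neq\varepsilon$ and is therefore unavailable at $l=0$. The hard part will be verifying the identity separately at $l=0$: there the left-hand side becomes $\frac{\Gamma_p(0)\Gamma_p(\frac12)}{\Gamma_p(\frac12)}=\Gamma_p(0)=1$ (using $\lfloor\frac12\rfloor=0$), while the right-hand side is $-\sum_t\phi(-t)\phi(t(t-1))=-\sum_t\phi(-t^2(t-1))=-\phi(-1)\sum_{t\neq0}\phi(1-t)$, which evaluates to $-\phi(-1)\cdot(-1)=\phi(-1)$ by the orthogonality relation in Lemma~\ref{lemma2}; I would check this matches. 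For $1\le l\le p-2$ the argument above goes through verbatim in parallel with Lemma~\ref{lemma3}, so aside from this endpoint check and careful tracking of the fractional-part decomposition $\langle\frac{-l}{p-1}\rangle=\langle 1-\frac{l}{p-1}\rangle$, no new difficulty arises.
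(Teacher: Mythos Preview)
Your approach is exactly that of the paper: run the Gross--Koblitz/Jacobi-sum argument in parallel with Lemma~\ref{lemma3} for $1\le l\le p-2$, and verify $l=0$ separately. Two slips need correcting, however.

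First, your $l=0$ check is wrong: $\omega^0=\varepsilon$, not $\phi$, so the right-hand side at $l=0$ is $-\sum_{t}\varepsilon(-t)\phi(t(t-1))=-\sum_t\phi(t(t-1))$. This sum equals $-1$ (write it as $\phi(-1)J(\phi,\phi)$ and use \eqref{lemma6} together with Lemma~\ref{fusi3}, exactly as the paper does), so the right-hand side is $1$, matching the left. Your value $\phi(-1)$ would be wrong for $p\equiv3\pmod4$.

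Second, the Gauss-sum ratio you should obtain is $-\dfrac{g(\overline{\omega}^l)g(\phi\omega^l)}{g(\phi)}$, \emph{without} the extra $\pi^{p-1}$ in the denominator. The difference from \eqref{neweqn1} is that here the first Gamma factor is $\Gamma_p\bigl(\langle\tfrac{l}{p-1}\rangle\bigr)$ rather than $\Gamma_p\bigl(\langle 1-\tfrac{l}{p-1}\rangle\bigr)$; if you track the $\pi$-exponents carefully the leftover power of $\pi^{p-1}$ vanishes. This is exactly why the right-hand side of Lemma~\ref{lemma7} carries no factor $\tfrac{1}{p}$, whereas Lemma~\ref{lemma3} does. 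With these two corrections in place, your argument goes through and coincides with the paper's.
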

\begin{proof}
If we put $l=0$ in both the sides of \eqref{eq-17} then we obtain that the left hand side is 1 and the right hand side is equal to
$-\sum_{t\in \mathbb{F}_p}\phi(t(t-1))$. Using \eqref{lemma6} and Lemma \ref{fusi3}, we easily find that
$\sum_{t\in \mathbb{F}_p}\phi(t(t-1))=-1$, and hence the right hand side of \eqref{eq-17} is also 1. Thus, \eqref{eq-17} is true for $l=0$.
For $1\leq l\leq p-2$, the proof proceeds along similar lines to the proof of Lemma \ref{lemma3} so we omit the details for reasons of brevity.
\end{proof}
\begin{proof}[Proof of Theorem \ref{MT1}]
For $x\in\mathbb{F}_p^{\times}$, we consider the sum
\begin{eqnarray}\label{eq-603}
A_x&=&-1-\sum_{l=1}^{p-2}\pi^{(p-1)\{\frac{l}{p-1}+\langle\frac{(d-1)l}{p-1}\rangle
+\langle\frac{-dl}{p-1}\rangle\}}~\overline{\omega}^{l}(-x)\Gamma_p\left(\frac{l}{p-1}\right)
\Gamma_p\left(\langle1-\frac{l}{p-1}\rangle\right)\nonumber\\
&\times&\Gamma_p\left(\frac{l}{p-1}\right)\prod_{h=1}^{d-2}\frac{\Gamma_p(\langle\frac{h}{d-1}+\frac{l}{p-1}\rangle)}
{\Gamma_p(\frac{h}{d-1})}\prod_{h=1}^{d-1}\frac{\Gamma_p(\langle\frac{h}{d}-\frac{l}{p-1}\rangle)}{\Gamma_p(\frac{h}{d})}.
\end{eqnarray}
Since $d$ is odd, the term $A$ given in \eqref{eq-2} is equal to $A_{\alpha}$ with $\alpha=\lambda^d(d-1)^{d-1}$.
Thus, proceeding similarly as shown in the proof of Theorem \ref{point-count} we deduce that
\begin{eqnarray}\label{eq-604}
A_x=p-1+p(p-1){_{d-1}G}_{d-1}\left[\begin{array}{cccc}
                                           \frac{1}{d}, & \frac{2}{d}, & \ldots, & \frac{d-1}{d} \vspace{.1cm}\\
                                           0, & \frac{1}{d-1}, & \ldots, & \frac{d-2}{d-1}
                                         \end{array}|x\right].
\end{eqnarray}
Also,
\begin{eqnarray}
A_x&=&-1-\sum_{l=1}^{p-2}\pi^{(p-1)\{\frac{l}{p-1}+\langle\frac{(d-1)l}{p-1}\rangle
+\langle\frac{-dl}{p-1}\rangle\}}~\overline{\omega}^{l}(-x)\Gamma_p\left(\frac{l}{p-1}\right)
\Gamma_p\left(\langle1-\frac{l}{p-1}\rangle\right)\nonumber\\
&\times&\Gamma_p\left(\frac{l}{p-1}\right)\prod_{h=1}^{d-2}\frac{\Gamma_p(\langle\frac{h}{d-1}+\frac{l}{p-1}\rangle)}
{\Gamma_p(\frac{h}{d-1})}\prod_{h=1}^{d-1}\frac{\Gamma_p(\langle\frac{h}{d}-\frac{l}{p-1}\rangle)}{\Gamma_p(\frac{h}{d})}.\nonumber\\
&=&-1-\sum_{l=1}^{p-2}\pi^{(p-1)\{\frac{l}{p-1}+\langle\frac{(d-1)l}{p-1}\rangle
+\langle\frac{-dl}{p-1}\rangle\}}~\overline{\omega}^{l}(-x)\Gamma_p\left(\frac{l}{p-1}\right)
\Gamma_p\left(\frac{l}{p-1}\right)\nonumber\\
&\times&\frac{\Gamma_p\left(\langle1-\frac{l}{p-1}\rangle\right)\Gamma_p\left(\langle\frac{1}{2}+\frac{l}{p-1}\rangle\right)}
{\Gamma_p(\frac{1}{2})}
\prod_{\substack{h=1\\h\neq\frac{d-1}{2}}}^{d-2}\frac{\Gamma_p(\langle\frac{h}{d-1}+\frac{l}{p-1}\rangle)}
{\Gamma_p(\frac{h}{d-1})}\prod_{h=1}^{d-1}\frac{\Gamma_p(\langle\frac{h}{d}-\frac{l}{p-1}\rangle)}{\Gamma_p(\frac{h}{d})}.\nonumber
\end{eqnarray}
Using  Lemma \ref{lemma5} we have
\begin{eqnarray}
A_x&=&-1-\sum_{l=1}^{p-2}(-p)^{1-\sum_{h=1}^{d-1}\lfloor\frac{h}{d}-\frac{l}{p-1}\rfloor
-\sum_{h=1}^{d-2}\lfloor\frac{h}{d-1}+\frac{l}{p-1}\rfloor}~\overline{\omega}^{l}(-x)\nonumber\\
&\times&\frac{\Gamma_p\left(\langle1-\frac{l}{p-1}\rangle\right)\Gamma_p\left(\langle\frac{1}{2}+\frac{l}{p-1}\rangle\right)}
{\Gamma_p(\frac{1}{2})}
\Gamma_p\left(\frac{l}{p-1}\right)\Gamma_p\left(\frac{l}{p-1}\right)\nonumber\\
&\times&\prod_{\substack{h=1\\h\neq\frac{d-1}{2}}}^{d-2}\frac{\Gamma_p(\langle\frac{h}{d-1}+\frac{l}{p-1}\rangle)}
{\Gamma_p(\frac{h}{d-1})}\prod_{h=1}^{d-1}\frac{\Gamma_p(\langle\frac{h}{d}-\frac{l}{p-1}\rangle)}
{\Gamma_p(\frac{h}{d})}.\nonumber\\
&=&-1-\sum_{l=1}^{p-2}(-p)^{1-\sum_{h=1}^{d-1}\lfloor\frac{h}{d}-\frac{l}{p-1}\rfloor
-\sum_{\substack{h=1\\h\neq\frac{d-1}{2}}}^{d-2}\lfloor\frac{h}{d-1}+\frac{l}{p-1}\rfloor}
~\overline{\omega}^{l}(-x)\nonumber\\
&\times&\frac{(-p)^{-\lfloor\frac{1}{2}+\frac{l}{p-1}\rfloor}
\Gamma_p\left(\langle1-\frac{l}{p-1}\rangle\right)\Gamma_p\left(\langle\frac{1}{2}+\frac{l}{p-1}\rangle\right)}
{\Gamma_p(\frac{1}{2})}\Gamma_p\left(\frac{l}{p-1}\right)\Gamma_p\left(\frac{l}{p-1}\right)\nonumber\\
&\times&\prod_{\substack{h=1\\h\neq\frac{d-1}{2}}}^{d-2}\frac{\Gamma_p(\langle\frac{h}{d-1}+\frac{l}{p-1}\rangle)}
{\Gamma_p(\frac{h}{d-1})}\prod_{h=1}^{d-1}\frac{\Gamma_p(\langle\frac{h}{d}-\frac{l}{p-1}\rangle)}
{\Gamma_p(\frac{h}{d})}.\nonumber
\end{eqnarray}
Lemma \ref{lemma3} yields
\begin{eqnarray}
A_x&=&-1+\sum_{l=1}^{p-2}(-p)^{-\sum_{h=1}^{d-1}\lfloor\frac{h}{d}-\frac{l}{p-1}\rfloor
-\sum_{\substack{h=1\\h\neq\frac{d-1}{2}}}^{d-2}\lfloor\frac{h}{d-1}+\frac{l}{p-1}\rfloor}
~\overline{\omega}^{l}(-x)\nonumber\\
&\times&\sum_{t\in \mathbb{F}_p}\overline{\omega}^l(-t)\phi(t(t-1))\Gamma_p\left(\frac{l}{p-1}\right)\Gamma_p\left(\frac{l}{p-1}\right)\nonumber\\
&\times&\prod_{\substack{h=1\\h\neq\frac{d-1}{2}}}^{d-2}\frac{\Gamma_p(\langle\frac{h}{d-1}+\frac{l}{p-1}\rangle)}
{\Gamma_p(\frac{h}{d-1})}\prod_{h=1}^{d-1}\frac{\Gamma_p(\langle\frac{h}{d}-\frac{l}{p-1}\rangle)}
{\Gamma_p(\frac{h}{d})}.\nonumber
\end{eqnarray}
The term under summation for $l=0$ is $\sum_{t\in \mathbb{F}_p}\phi(t(t-1))$. Using \eqref{lemma6} and Lemma \ref{fusi3}, we easily find that
$\sum_{t\in \mathbb{F}_p}\phi(t(t-1))=-1$. Thus,
\begin{eqnarray}
&&\hspace{-.6cm}A_x=\sum_{t \in \mathbb{F}_p}\phi(t(t-1))\sum_{l=0}^{p-2}(-p)^{-\sum_{h=1}^{d-1}\lfloor\frac{h}{d}-\frac{l}{p-1}\rfloor
-\sum_{\substack{h=1\\h\neq\frac{d-1}{2}}}^{d-2}\lfloor\frac{h}{d-1}+\frac{l}{p-1}\rfloor}
~\overline{\omega}^{l}(xt)\nonumber\\
&&\times\Gamma_p\left(\frac{l}{p-1}\right)\Gamma_p\left(\frac{l}{p-1}\right)
\prod_{\substack{h=1\\h\neq\frac{d-1}{2}}}^{d-2}\frac{\Gamma_p(\langle\frac{h}{d-1}+\frac{l}{p-1}\rangle)}
{\Gamma_p(\frac{h}{d-1})}\prod_{h=1}^{d-1}\frac{\Gamma_p(\langle\frac{h}{d}-\frac{l}{p-1}\rangle)}
{\Gamma_p(\frac{h}{d})}\nonumber\\
&&=-(p-1)\sum_{t \in \mathbb{F}_p}\phi(t(t-1))\times \nonumber\\
&&{_{d-1}G}_{d-1}\left[\begin{array}{cccccccccc}
                       \frac{1}{d}, \hspace{-.2cm} & \frac{2}{d}, \hspace{-.2cm} & \ldots, \hspace{-.2cm} & \frac{\frac{d-1}{2}-1}{d},
                       \hspace{-.2cm}& \frac{\frac{d-1}{2}}{d}, \hspace{-.2cm}
 & \frac{\frac{d-1}{2}+1}{d}, \hspace{-.2cm} & \ldots, \hspace{-.2cm} & \frac{d-3}{d}, \hspace{-.2cm} & \frac{d-2}{d}, \hspace{-.2cm} & \frac{d-1}{d}
 \vspace{.1cm}\\
 \frac{1}{d-1}, \hspace{-.2cm} & \frac{2}{d-1}, & \hspace{-.2cm}\ldots, \hspace{-.2cm} & \frac{\frac{d-1}{2}-1}{d-1},
 \hspace{-.2cm} & \frac{\frac{d-1}{2}+1}{d-1}, \hspace{-.2cm} & \frac{\frac{d-1}{2}+2}{d-1}, & \hspace{-.2cm} \ldots, \hspace{-.2cm} & \frac{d-2}{d-1}, \hspace{-.2cm}
 & 0, \hspace{-.2cm} &0
                     \end{array}|xt
\right].\nonumber
\end{eqnarray}
Finally, combining \eqref{eq-604} and the above expression for $A_x$ we derive the required summation identity.
\end{proof}
\begin{proof}[Proof of Theorem \ref{MT2}]
For $x\in\mathbb{F}_p^{\times}$, we consider the sum
\begin{eqnarray}\label{eq-610}
A_x&=&-1-\sum_{l=1}^{p-2}\pi^{(p-1)\{\frac{l}{p-1}+\langle\frac{(d-1)l}{p-1}\rangle
+\langle\frac{-dl}{p-1}\rangle\}}~\overline{\omega}^{l}(x)\Gamma_p\left(\frac{l}{p-1}\right)
\Gamma_p\left(\langle1-\frac{l}{p-1}\rangle\right)\nonumber\\
&\times&\Gamma_p\left(\frac{l}{p-1}\right)\prod_{h=1}^{d-2}\frac{\Gamma_p(\langle\frac{h}{d-1}+\frac{l}{p-1}\rangle)}
{\Gamma_p(\frac{h}{d-1})}\prod_{h=1}^{d-1}\frac{\Gamma_p(\langle\frac{h}{d}-\frac{l}{p-1}\rangle)}{\Gamma_p(\frac{h}{d})}.
\end{eqnarray}
Since $d$ is even, the term $A$ given in \eqref{eq-2} is equal to $A_{\alpha}$ with $\alpha=\lambda^d(d-1)^{d-1}$.
Thus, proceeding similarly as shown in the proof of Theorem \ref{point-count} we deduce that
\begin{eqnarray}\label{eq-611}
A_x=p-1+p(p-1){_{d-1}G}_{d-1}\left[\begin{array}{cccc}
                                           \frac{1}{d}, & \frac{2}{d}, & \ldots, & \frac{d-1}{d} \\
                                           0, & \frac{1}{d-1}, & \ldots, & \frac{d-2}{d-1}
                                         \end{array}|x\right].
\end{eqnarray}
Also,
\begin{eqnarray}
A_x&=&-1-\sum_{l=1}^{p-2}\pi^{(p-1)\{\frac{l}{p-1}+\langle\frac{(d-1)l}{p-1}\rangle
+\langle\frac{-dl}{p-1}\rangle\}}~\overline{\omega}^{l}(x)\Gamma_p\left(\frac{l}{p-1}\right)
\Gamma_p\left(\langle1-\frac{l}{p-1}\rangle\right)\nonumber\\
&\times&\Gamma_p\left(\frac{l}{p-1}\right)\prod_{h=1}^{d-2}\frac{\Gamma_p(\langle\frac{h}{d-1}+\frac{l}{p-1}\rangle)}
{\Gamma_p(\frac{h}{d-1})}\prod_{h=1}^{d-1}\frac{\Gamma_p(\langle\frac{h}{d}-\frac{l}{p-1}\rangle)}
{\Gamma_p(\frac{h}{d})}\nonumber\\
&=&-1-\sum_{l=1}^{p-2}\pi^{(p-1)\{\frac{l}{p-1}+\langle\frac{(d-1)l}{p-1}\rangle
+\langle\frac{-dl}{p-1}\rangle\}}~\overline{\omega}^{l}(x)\Gamma_p\left(\frac{l}{p-1}\right)
\Gamma_p\left(\langle1-\frac{l}{p-1}\rangle\right)\nonumber\\
&\times&\frac{\Gamma_p\left(\frac{l}{p-1}\right)\Gamma_p(\langle\frac{1}{2}-\frac{l}{p-1}\rangle)}{\Gamma_p(\frac{1}{2})}
\prod_{h=1}^{d-2}\frac{\Gamma_p(\langle\frac{h}{d-1}+\frac{l}{p-1}\rangle)}
{\Gamma_p(\frac{h}{d-1})}\prod_{\substack{h=1\\h\neq\frac{d}{2}}}^{d-1}\frac{\Gamma_p(\langle\frac{h}{d}-\frac{l}{p-1}\rangle)}
{\Gamma_p(\frac{h}{d})}.\nonumber
\end{eqnarray}
We now apply Lemma \ref{lemma5} and Lemma \ref{lemma4} to obtain
\begin{eqnarray}
A_x&=&-1+\sum_{l=1}^{p-2}(-p)^{1-\sum_{h=1}^{d-1}\lfloor\frac{h}{d}-\frac{l}{p-1}\rfloor
-\sum_{h=1}^{d-2}\lfloor\frac{h}{d-1}+\frac{l}{p-1}\rfloor}
~\overline{\omega}^{l}(-x)\nonumber\\
&\times&
\frac{\Gamma_p\left(\frac{l}{p-1}\right)\Gamma_p(\langle\frac{1}{2}-\frac{l}{p-1}\rangle)}{\Gamma_p(\frac{1}{2})}
\prod_{h=1}^{d-2}\frac{\Gamma_p(\langle\frac{h}{d-1}+\frac{l}{p-1}\rangle)}
{\Gamma_p(\frac{h}{d-1})}\prod_{\substack{h=1\\h\neq\frac{d}{2}}}^{d-1}\frac{\Gamma_p(\langle\frac{h}{d}-\frac{l}{p-1}\rangle)}
{\Gamma_p(\frac{h}{d})}\nonumber\\
&=&-1+\sum_{l=1}^{p-2}(-p)^{1-\sum_{h=1,~h\neq\frac{d}{2}}^{d-1}\lfloor\frac{h}{d}-\frac{l}{p-1}\rfloor
-\sum_{h=1}^{d-2}\lfloor\frac{h}{d-1}+\frac{l}{p-1}\rfloor}
~\overline{\omega}^{l}(-x)\nonumber\\
&\times&
\frac{(-p)^{-\lfloor\frac{1}{2}-\frac{l}{p-1}\rfloor}\Gamma_p\left(\frac{l}{p-1}\right)
\Gamma_p(\langle\frac{1}{2}-\frac{l}{p-1}\rangle)}{\Gamma_p(\frac{1}{2})}\nonumber\\
&\times&\prod_{h=1}^{d-2}\frac{\Gamma_p(\langle\frac{h}{d-1}+\frac{l}{p-1}\rangle)}
{\Gamma_p(\frac{h}{d-1})}\prod_{\substack{h=1\\h\neq\frac{d}{2}}}^{d-1}\frac{\Gamma_p(\langle\frac{h}{d}-\frac{l}{p-1}\rangle)}
{\Gamma_p(\frac{h}{d})}.\nonumber
\end{eqnarray}
Adding and subtracting the term under summation for $l=0$, and then applying Lemma \ref{lemma7} we deduce that
\begin{eqnarray}
A_x&=&-1+p+p\sum_{t\in \mathbb{F}_p}\phi(t(t-1))\sum_{l=0}^{p-2}(-p)^{-\sum_{h=1,~h\neq\frac{d}{2}}^{d-1}\lfloor\frac{h}{d}-\frac{l}{p-1}
\rfloor-\sum_{h=1}^{d-2}\lfloor\frac{h}{d-1}+\frac{l}{p-1}\rfloor}\nonumber\\
&\times&\overline{\omega}^{l}(-x)\omega^{l}(- t)
\prod_{h=1}^{d-2}\frac{\Gamma_p(\langle\frac{h}{d-1}+\frac{l}{p-1}\rangle)}
{\Gamma_p(\frac{h}{d-1})}\prod_{\substack{h=1\\h\neq\frac{d}{2}}}^{d-1}\frac{\Gamma_p(\langle\frac{h}{d}-\frac{l}{p-1}\rangle)}
{\Gamma_p(\frac{h}{d})}\nonumber\\
&=&-1+p-p(p-1)\sum_{t \in \mathbb{F}_p^{\times}}\phi(t(t-1))\nonumber\\
&\times& \hspace{-.2cm}{_{d-2}G}_{d-2}\left[\begin{array}{ccccccccc}
\frac{1}{d},\hspace{-.2cm} & \frac{2}{d}, & \hspace{-.2cm} \ldots, & \frac{\frac{d}{2}-1}{d}, & \frac{\frac{d}{2}+1}{d}, & \frac{\frac{d}{2}+2}{d},
& \hspace{-.2cm} \ldots, & \frac{d-2}{d}, & \hspace{-.1cm} \frac{d-1}{d}\vspace{.1cm} \\
\frac{1}{d-1},\hspace{-.2cm} & \frac{2}{d-1}, & \hspace{-.2cm} \ldots, &\frac{\frac{d}{2}-1}{d-1}, & \frac{\frac{d}{2}}{d-1},
&\frac{\frac{d}{2}+1}{d-1}, & \hspace{-.2cm} \ldots, & \frac{d-3}{d-1}, & \hspace{-.1cm} \frac{d-2}{d-1}
\end{array}|\frac{x}{t}\right].\nonumber
\end{eqnarray}
Finally, combining \eqref{eq-611} and the above expression, and then replacing $1/t$ by $t$ we complete the proof.
\end{proof}
\section{Transformations and special values of the $p$-adic hypergeometric series}
In this section, we derive transformations for the $p$-adic hypergeometric series. We use these transformations to find
certain special values of the $G$-function. In \cite{BS-FFA}, we express the number of distinct zeros of the polynomials $x^d+ax+b$ and $x^d+ax^{d-1}+b$
over a finite field in terms of McCarthy's $p$-adic hypergeometric series. We use certain Gauss sums evaluations from \cite{BS-FFA} in the proof of
Theorem \ref{MT-6} below.
\begin{proof}[Proof of Theorem \ref{MT-6}]
For $\lambda\in\mathbb{F}_p^{\times}$, we consider the sum
\begin{eqnarray}\label{eq-503}
A_{\lambda}=\sum_{l=0}^{p-2}g(T^{-l})g(T^{-(d-1)l})g(T^{dl})T^l\left({\frac{(-1)^d(d-1)^{d-1}}{d^d\lambda}}\right).
\end{eqnarray}
Since \eqref{eq-505} and \eqref{eq-503} contain the same Gauss sums, so proceeding similarly as shown in the proof of Theorem \ref{point-count},
we deduce that
\begin{eqnarray}\label{eq-504}
A_{\lambda}=p-1+p(p-1)~{_{d-1}G}_{d-1}\left[\begin{array}{cccc}
                                           \frac{1}{d}, & \frac{2}{d}, & \ldots, & \frac{d-1}{d} \vspace{.1cm}\\
                                           0, & \frac{1}{d-1}, & \ldots, & \frac{d-2}{d-1}
                                         \end{array}|\lambda\right].
\end{eqnarray}
Now, if $d$ is even, then replacing $l$ by $l-\frac{p-1}{2}$ in \eqref{eq-503} we have
\begin{eqnarray}\label{eq-506}
&&A_{\lambda}=\phi(\lambda(d-1))\sum_{l=0}^{p-2}g(T^{-l+\frac{p-1}{2}})g(T^{-(d-1)l+\frac{p-1}{2}})g(T^{dl}) T^l\left(\frac{(d-1)^{d-1}}{d^d\lambda}\right).
\end{eqnarray}
We observe that the Gauss sums present in \eqref{eq-506} are the same Gauss sums appeared in \cite[Eqn 11]{BS-FFA}.
Therefore, proceeding similarly as shown in the proof of \cite[Theorem 1.2]{BS-FFA}, we deduce that
\begin{eqnarray}\label{eq-507}
A_{\lambda}&=&p-1+p(p-1)\phi(-\lambda(d-1))\nonumber\\
&&\times{_{d-1}}G_{d-1}\left[\begin{array}{ccccccc}
                       \frac{1}{2(d-1)},\hspace{-.2cm} & \frac{3}{2(d-1)}, \hspace{-.2cm}& \ldots, \hspace{-.2cm}& \frac{d-1}{2(d-1)},
                       \hspace{-.2cm}& \frac{d+1}{2(d-1)}, \hspace{-.2cm}& \ldots, \hspace{-.2cm} & \frac{2(d-1)-1}{2(d-1)}\vspace{.1cm}\\
                       0, \hspace{-.2cm}& \frac{1}{d}, \hspace{-.2cm}& \ldots, \hspace{-.2cm}& \frac{\frac{d}{2}-1}{d}, \hspace{-.2cm}
                       & \frac{\frac{d}{2}+1}{d}, \hspace{-.2cm}& \ldots, \hspace{-.2cm} & \frac{d-1}{d} \end{array}|\frac{1}{\lambda}
\right].
\end{eqnarray}
Combining \eqref{eq-504} and \eqref{eq-507} we obtain the desired transformation when $d$ is even.
\par If $d$ is odd, then replacing $l$ by $l-\frac{p-1}{2}$ in \eqref{eq-503} we have
\begin{eqnarray}\label{eq-508}
&&A_{\lambda}=\phi(-d\lambda)\sum_{l=0}^{p-2}g(T^{-l+\frac{p-1}{2}})g(T^{-(d-1)l})g(T^{dl+\frac{p-1}{2}})
T^l\left(\frac{-(d-1)^{d-1}}{d^d\lambda}\right).
\end{eqnarray}
Again, we observe that the Gauss sums present in \eqref{eq-508} are the same Gauss sums appeared in \cite[Eqn 22]{BS-FFA}.
Therefore, proceeding similarly as shown in the proof of \cite[Theorem 1.3]{BS-FFA}, we deduce that
\begin{eqnarray}\label{eq-509}
A_{\lambda}&=&p-1+p(p-1)\phi(d\lambda)\nonumber\\
&\times&{_{d-1}}G_{d-1}\left[\begin{array}{cccccccc}
                      0, \hspace{-.2cm}& \frac{1}{d-1}, \hspace{-.2cm}& \ldots, \hspace{-.2cm}& \frac{d-3}{2(d-1)}, \hspace{-.2cm}& \frac{d-1}{2(d-1)},
                      \hspace{-.2cm}&\ldots, \hspace{-.2cm}& \frac{d-3}{d-1}, \hspace{-.2cm}& \frac{d-2}{d-1} \vspace{.1cm}\\
                      \frac{1}{2d}, \hspace{-.2cm}& \frac{3}{2d}, \hspace{-.2cm}& \ldots, \hspace{-.2cm}& \frac{d-2}{2d}, \hspace{-.2cm}&
                      \frac{d+2}{2d}, \hspace{-.2cm}& \ldots, \hspace{-.2cm}& \frac{2d-3}{2d}, \hspace{-.2cm}& \frac{2d-1}{2d}
                    \end{array}
|\frac{1}{\lambda}
\right].
\end{eqnarray}
Finally, combining \eqref{eq-504} and \eqref{eq-509} we obtain the desired transformation when $d$ is odd. This completes the proof of the theorem.
\end{proof}
\begin{remark}
For $\lambda \neq 0$, the number of points in $\mathbb{P}^1_{\mathbb{F}_q}$ over a finite field $\mathbb{F}_q$ on the family
$Z_{\lambda}: x_1^d+x_2^d=d\lambda x_1x_2^{d-1}$ is equal to the number of distinct zeros of the polynomial $x^d-d\lambda x +1$ over $\mathbb{F}_q$.
Therefore, using \cite[Theorem 1.2 and Theorem 1.3]{BS-FFA} and Theorem \ref{point-count} we obtain the transformations stated in Theorem \ref{MT-6} for
certain values of $\lambda$, namely $\lambda^d(d-1)^{d-1}$.
\end{remark}
\begin{proof}[Proof of Theorem \ref{MT-5}]
Putting $d=3$ in Theorem \ref{MT-6}, we find that
\begin{eqnarray}\label{eq-500}
{_2G}_2\left[\begin{array}{cc}
               \frac{1}{3}, & \frac{2}{3} \vspace{.1 cm} \\
               0, & \frac{1}{2}
             \end{array}|\frac{4}{27}\right]={_2G}_2\left[\begin{array}{cc}
               0, & \frac{1}{2} \vspace{.1 cm} \\
               \frac{1}{6}, & \frac{5}{6}
             \end{array}|\frac{27}{4}\right]
\end{eqnarray}
for $p>3$. Now, from \cite[Theorem 4.6]{BS5} we have
\begin{eqnarray}\label{eq-501}
&&{_4G}_4\left[\begin{array}{cccc}
         0, & \frac{1}{4}, & \frac{1}{2}, & \frac{3}{4} \vspace{.1 cm} \\
         \frac{1}{10}, & \frac{3}{10}, & \frac{7}{10}, & \frac{9}{10}
       \end{array}|\frac{-5^5}{4^4}\right]\nonumber\\
       &&=\phi(-1)+\phi(3)+\phi(-1){_2G}_2\left[\begin{array}{cc}
               0, & \frac{1}{2} \vspace{.1 cm} \\
               \frac{1}{6}, & \frac{5}{6}
             \end{array}|\frac{27}{4}\right]
\end{eqnarray} for $p>7$ and $p\neq23$.
Combining \eqref{eq-500} and \eqref{eq-501} we readily obtain the first identity.
Again, if we apply Theorem \ref{MT-6} for $d=5$, then for $p=3$ and $p>5$ we have
\begin{eqnarray}\label{eq-502}
&&{_4G}_4\left[\begin{array}{cccc}
         \frac{1}{5}, & \frac{2}{5}, & \frac{3}{5}, & \frac{4}{5} \vspace{.1 cm} \\
         0, & \frac{1}{4}, & \frac{1}{2}, & \frac{3}{4}
       \end{array}|-\frac{4^4}{5^5}\right]\nonumber\\
       &&=\phi(-1){_4G}_4\left[\begin{array}{cccc}
         0, & \frac{1}{4}, & \frac{1}{2}, & \frac{3}{4} \vspace{.1 cm} \\
         \frac{1}{10}, & \frac{3}{10}, & \frac{7}{10}, & \frac{9}{10}
       \end{array}|-\frac{5^5}{4^4}\right].
\end{eqnarray}
Combining \eqref{eq-501}, \eqref{eq-502} and \eqref{eq-500} we obtain the second set of transformations.
\end{proof}
\par
In \cite{BS5}, the authors with D. McCarthy find certain special values of the $G$-function. We use the transformations given in Theorem
\ref{MT-6} to find some new values of the $G$-function.
\begin{theorem}\label{sv1}
Let $a,b,c\in\mathbb{F}_p^{\times}$ be such that $a+b+c=0$ and $ab+bc+ca\neq 0$. Then, for $p\geq 5$, we have
\begin{eqnarray}\label{sv1-eq1}
 {_{2}G}_{2}\left[\begin{array}{cc}
                         \frac{1}{3}, & \frac{2}{3} \vspace{.1cm}\\
                         0, & \frac{1}{2}
                       \end{array}|-\frac{4(ab+bc+ca)^3}{27a^2b^2c^2}\right]=A,
                       \end{eqnarray}
where $A=2$ if all of $a, b, c$ are distinct and $A=1$ if exactly two of $a, b, c$ are equal.
\par If $a,b,c\in\mathbb{F}_p^{\times}$ are such that $ab+bc+ca=0$ and $a+b+c\neq 0$, then, for $p\geq 5$, we have
\begin{eqnarray}\label{sv1-eq2}
 {_{2}}G_{2}\left[\begin{array}{cc}
                         \frac{1}{3}, & \frac{2}{3} \vspace{.1cm}\\
                         0, & \frac{1}{2}
                       \end{array}|-\frac{4(a+b+c)^3}{27abc}\right]=A.
\end{eqnarray}
\end{theorem}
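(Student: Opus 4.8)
The plan is to reduce each identity to counting the \emph{distinct} roots of a single depressed cubic whose roots are $a,b,c$ (or their reciprocals), and then to read off the value of the $G$-function from the root-counting formula for trinomials established in \cite{BS-FFA}. Concretely, I would invoke the $d=3$ case of that formula, which—in the normalization forced by Theorem \ref{point-count} together with the Remark identifying $N_{\mathbb{F}_p}(Z_\lambda)$ with the number of distinct zeros of $x^3-3\lambda x+1$—reads: for $u,v\in\mathbb{F}_p^{\times}$ and $p\geq 5$ (so that $p\nmid 6$ and $27\neq 0$), the number of distinct roots in $\mathbb{F}_p$ of $x^3+ux+v$ equals $1+{}_2G_2\!\left[\begin{smallmatrix}\frac13, & \frac23\\ 0, & \frac12\end{smallmatrix}\,\big|\,-\frac{4u^3}{27v^2}\right]$. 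The quantity $-4u^3/27v^2$ is exactly the scaling invariant of a depressed cubic, so the correct argument will appear automatically in both identities.

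For \eqref{sv1-eq1}, the hypothesis $a+b+c=0$ makes the monic cubic with roots $a,b,c$ depressed: $(x-a)(x-b)(x-c)=x^3+e_2x-e_3$, where $e_2=ab+bc+ca\neq 0$ and $e_3=abc\neq 0$. Applying the trinomial formula with $u=e_2$, $v=-e_3$ gives argument $-\frac{4e_2^3}{27e_3^2}=-\frac{4(ab+bc+ca)^3}{27a^2b^2c^2}$, matching the statement, while the left-hand side of the formula is the number of distinct elements among $a,b,c$. A triple root is impossible here: $a=b=c$ with $a+b+c=0$ forces $3a=0$, hence $a=0$ (as $p\neq 3$), contradicting $a\in\mathbb{F}_p^{\times}$. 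Thus the distinct-root count is $3$ if $a,b,c$ are distinct and $2$ if exactly two coincide, i.e.\ it equals $A+1$; subtracting $1$ yields \eqref{sv1-eq1}.

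For \eqref{sv1-eq2} the cubic with roots $a,b,c$ is $x^3-e_1x^2-e_3$ (using $e_2=0$), which is \emph{not} depressed since $e_1=a+b+c\neq 0$. I would instead pass to the reciprocal roots $1/a,1/b,1/c\in\mathbb{F}_p^{\times}$: their coefficient of $x^2$ vanishes because $\tfrac1a+\tfrac1b+\tfrac1c=e_2/e_3=0$, giving the depressed cubic $x^3+(e_1/e_3)x-1/e_3$. Applying the formula with $u=e_1/e_3$, $v=-1/e_3$ produces argument $-\frac{4(e_1/e_3)^3}{27/e_3^2}=-\frac{4e_1^3}{27e_3}=-\frac{4(a+b+c)^3}{27abc}$, as required. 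Since $x\mapsto 1/x$ is a bijection of $\mathbb{F}_p^{\times}$, the number of distinct reciprocal roots equals the number of distinct $a,b,c$; a triple root is again ruled out (here $a=b=c$ with $ab+bc+ca=0$ forces $3a^2=0$), so the count is $A+1$ and \eqref{sv1-eq2} follows.

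The step I expect to be the crux is the trinomial root-count formula itself in the regime where $v$ (equivalently $e_3$, resp.\ $1/e_3$) is \emph{not} a cube in $\mathbb{F}_p$. When $v$ is a cube one can rescale $x^3+ux+v$ into the shape $x^3-3\lambda x+1$ with $\lambda\in\mathbb{F}_p$ and quote Theorem \ref{point-count} directly; but for $p\equiv 1\pmod 3$ there are admissible triples (already for $p=7$, e.g.\ $a=1,\ b=c=3$, where $abc=2$ is a non-cube) for which no such $\lambda$ exists over $\mathbb{F}_p$. A clean partial remedy is the observation that two depressed cubics $x^3+ux+v$ and $x^3+u'x+v'$ (all coefficients nonzero) with the same invariant $-4u^3/27v^2$ are carried into one another by $x\mapsto tx$ with $t=vu'/(v'u)\in\mathbb{F}_p^{\times}$, so the distinct-root count depends only on this invariant; tying that count to the $G$-function on the non-cube classes, however, still requires the direct Gauss-sum evaluation performed in \cite{BS-FFA}. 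This is precisely why I would cite the $d=3$ case of their theorem rather than attempt to deduce the special values from Theorem \ref{point-count} alone.
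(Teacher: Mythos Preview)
Your argument is correct: the depressed-cubic construction for \eqref{sv1-eq1}, the reciprocal-root trick for \eqref{sv1-eq2}, the exclusion of triple roots via $p\neq 3$, and the scaling observation that the distinct-root count depends only on $-4u^3/27v^2$ are all sound, and your diagnosis of the non-cube obstruction (with the explicit $p=7$ example) is exactly the right caveat. The paper's own proof, however, takes a different and shorter route: it quotes \cite[Theorem~4.1]{BS5}, which already records
${}_2G_2\bigl[\begin{smallmatrix}0&1/2\\1/6&5/6\end{smallmatrix}\,\big|\,-\tfrac{27a^2b^2c^2}{4(ab+bc+ca)^3}\bigr]=A\,\phi(-(ab+bc+ca))$
and the companion value at $-\tfrac{27abc}{4(a+b+c)^3}$, and then applies the $d=3$ case of Theorem~\ref{MT-6}; the factor $\phi(3\lambda)$ from the transformation cancels the quadratic character on the right, leaving $A$. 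Your route makes the integrality of the answer transparent (it is literally a distinct-root count minus one) and sidesteps Theorem~\ref{MT-6}, while the paper's two-line argument is designed precisely to advertise Theorem~\ref{MT-6} as a device for converting known special values into new ones. One caution on your citation: the trinomial root-count results in \cite{BS-FFA} are stated in terms of the $G$-functions appearing on the \emph{right-hand} side of Theorem~\ref{MT-6} (this is how they enter that theorem's proof), so invoking them verbatim for the non-cube case may still cost you one pass through Theorem~\ref{MT-6}, at which point the two approaches largely coincide.
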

\begin{proof}
Let $a+b+c=0$ and $ab+bc+ca\neq 0$. Then, from \cite[Theorem 4.1]{BS5}, for $p\geq 5$, we have
\begin{eqnarray}\label{eq-600}
 {_2}G_{2}\left[\begin{array}{cc}
                         0, & \frac{1}{2} \vspace{.1cm}\\
                         \frac{1}{6}, & \frac{5}{6}
                       \end{array}|-\frac{27a^2b^2c^2}{4(ab+bc+ca)^3}\right]=
             A\cdot \phi(-(ab+bc+ca)).
\end{eqnarray}
Now, applying Theorem \ref{MT-6} for $d=3$ and $\lambda=-\frac{4(ab+bc+ca)^3}{27a^2b^2c^2}$, and
then comparing the result with \eqref{eq-600} we derive \eqref{sv1-eq1}.
Again, if $ab+bc+ca=0$ and $a+b+c\neq 0$, then \cite[Theorem 4.1]{BS5} gives
\begin{eqnarray}\label{eq-601}
 {_2}G_{2}\left[\begin{array}{cc}
                         0, & \frac{1}{2} \vspace{.1cm} \\
                         \frac{1}{6}, & \frac{5}{6}
                       \end{array}|-\frac{27abc}{4(a+b+c)^3}\right]=A\cdot \phi(-abc(a+b+c))
\end{eqnarray}
for $p\geq5$.
We now apply Theorem \ref{MT-6} for $d=3$ and $\lambda=-\frac{4(a+b+c)^3}{27abc}$, and
then compare the result with \eqref{eq-601} to derive \eqref{sv1-eq2}. This completes the proof of the theorem.
\end{proof}
\begin{example}
If we put $a=b=1$ and $c=-2$ in \eqref{sv1-eq1}, then for $p\geq5$ we have
\begin{eqnarray}
{_{2}G}_{2}\left[\begin{array}{cc}
                         \frac{1}{3}, & \frac{2}{3} \vspace{.1cm}\\
                         0, & \frac{1}{2}
                       \end{array}|1\right]=1.\nonumber
\end{eqnarray}
If we put $a=1$, $b=2$ and $c=-3$ in \eqref{sv1-eq1}, then for $p\geq5$ we have
\begin{eqnarray}
{_{2}G}_{2}\left[\begin{array}{cc}
                         \frac{1}{3}, & \frac{2}{3} \vspace{.1cm}\\
                         0, & \frac{1}{2}
                       \end{array}|\frac{343}{243}\right]=2.\nonumber
\end{eqnarray}
\end{example}
\begin{theorem}\label{sv-2}
If $p\geq 5$, then we have
\begin{eqnarray}
{_{3}}G_{3}\left[\begin{array}{ccc}
             \frac{1}{4}, & \frac{1}{2}, & \frac{3}{4} \vspace{.1cm}\\
             0, & \frac{1}{3}, & \frac{2}{3}
           \end{array}|1\right]=1+\phi(-2).\nonumber
\end{eqnarray}
\end{theorem}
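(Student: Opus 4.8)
The plan is to specialize the $d=4$ instance of the point-count machinery to a quartic whose discriminant vanishes, so that the argument of the $G$-function becomes exactly $1$. Setting $d=4$ in Theorem \ref{point-count}, the relevant series is
\[
{_3G}_3\left[\begin{array}{ccc} \frac14, & \frac12, & \frac34 \\ 0, & \frac13, & \frac23 \end{array}\Big| \alpha\right],
\]
with $\alpha = 27\lambda^4$, and by the remark following Theorem \ref{MT-6} its value plus $1$ equals the number of distinct zeros in $\mathbb{F}_p$ of the quartic $x^4 - 4\lambda x + 1$; more generally I would invoke the zero-counting formula of \cite{BS-FFA} for $x^4 + ax + b$, whose $G$-argument is the discriminant ratio $\frac{27a^4}{256 b^3}$ (this reduces to $\alpha$ under $a=-4\lambda,\ b=1$). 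The key point is that this ratio equals $1$ precisely when the discriminant $256b^3 - 27a^4$ vanishes, that is, when the quartic has a repeated root.

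First I would produce, for every $p \ge 5$, an explicit discriminant-zero quartic defined over $\mathbb{F}_p$. Picking any $r \in \mathbb{F}_p^\times$ and setting
\[
f(x) = (x-r)^2\,(x^2 + 2rx + 3r^2) = x^4 - 4r^3 x + 3r^4,
\]
one checks that $27(-4r^3)^4 = 256(3r^4)^3$, so $f$ has discriminant zero and hence $G$-argument $1$; crucially its coefficients lie in $\mathbb{F}_p$, which sidesteps the fact that $27\lambda^4 = 1$ need not be solvable for $\lambda \in \mathbb{F}_p$.

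Next I would count the distinct roots of $f$ directly. The factor $(x-r)^2$ contributes the single distinct root $r$, while the quadratic $x^2 + 2rx + 3r^2$ has discriminant $-8r^2$ and therefore $1 + \phi(-8r^2) = 1 + \phi(-8) = 1 + \phi(-2)$ roots in $\mathbb{F}_p$ (using $\phi(r^2)=1$ and $\phi(-8)=\phi(-2)\phi(4)=\phi(-2)$); since this quadratic does not vanish at $r$ (its value there is $6r^2 \neq 0$ for $p>3$), these roots are all distinct from $r$. Hence $f$ has $2 + \phi(-2)$ distinct roots in $\mathbb{F}_p$, and substituting into the point-count identity (value $=$ number of distinct roots $-1$) yields
\[
{_3G}_3\left[\begin{array}{ccc} \frac14, & \frac12, & \frac34 \\ 0, & \frac13, & \frac23 \end{array}\Big| 1\right] = 1 + \phi(-2).
\]

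The main obstacle is the argument-$1$ issue just flagged: the naive route through Theorem \ref{point-count} only reaches the arguments $27\lambda^4$ with $\lambda \in \mathbb{F}_p^\times$, and $1$ may fail to be of this form. Building the quartic from a prescribed double root $r$ resolves this, but it requires the general $x^4 + ax + b$ zero-count of \cite{BS-FFA} rather than the single-parameter family $x^4 - 4\lambda x + 1$, so I would need to confirm that the $G$-argument in that formula is indeed $\frac{27a^4}{256 b^3}$. As an alternative entirely within this paper, one can instead apply Theorem \ref{MT-6} with $d=4$ and $\lambda=1$ to rewrite the left-hand series as $\phi(-3)$ times the series ${_3G}_3$ with upper parameters $\frac16,\frac12,\frac56$ and lower parameters $0,\frac14,\frac34$ evaluated at $1$, and then evaluate that series by the same double-root device or by a special value from \cite{BS5}, mirroring the proof of Theorem \ref{sv1}.
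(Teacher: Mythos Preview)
The paper's proof is exactly the alternative you sketch at the end: apply Theorem~\ref{MT-6} with $d=4$ and $\lambda=1$ to obtain
\[
{_3G}_3\!\left[\begin{array}{ccc}\tfrac14,&\tfrac12,&\tfrac34\\ 0,&\tfrac13,&\tfrac23\end{array}\Big|1\right]
=\phi(-3)\,{_3G}_3\!\left[\begin{array}{ccc}\tfrac16,&\tfrac12,&\tfrac56\\ 0,&\tfrac14,&\tfrac34\end{array}\Big|1\right],
\]
then quote \cite[Theorem~4.5]{BS5}, which gives the right-hand series as $\phi(-3)+\phi(6)$; multiplying out yields $1+\phi(-18)=1+\phi(-2)$.

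Your main approach via an explicit discriminant-zero quartic is a genuinely different and more self-contained idea, and the root-count for $f(x)=(x-r)^2(x^2+2rx+3r^2)$ is correct. The obstacle you flag is real, but in a slightly different way than you anticipate: for $d$ even, the zero-counting formula of \cite{BS-FFA} for $x^d+ax+b$ produces the $G$-function with upper parameters $\tfrac{1}{2(d-1)},\tfrac{3}{2(d-1)},\ldots$ (i.e.\ the right-hand side of Theorem~\ref{MT-6}), \emph{not} the one with upper parameters $\tfrac1d,\ldots,\tfrac{d-1}{d}$ that you need. This is not fatal: the formula you actually want, namely that the number of zeros of $x^4+ax+b$ over $\mathbb{F}_p$ equals $1+{_3G}_3[\tfrac14,\tfrac12,\tfrac34;\,0,\tfrac13,\tfrac23\mid 27a^4/(256b^3)]$, follows by rerunning the proof of Theorem~\ref{point-count} with the character $T^{-4l}(-4\lambda)$ replaced by $T^{-4l}(a)\,T^{3l}(b)$; the Gauss sums are identical and the argument collapses to $27a^4/(256b^3)$ exactly as you predicted. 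With that in hand, your construction gives the value directly, without invoking \cite{BS5}. The paper's route is shorter given the available citation; yours trades that citation for an elementary explicit computation.
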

\begin{proof}
If $p\geq5$, then from \cite[Theorem 4.5]{BS5} we have
\begin{eqnarray}\label{eq-602}
{_{3}}G_{3}\left[\begin{array}{ccc}
             \frac{1}{6}, & \frac{1}{2}, & \frac{5}{6} \vspace{.1cm}\\
             0, & \frac{1}{4}, & \frac{3}{4}
           \end{array}|1\right]=\phi(-3)+\phi(6).
\end{eqnarray}
If we use Theorem \ref{MT-6} for $d=4$ and $\lambda=1$, then we have
\begin{eqnarray}\label{eq-602-new}
{_{3}}G_{3}\left[\begin{array}{ccc}
             \frac{1}{4}, & \frac{1}{2}, & \frac{3}{4} \vspace{.1cm}\\
             0, & \frac{1}{3}, & \frac{2}{3}
           \end{array}|1\right]
=\phi(-3){_{3}}G_{3}\left[\begin{array}{ccc}
             \frac{1}{6}, & \frac{1}{2}, & \frac{5}{6} \vspace{.1cm}\\
             0, & \frac{1}{4}, & \frac{3}{4}
           \end{array}|1\right].
\end{eqnarray}
Now, \eqref{eq-602} and \eqref{eq-602-new} readily gives us the desired special value.
\end{proof}
\section{Summation identities for Greene's hypergeometric series}
In this section we prove the point count formula for the family $Z_{\lambda}$ and the
summation identity for Greene's hypergeometric series.
We first prove two lemmas which will be used to prove our main results. The following lemma is a special case of Davenport-Hasse relation.
\begin{lemma}\label{lemma9}
Let $d$ be a positive integer and let $p$ be an odd prime and $q=p^r$ such that $q\equiv1\pmod{d}$. Then for $t\in\{1,-1\}$ and $l\in\mathbb{Z}$ we have
\begin{align}
\prod_{i=0}^{d-1}g(T^{l+t\frac{i(q-1)}{d}})=\left\{
 \begin{array}{ll}
q^{\frac{d-1}{2}}T^{\frac{(d-1)(d+1)(q-1)}{8d}}(-1)T^{-ld}(d)g(T^{ld}), & \hbox{if $d\geq1$ is odd ;} \\
                                               q^{\frac{d-2}{2}}g(\phi)T^{\frac{(d-2)(q-1)}{8}}(-1)T^{-ld}(d)g(T^{ld}), & \hbox{if $d\geq2$ is even.}
                                             \end{array}
                                           \right.
\notag
\end{align}
\end{lemma}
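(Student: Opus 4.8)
The plan is to recognize the left-hand product as a single instance of the Davenport--Hasse relation (Theorem \ref{thm3}) and then to evaluate the auxiliary product of Gauss sums ranging over the characters of order dividing $d$ by pairing each such character with its inverse. Before invoking anything, I would first dispose of the sign $t$: as $i$ runs over $0,1,\dots,d-1$, the exponents $t\cdot\frac{i(q-1)}{d}$ run modulo $q-1$ over the same set $\{0,\frac{q-1}{d},\dots,\frac{(d-1)(q-1)}{d}\}$ for $t=1$ and for $t=-1$, since $T^{q-1}=\varepsilon$. Hence the two products agree term-for-term up to reordering, and it suffices to prove the identity for $t=1$.

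Next, I would apply Theorem \ref{thm3} with $m=d$ and $\psi=T^{l}$; this is legitimate because $q\equiv 1\pmod d$. The characters $\chi$ satisfying $\chi^{d}=\varepsilon$ are precisely $T^{i(q-1)/d}$ for $0\le i\le d-1$, so $\chi\psi=T^{\,l+i(q-1)/d}$ and the relation reads
$$\prod_{i=0}^{d-1} g\!\left(T^{\,l+i(q-1)/d}\right)=-g(T^{ld})\,T^{-ld}(d)\prod_{i=0}^{d-1} g\!\left(T^{\,i(q-1)/d}\right),$$
using $(T^{l})^{d}=T^{ld}$ and $(T^{l})(d^{-d})=T^{-ld}(d)$. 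It then remains only to evaluate $P:=\prod_{i=0}^{d-1} g(T^{i(q-1)/d})$.

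To compute $P$ I would split off the $i=0$ factor $g(\varepsilon)=-1$ and pair each remaining character $\chi_{i}:=T^{i(q-1)/d}$ with $\chi_{d-i}=\overline{\chi_{i}}$, using Lemma \ref{fusi3} in the form $g(\chi_{i})g(\chi_{d-i})=q\,\chi_{i}(-1)$ on each pair. For $d$ odd there are $\frac{d-1}{2}$ such pairs and no self-conjugate nontrivial character, so $P=-q^{(d-1)/2}\prod_{i=1}^{(d-1)/2}\chi_{i}(-1)$; for $d$ even the quadratic character $\phi=T^{(q-1)/2}=\chi_{d/2}$ is its own inverse and is left unpaired, so $P=-q^{(d-2)/2}g(\phi)\prod_{i=1}^{(d-2)/2}\chi_{i}(-1)$. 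Each residual product collapses to one value $T^{\ast}(-1)$ via $\sum_{i=1}^{(d-1)/2} i=\frac{(d-1)(d+1)}{8}$ (odd case) and $\sum_{i=1}^{(d-2)/2} i=\frac{(d-2)d}{8}$ (even case), yielding $T^{(d-1)(d+1)(q-1)/(8d)}(-1)$ and $T^{(d-2)(q-1)/8}(-1)$ respectively. Substituting $P$ into the displayed identity and cancelling the two factors of $-1$ produces the two stated formulas.

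The argument is largely bookkeeping, and I expect the only delicate points to be in this last step: correctly singling out the self-conjugate quadratic character in the even case so that it contributes an unpaired $g(\phi)$ rather than a factor of $q$, and verifying the arithmetic of the exponents. In particular one should check that the relevant exponents are integers --- $d^{2}-1\equiv 0\pmod 8$ for odd $d$ and $(d-2)d\equiv 0\pmod 8$ for even $d$, combined with $d\mid q-1$ --- so that the character values $T^{(d-1)(d+1)(q-1)/(8d)}(-1)$ and $T^{(d-2)(q-1)/8}(-1)$ are well defined and match the claimed right-hand sides exactly.
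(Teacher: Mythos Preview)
Your proposal is correct and follows exactly the route the paper indicates: apply the Davenport--Hasse relation (Theorem~\ref{thm3}) with $m=d$ and $\psi=T^{l}$, then evaluate the resulting product $\prod_{i=0}^{d-1}g(T^{i(q-1)/d})$ by pairing each nontrivial character with its inverse via Lemma~\ref{fusi3}. The paper's own proof is a one-line pointer to these two ingredients, and your write-up supplies precisely the bookkeeping it omits.
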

\begin{proof}
The lemma readily follows by putting $m=d$ in Theorem \ref{thm3}, and then applying Lemma \ref{fusi3}.
\end{proof}
\begin{lemma}\label{lemma10}
Let $0\leq l\leq q-2$. Then we have
\begin{align}\label{eq-18}
\frac{g(T^l)g(T^{-l}\phi)}{g(\phi)}=\sum_{t\in\mathbb{F}_q}\phi(t(t-1))T^{-l}(-t).
\end{align}
\end{lemma}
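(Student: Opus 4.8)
The plan is to start from the character-sum side (the right-hand side) and recognize it as a Jacobi sum, then convert to Gauss sums via the relation \eqref{lemma6}, reducing everything to an identity that Lemma \ref{fusi3} settles. First I would use multiplicativity of the characters to rewrite the summand: since $\phi(t(t-1))=\phi(t)\phi(t-1)$, since $T^{-l}(-t)=T^{-l}(-1)T^{-l}(t)$, and since $\phi(t-1)=\phi(-1)\phi(1-t)$, the sum becomes
\[
\sum_{t\in\mathbb{F}_q}\phi(t(t-1))T^{-l}(-t)=\phi(-1)T^{-l}(-1)\sum_{t\in\mathbb{F}_q}(\phi T^{-l})(t)\,\phi(1-t)=\phi(-1)T^{-l}(-1)\,J(\phi T^{-l},\phi).
\]
The extra $\phi(-1)$ and $T^{-l}(-1)$ are where the sign bookkeeping enters, so I would track them carefully throughout.

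Next I would split according to whether $T^{-l}$ is trivial. For $1\le l\le q-2$ we have $T^{-l}\neq\varepsilon$, and since $\phi\neq\varepsilon$ the pair is not both trivial; the first case of \eqref{lemma6} then gives $J(\phi T^{-l},\phi)=g(\phi T^{-l})g(\phi)/g(T^{-l})$. Substituting this and cancelling the common factor $g(\phi T^{-l})=g(T^{-l}\phi)$ against the corresponding factor in the target expression $g(T^l)g(T^{-l}\phi)/g(\phi)$ (legitimate since Gauss sums never vanish), the claimed identity reduces to the purely Gauss-sum statement
\[
g(T^l)g(T^{-l})=\phi(-1)\,T^{-l}(-1)\,g(\phi)^2.
\]
Here I would invoke Lemma \ref{fusi3} twice: once with the character $\phi=\phi^{-1}$ to get $g(\phi)^2=q\,\phi(-1)$, and once to get $g(T^l)g(T^{-l})=q\,T^l(-1)$. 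Using $\phi(-1)^2=1$ together with the fact that $T^{-l}(-1)=T^{l}(-1)$ — both equal $\pm1$ because $q$ is odd — the right-hand side collapses to $q\,T^l(-1)$, matching the left, which proves the case $1\le l\le q-2$.

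Finally I would dispose of the boundary case $l=0$ directly, since there $T^{-l}=\varepsilon$ forces the $\chi\psi=\varepsilon$ branch of \eqref{lemma6}. In that case the target left-hand side is $g(\varepsilon)g(\phi)/g(\phi)=g(\varepsilon)=-1$, while the character sum is $\sum_{t}\phi(t(t-1))=\phi(-1)J(\phi,\phi)$; the second case of \eqref{lemma6} combined with $g(\phi)^2=q\,\phi(-1)$ yields $J(\phi,\phi)=-\phi(-1)$, so the sum is also $-1$. I do not expect a serious obstacle here — this is a standard Gauss/Jacobi sum manipulation in the same spirit as Lemma \ref{lemma3} — and the only points demanding genuine care are the repeated $\phi(-1)$ and $T^{l}(-1)$ sign factors and the separate handling of $l=0$, where one must switch to the $\chi\psi=\varepsilon$ branch of the Gauss–Jacobi relation.
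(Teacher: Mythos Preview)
Your proposal is correct and follows essentially the same approach as the paper: both arguments recognize the character sum as $\phi T^{l}(-1)\,J(\phi T^{-l},\phi)$, convert to Gauss sums via \eqref{lemma6}, and close with Lemma~\ref{fusi3}, treating $l=0$ separately. The only cosmetic difference is that the paper starts from the Gauss-sum side while you start from the character-sum side, which amounts to running the same chain of equalities in reverse.
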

\begin{proof}
If we put $l=0$ on the left hand side of \eqref{eq-18}, then we have $\frac{g(\varepsilon)g(\phi)}{g(\phi)}=-1$.
Also, if we simplify the expression on the right hand side of \eqref{eq-18} for $l=0$, then we have
\begin{align}
&\sum_{t\in\mathbb{F}_q}\phi(t(t-1))
=\phi(-1)\sum_{t\in\mathbb{F}_q}\phi(t)\phi(1-t)=\phi(-1)J(\phi,\phi).\notag
\end{align}
Using \eqref{lemma6} and then Lemma \ref{fusi3} we obtain that the above sum is equal to $-1$. Thus the right hand side of \eqref{eq-18} is also $-1$.
So, the result is true for $l=0$.
Now, for $l\neq0$ using Lemma \ref{fusi3} and then \eqref{lemma6} we have
\begin{align}
\frac{g(T^l)g(T^{-l}\phi)}{g(\phi)}&=\frac{\phi T^l(-1)g(T^{-l}\phi)g(\phi)}{g(T^{-l})}
=\phi T^l(-1)J(T^{-l}\phi,\phi)\notag\\&=\phi T^l(-1)\sum_{t\in\mathbb{F}_q}T^{-l}\phi(t)\phi(1-t)
=\sum_{t\in\mathbb{F}_q}\phi(t(t-1))T^{-l}(-t).\notag
\end{align}
This completes the proof of the lemma.
\end{proof}
We now prove Theorem \ref{point-count2} which will be used to deduce the summation identity.
\begin{proof}[Proof of Theorem \ref{point-count2}]
Let $\#Z_\lambda(\mathbb{F}_q)=\#\{(x_1, x_2)\in \mathbb{F}_q^2: x_1^d+x_2^d=d\lambda x_1x_2^{d-1}\}$
denote the number of $\mathbb{F}_q$-points on the $0$-dimensional variety $Z_\lambda^d: x_1^d+x_2^d=d\lambda x_1x_2^{d-1}$.
If $N_{\mathbb{F}_q}(Z_{\lambda})$ denotes the number of points on $Z_{\lambda}$ in
$\mathbb{P}_{\mathbb{F}_q}^{1}$ then
\begin{eqnarray}\label{eq-36}
N_{\mathbb{F}_q}(Z_{\lambda})=\frac{\#Z_\lambda(\mathbb{F}_q)-1}{q-1}.
\end{eqnarray}
From the proof of Theorem \ref{point-count} we have
\begin{eqnarray}\label{eq-32}
q\cdot\#Z_\lambda(\mathbb{F}_q)&=&q^2+q-1+B+A,
\end{eqnarray}
where $B=2\sum_{z,x_1\in\mathbb{F}_q^{\times}}\theta(zx_1^d)$ and $A=\sum_{z,x_1,x_2\in\mathbb{F}_q^{\times}}\theta(zx_1^d)\theta(zx_2^d)
\theta(-d\lambda zx_1x_2^{d-1}).$ Using Lemma \ref{lemma1} and Lemma \ref{lemma2} we obtain $B=-2(q-1)$.
Also, proceeding similarly as shown in the proof of Theorem \ref{point-count} we have
\begin{eqnarray}\label{eq-30}
A&=&\sum_{l=0}^{q-2}g(T^{-l})g(T^{-(d-1)l})g(T^{dl})T^{-dl}(-d\lambda).
\end{eqnarray}
Here $d\geq3$ is odd. From Lemma \ref{lemma9}, we have
\begin{eqnarray}
g(T^{dl})&=&\frac{\prod_{i=0}^{d-1}g(T^{l+\frac{i(q-1)}{d}})}{q^{\frac{d-1}{2}}T^{\frac{(d-1)(d+1)(q-1)}{8d}}(-1)}T^{dl}(d),\nonumber\\
g(T^{-(d-1)l})&=&\frac{\prod_{i=0}^{d-2}g(T^{-l-\frac{i(q-1)}{d-1}})}{q^{\frac{d-3}{2}}g(\phi)T^{\frac{(d-3)(q-1)}{2}}(-1)}
T^{-(d-1)l}(d-1).\nonumber
\end{eqnarray}
Plugging these two expressions in \eqref{eq-30} we deduce that
\begin{eqnarray}
A&=&\frac{T^{\frac{(3d-1)(q-1)}{8d}}(-1)}{q^{d-2}g(\phi)}\sum_{l=0}^{q-2}g(T^{-l})\prod_{i=0}^{d-1}g(T^{l+\frac{i(q-1)}{d}})
\prod_{i=0}^{d-2}g(T^{-l-\frac{i(q-1)}{d-1}})T^l\left(-\frac{1}{\alpha}\right)\nonumber\\
&=&\frac{T^{\frac{(3d-1)(q-1)}{8d}}(-1)}{q^{d-2}g(\phi)}\sum_{l=0}^{q-2}\{g(T^{l})g(T^{-l-\frac{q-1}{2}})\}
g(T^{-l})^2\prod_{i=1}^{d-1}g(T^{l+\frac{i(q-1)}{d}})\nonumber\\
&\times&\prod_{\substack{i=1\\ i\neq\frac{d-1}{2}}}^{d-2}g(T^{-l-\frac{i(q-1)}{d-1}})T^l\left(-\frac{1}{\alpha}\right),\nonumber
\end{eqnarray}
where $\alpha=\lambda^d(d-1)^{d-1}$.
\par
Now, pairing the terms under summation we obtain
\begin{eqnarray}
A&=&\frac{T^{\frac{(3d-1)(q-1)}{8d}}(-1)}{q^{d-2}g(\phi)}\sum_{l=0}^{q-2}\{g(T^{l})g(T^{-l-\frac{q-1}{2}})\}
g(T^{-l})^2\prod_{i=1}^{d-1}g(T^{l+\frac{i(q-1)}{d}})\nonumber\\
&\times&\prod_{\substack{i=1\\ i\neq\frac{d-1}{2}}}^{d-2}g(T^{-l-\frac{i(q-1)}{d-1}})T^l\left(-\frac{1}{\alpha}\right)\nonumber\\
&=&\frac{T^{\frac{(3d-1)(q-1)}{8d}}(-1)}{q^{d-2}g(\phi)}\sum_{l=0}^{q-2}T^{l}\left(-\frac{1}{\alpha}\right)
\{g(T^{l})g(T^{-l-\frac{q-1}{2}})\}
\{g(T^{l+\frac{q-1}{d}})g(T^{-l-\frac{q-1}{d-1}})\}\nonumber\\
&\times&\{g(T^{l+\frac{2(q-1)}{d}})g(T^{-l-\frac{2(q-1)}{d-1}})\}\cdots
\{g(T^{l+(\frac{d-1}{2}-1)\frac{(q-1)}{d}})g(T^{-l-(\frac{d-1}{2}-1)\frac{(q-1)}{d-1}})\}\nonumber\\
&\times&\{g(T^{l+(\frac{d-1}{2})\frac{(q-1)}{d}})g(T^{-l})\}\{g(T^{l+(\frac{d+1}{2})\frac{(q-1)}{d}})g(T^{-l})\}\nonumber\\
&\times&\{g(T^{l+(\frac{d+3}{2})\frac{(q-1)}{d}})g(T^{-l-(\frac{d+1}{2})\frac{(q-1)}{d-1}})\}\cdots
\{g(T^{l+\frac{(d-1)(q-1)}{d}})g(T^{-l-\frac{(d-2)(q-2)}{d-1}})\}.\nonumber
\end{eqnarray}
Applying Lemma \ref{lemma15} and Lemma \ref{fusi3} we deduce that
\begin{eqnarray}\label{eq-34}
A&=&\frac{q^{\frac{d+1}{2}}}{g(\phi)}\sum_{l=0}^{q-2}T^{l}\left(-\frac{1}{\alpha}\right)\{g(T^{l})g(T^{-l-\frac{q-1}{2}})\}
\left(\begin{array}{c}
                T^{l+\frac{q-1}{d}} \\
                T^{l+\frac{q-1}{d-1}}
              \end{array}
\right)\nonumber\\
&\times&\left(\begin{array}{c}
                T^{l+\frac{2(q-1)}{d}} \\
                T^{l+\frac{2(q-1)}{d-1}}
              \end{array}
\right)\cdots\left(\begin{array}{c}
                T^{l+(\frac{d-3}{2})\frac{(q-1)}{d}} \\
                T^{l+(\frac{d-3}{2})\frac{(q-1)}{d-1}}
              \end{array}
\right)\left(\begin{array}{c}
                T^{l+(\frac{d-1}{2})\frac{(q-1)}{d}} \\
                T^{l}
              \end{array}
\right)\nonumber\\
&\times&\left(\begin{array}{c}
                T^{l+(\frac{d+1}{2})\frac{(q-1)}{d}} \\
                T^{l}
              \end{array}
\right)\left(\begin{array}{c}
                T^{l+(\frac{d+3}{2})\frac{(q-1)}{d}} \\
                T^{l+(\frac{d+1}{2})\frac{(q-1)}{d-1}}
              \end{array}
\right)\cdots\left(\begin{array}{c}
                T^{l+\frac{(d-1)(q-1)}{d}} \\
                T^{l+\frac{(d-2)(q-1)}{d-1}}
              \end{array}
\right)\nonumber\\
&=&q^{\frac{d+1}{2}}\sum_{l=0}^{q-2}T^{l}\left(-\frac{1}{\alpha}\right)
\left\{\frac{g(T^{l})g(T^{-l-\frac{q-1}{2}})}{g(\phi)}\right\}
\left(\begin{array}{c}
                T^{l+\frac{q-1}{d}} \\
                T^{l+\frac{q-1}{d-1}}
              \end{array}
\right)\nonumber\\
&\times&\left(\begin{array}{c}
                T^{l+\frac{2(q-1)}{d}} \\
                T^{l+\frac{2(q-1)}{d-1}}
              \end{array}
\right)\cdots\left(\begin{array}{c}
                T^{l+(\frac{d-3}{2})\frac{(q-1)}{d}} \\
                T^{l+(\frac{d-3}{2})\frac{(q-1)}{d-1}}
              \end{array}
\right)\left(\begin{array}{c}
                T^{l+(\frac{d-1}{2})\frac{(q-1)}{d}} \\
                T^{l}
              \end{array}
\right)\nonumber\\
&\times&\left(\begin{array}{c}
                T^{l+(\frac{d+1}{2})\frac{(q-1)}{d}} \\
                T^{l}
              \end{array}
\right)\left(\begin{array}{c}
                T^{l+(\frac{d+3}{2})\frac{(q-1)}{d}} \\
                T^{l+(\frac{d+1}{2})\frac{(q-1)}{d-1}}
              \end{array}
\right)
\cdots\left(\begin{array}{c}
                T^{l+\frac{(d-1)(q-1)}{d}} \\
                T^{l+\frac{(d-2)(q-1)}{d-1}}
              \end{array}
\right).
\end{eqnarray}
Lemma \ref{lemma10} yields
\begin{eqnarray}
A&=&q^{\frac{d+1}{2}}\sum_{t\in\mathbb{F}_q^{\times}}\phi(t(t-1))\sum_{l=0}^{q-2}T^l\left(\frac{1}{t\alpha}\right)
\left(\begin{array}{c}
                T^{l+\frac{q-1}{d}} \\
                T^{l+\frac{q-1}{d-1}}
              \end{array}
\right)\nonumber\\
&\times&\left(\begin{array}{c}
                T^{l+\frac{2(q-1)}{d}} \\
                T^{l+\frac{2(q-1)}{d-1}}
              \end{array}
\right)\cdots\left(\begin{array}{c}
                T^{l+(\frac{d-3}{2})\frac{(q-1)}{d}} \\
                T^{l+(\frac{d-3}{2})\frac{(q-1)}{d-1}}
              \end{array}
\right)\left(\begin{array}{c}
                T^{l+(\frac{d-1}{2})\frac{(q-1)}{d}} \\
                T^{l}
              \end{array}
\right)\nonumber\\
&\times&\left(\begin{array}{c}
                T^{l+(\frac{d+1}{2})\frac{(q-1)}{d}} \\
                T^{l}
              \end{array}
\right)\left(\begin{array}{c}
                T^{l+(\frac{d+3}{2})\frac{(q-1)}{d}} \\
                T^{l+(\frac{d+1}{2})\frac{(q-1)}{d-1}}
              \end{array}
\right)
\cdots\left(\begin{array}{c}
                T^{l+\frac{(d-1)(q-1)}{d}} \\
                T^{l+\frac{(d-2)(q-1)}{d-1}}
              \end{array}
\right)\nonumber\\
&=&q^{\frac{d+1}{2}}\sum_{t\in\mathbb{F}_q^{\times}}\phi(t(t-1))\sum_{l=0}^{q-2}T^l\left(\frac{1}{t\alpha}\right)
\left(\begin{array}{c}
                T^{l+(\frac{d-1}{2})\frac{(q-1)}{d}} \\
                T^{l}
              \end{array}
\right)
\left(\begin{array}{c}
                T^{l+\frac{q-1}{d}} \\
                T^{l+\frac{q-1}{d-1}}
              \end{array}
\right)\nonumber\\
&\times&\left(\begin{array}{c}
                T^{l+\frac{2(q-1)}{d}} \\
                T^{l+\frac{2(q-1)}{d-1}}
              \end{array}
\right)\cdots\left(\begin{array}{c}
                T^{l+(\frac{d-3}{2})\frac{(q-1)}{d}} \\
                T^{l+(\frac{d-3}{2})\frac{(q-1)}{d-1}}
              \end{array}
\right)
\left(\begin{array}{c}
                T^{l+(\frac{d+1}{2})\frac{(q-1)}{d}} \\
                T^{l}
              \end{array}
\right)\nonumber\\
&\times&\left(\begin{array}{c}
                T^{l+(\frac{d+3}{2})\frac{(q-1)}{d}} \\
                T^{l+(\frac{d+1}{2})\frac{(q-1)}{d-1}}
              \end{array}
\right)
\cdots\left(\begin{array}{c}
                T^{l+\frac{(d-1)(q-1)}{d}} \\
                T^{l+\frac{(d-2)(q-1)}{d-1}}
              \end{array}
\right)\nonumber\\
&=&q^{\frac{d-1}{2}}(q-1)\sum_{t\in\mathbb{F}_q^{\times}}\phi(t(t-1))\nonumber\\
&\times&{_{d-1}F}_{d-2}\left(\begin{array}{cccccccc}
                       \chi^{\frac{d-1}{2}}, \hspace{-.2cm}& \chi, \hspace{-.2cm}& \ldots, \hspace{-.2cm}& \chi^{\frac{d-3}{2}}, \hspace{-.2cm}
                       & \chi^{\frac{d+1}{2}},
\hspace{-.2cm} & \chi^{\frac{d+3}{2}}, \hspace{-.2cm} & \ldots, \hspace{-.2cm} & \chi^{d-1} \\
                      ~ & \psi, \hspace{-.2cm} & \ldots, \hspace{-.2cm} & \psi^{\frac{d-3}{2}},\hspace{-.2cm} & \varepsilon,
                      \hspace{-.2cm} & \psi^{\frac{d+1}{2}}, \hspace{-.2cm} & \ldots, \hspace{-.2cm}& \psi^{d-2}
\end{array}|\frac{1}{t\alpha}
\right).\nonumber
\end{eqnarray}
Now, substituting the values of $A$ and $B$ in \eqref{eq-32}, and then using \eqref{eq-36} we deduce that
\begin{eqnarray}\label{eq-37}
&&q\cdot N_{\mathbb{F}_q}(Z_\lambda)=q-1+q^{\frac{d-1}{2}}\sum_{t\in\mathbb{F}_q^{\times}}\phi(t(t-1))\nonumber\\
&&\times{_{d-1}F}_{d-2}\left(\begin{array}{cccccccc}
                       \chi^{\frac{d-1}{2}}, \hspace{-.2cm} & \chi, \hspace{-.2cm} & \ldots, \hspace{-.2cm} & \chi^{\frac{d-3}{2}}, \hspace{-.2cm}
                       & \chi^{\frac{d+1}{2}}, \hspace{-.2cm} & \chi^{\frac{d+3}{2}}, \hspace{-.2cm} & \ldots, \hspace{-.2cm} & \chi^{d-1} \\
                      ~ & \psi, \hspace{-.2cm} & \ldots, \hspace{-.2cm} & \psi^{\frac{d-3}{2}}, \hspace{-.2cm} & \varepsilon, \hspace{-.2cm}
                      & \psi^{\frac{d+1}{2}}, \hspace{-.2cm} & \ldots, \hspace{-.2cm} & \psi^{d-2}
\end{array}|\frac{1}{t\alpha}\right).
\end{eqnarray}
Finally, replacing $t$ by $\frac{1}{t}$ in \eqref{eq-37} we derive the required result.
\end{proof}
\begin{proof}[Proof of Theorem \ref{MT-4}]
For $\lambda\in\mathbb{F}_q^{\times}$, we consider
\begin{eqnarray}\label{eq-612}
A_{\lambda}=\sum_{l=0}^{q-2}g(T^{-l})g(T^{-(d-1)l})g(T^{dl})T^{l}\left(\frac{-(d-1)^{d-1}\lambda}{d^d}\right).
\end{eqnarray}
We observe that \eqref{eq-30} and \eqref{eq-612} contain the same Gauss sums.
Therefore, proceeding similarly as shown in the proof of Theorem \ref{point-count2} we deduce that
\begin{eqnarray}\label{eq-613}
A_{\lambda}&=&q^{\frac{d-1}{2}}(q-1)\sum_{t\in\mathbb{F}_q^{\times}}\phi(t(t-1))\nonumber\\
&&\times{_{d-1}F}_{d-2}\left(\begin{array}{cccccccc}
                       \chi^{\frac{d-1}{2}}, \hspace{-.2cm} & \chi, \hspace{-.2cm} & \ldots, \hspace{-.2cm} & \chi^{\frac{d-3}{2}}, \hspace{-.2cm}
                       & \chi^{\frac{d+1}{2}}, \hspace{-.2cm} & \chi^{\frac{d+3}{2}}, \hspace{-.2cm} & \ldots, \hspace{-.2cm} & \chi^{d-1} \\
                      ~ & \psi, \hspace{-.2cm} & \ldots, \hspace{-.2cm} & \psi^{\frac{d-3}{2}}, \hspace{-.2cm} & \varepsilon, \hspace{-.2cm}
                      & \psi^{\frac{d+1}{2}}, \hspace{-.2cm} & \ldots, \hspace{-.2cm} & \psi^{d-2}
\end{array}|\frac{\lambda}{t}\right),
\end{eqnarray}
where $\chi$ and $\psi$ are characters of order $d$ and $d-1$, respectively.
\par
In \cite{BS-RAMA} we express the number of distinct zeros of the polynomial $x^d+ax^i+b$ over a finite field $\mathbb{F}_q$
in terms of Greene's hypergeometric function under the condition that $i|d$ and $q\equiv1\pmod{\frac{d(d-i)}{i^2}}$.
In \cite[Eqn 17]{BS-RAMA}, we consider the following term.
\begin{eqnarray}\label{eq-612-new}
B=\frac{1}{q-1}\sum_{l=0}^{q-2}g(T^{-l})g(T^{\frac{ld}{i}})g(T^{-l(\frac{d}{i}-1)})T^{l}\left(\frac{b^{\frac{d}{i}-1}}{a^{\frac{d}{i}}} \right).
\end{eqnarray}
When $i=1$, the Gauss sums present in \eqref{eq-612} and \eqref{eq-612-new} are the same.
Therefore, proceeding similarly as shown in the proof of \cite[Thm. 1.3]{BS-RAMA} for $i=1$ we deduce that
\begin{eqnarray}\label{eq-614}
A_{\lambda}&=&q-1-\phi(-\lambda)(q-1)+(q-1)q^{\frac{d+1}{2}}\phi(-1)\nonumber\\
&&\times{_dF}_{d-1}\left(\begin{array}{ccccccc}
                   \phi, \hspace{-.2cm} & \chi, \hspace{-.2cm} & \ldots, \hspace{-.2cm} & \chi^{\frac{d-1}{2}}, \hspace{-.2cm}
                   & \chi^{\frac{d+1}{2}}, \hspace{-.2cm} & \ldots, \hspace{-.2cm} & \chi^{d-1} \\
                   ~ & \psi, \hspace{-.2cm} & \ldots, \hspace{-.2cm} & \psi^{\frac{d-1}{2}}, \hspace{-.2cm} & \psi^{\frac{d-1}{2}}, \hspace{-.2cm}
                   & \ldots, \hspace{-.2cm} & \psi^{d-2}
                 \end{array}|\lambda
\right),
\end{eqnarray}
where $\chi$ and $\psi$ are characters of order $d$ and $d-1$, respectively.
Finally, combining \eqref{eq-613} and \eqref{eq-614}, and then replacing $1/t$ by $t$ we deduce the desired summation identity.
This completes the proof of the theorem.
\end{proof}


\begin{thebibliography}{999}

\bibitem{BRS}
R. Barman, H. Rahman and N. Saikia, {\it Counting points on Dwork hypersurfaces and $p$-adic hypergeometric function},
Bull. Aust. Math. Soc., DOI: 10.1017/S0004972715001847.

\bibitem{BS5}
R. Barman, N. Saikia and D. McCarthy, {\it Summation identities and special values of hypergeometric series in the $p$-adic setting},
J. Number Theory 153 (2015), 63--84.

\bibitem{BS-FFA}
R. Barman and N. Saikia, {\it $p$-adic Gamma function and the polynomials $x^d+ax+b$ and $x^d+ax^{d-1}+b$ over $\mathbb{F}_{q}$},
Finite Fields Appl. 29 (2014), 89--105.

\bibitem{BS-RAMA}
R. Barman and N. Saikia, {\it On the polynomials $x^d+ax^i+b$ and $x^d+ax^{d-i}+b$ over $\mathbb{F}_q$
and Gaussian hypergeometric series}, Ramanujan J. 35 (2014), no. 3, 427--441.

\bibitem{evans}
B. Berndt, R. Evans, and K. Williams, {\it Gauss and Jacobi Sums}, Canadian Mathematical Society Series of Monographs and Advanced Texts,
A Wiley-Interscience Publication, John Wiley \& Sons, Inc., New York, 1998.


\bibitem{goodson}
H. Goodson, {\it Hypergeometric functions and relations to Dwork hypersurfaces}, Int. J. Number Theory, DOI: 10.1142/S1793042117500269.

\bibitem{Fuselier} J. Fuselier, \textit{Hypergeometric functions over $\mathbb{F}_p$ and relations to elliptic curve and modular forms},
Proc. Amer. Math. Soc. 138 (2010), 109--123.

\bibitem{Fuselier-McCarthy} J. Fuselier and D. McCarthy, {\it Hypergeometric type identities in the $p$-adic setting and modular forms},
Proc. Amer. Math. Soc. 144 (2016), 1493--1508

\bibitem{greene}
J. Greene, {\it Hypergeometric functions over finite fields}, Trans. Amer. Math. Soc. 301 (1987), no. 1, 77--101.

\bibitem{gross}
B. H. Gross and N. Koblitz, {\it Gauss sum and the $p$-adic $\Gamma$-function}, Annals of Mathematics 109 (1979), 569--581.

\bibitem{ireland}
K. Ireland and M. Rosen, {\it A Classical Introduction to Modern Number Theory}, Springer International Edition, Springer, 2005.

\bibitem{katz} N. M. Katz, {\it Exponential Sums and Differential Equations}, Princeton University Press, Princeton, NJ, 1990.

\bibitem{kob} N. Koblitz, {\it $p$-adic analysis: a short course on recent work}, London Math. Soc. Lecture
Note Series, 46. Cambridge University Press, Cambridge-New York, 1980.

\bibitem {Lang} S. Lang, \textit{Cyclotomic Fields I and II},
Graduate Texts in Mathematics, vol. 121, Springer-Verlag, New York, 1990.




\bibitem{mccarthy3}
D. McCarthy, {\it Extending Gaussian hypergeometric series to the $p$-adic setting}, Int. J. Number Theory 8 (2012), no. 7, 1581--1612.

\bibitem{mccarthy2}
D. McCarthy, {\it The trace of Frobenius of elliptic curves and the $p$-adic gamma function}, Pacific J. Math. 261 (2013), no. 1, 219--236.


\bibitem{salerno} A. Salerno, \textit{Counting points over finite fields and hypergeometric functions},
Funct. Approx. Comment. Math. 49 (2013), no. 1, 137--157.

\end{thebibliography}
\end{document}